\def\cmntsoff{}
\providecommand{\ignore}[1]{}
\newif\ifcmnt
    \providecommand{\aucmnt}[1]{#1}
    \providecommand{\MKcolor}{\color{brown}}
    \providecommand{\aucmnt}[1]{}
    \providecommand{\MKcolor}{}
\definecolor{mygray}{rgb}{0.3,0.3,0.3}
\newcommand{\MKc}[1]{\aucmnt{{\MKcolor[{\color{mygray}MK: #1}]}}}
\newcommand{\beq}{\begin{equation}}
\newcommand{\eeq}{\end{equation}}
\newcommand{\beqp}{\begin{equation*}}
\newcommand{\eeqp}{\end{equation*}}
\newcommand{\bea}{\begin{align}}
\newcommand{\eea}{\end{align}}
\newcommand{\beap}{\begin{align*}}
\newcommand{\eeap}{\end{align*}}
\newtheorem{theorem}{Theorem}
\newtheorem{lemma}[theorem]{Lemma}
\newtheorem{corollary}[theorem]{Corollary}
\newcommand{\kl}{\mathrm{KL}}
\newcommand{\PBR}{\mathrm{PBR}}
\newcommand{\CH}{\mathrm{CH}}
\newcommand{\X}{\mathrm{X}}
\newcommand{\LR}{\mathrm{LR}}
\newcommand{\lE}{\mathrm{lE}}
\newcommand{\htt}{t}
\newcommand{\htheta}{\hat{\theta}}
\newcommand{\Htheta}{\hat{\Theta}}
\newcommand{\Expt}{\mathbb{E}}
\newcommand{\Prob}{\mathbb{P}}
\newcommand{\cH}{\mathcal{H}}
\newcommand{\cB}{\mathcal{B}}
\begin{document}

\title{Performance of Test Supermartingale Confidence Intervals for the Success Probability of Bernoulli Trials}

\author{Peter Wills}
\affiliation{Department of Applied Mathematics, University of Colorado Boulder, Boulder, Colorado 80309, USA}
\author{Emanuel Knill}
\thanks{Corresponding author.}
\affiliation{National Institute of Standards and Technology, Boulder, Colorado 80305, USA}
\affiliation{Center for Theory of Quantum Matter, University of Colorado, Boulder, Colorado 80309, USA}
\author{Kevin Coakley}
\affiliation{National Institute of Standards and Technology, Boulder, Colorado 80305, USA}
\author{Yanbao Zhang}
\affiliation{Institute for Quantum Computing and Department of Physics and Astronomy, 
University of Waterloo, Waterloo, Ontario N2L 3G1, Canada}
\affiliation{NTT Basic Research Laboratories, NTT Corporation, 3-1
Morinosato-Wakamiya, Atsugi, Kanagawa 243-0198, Japan}

\begin{abstract}
  Given a composite null hypothesis $\mathcal{H}_{0}$, test
  supermartingales are non-negative supermartingales with respect to
  $\mathcal{H}_{0}$ with initial value $1$.  Large values of test
  supermartingales provide evidence against $\mathcal{H}_{0}$. As a
  result, test supermartingales are an effective tool for rejecting
  $\mathcal{H}_{0}$, particularly when the $p$-values obtained are
  very small and serve as certificates against the null
  hypothesis. Examples include the rejection of local realism as an
  explanation of Bell test experiments in the foundations of physics
  and the certification of entanglement in quantum information
  science. Test supermartingales have the advantage of being adaptable
  during an experiment and allowing for arbitrary stopping rules. By
  inversion of acceptance regions, they can also be used to determine
  confidence sets.  We use an example to compare the performance of
  test supermartingales for computing $p$-values and confidence
  intervals to Chernoff-Hoeffding bounds and the``exact''
  $p$-value. The example is the problem of inferring the probability
  of success in a sequence of Bernoulli trials.  There is a cost in
  using a technique that has no restriction on stopping rules, and for
  a particular test supermartingale, our study quantifies this cost.
\end{abstract}

\ignore{
Given a composite null hypothesis H, test supermartingales are non-negative supermartingales with respect to H with initial value 1.  Large values of test supermartingales provide evidence against H. As a result, test supermartingales are an effective tool for rejecting H, particularly when the p-values obtained are very small and serve as certificates against the null hypothesis. Examples include the rejection of local realism as an explanation of Bell test experiments in the foundations of physics and the certification of entanglement in quantum information science. Test supermartingales have the advantage of being adaptable during an experiment and allowing for arbitrary stopping rules. By inversion of acceptance regions, they can also be used to determine confidence sets.  We use an example to compare the performance of test supermartingales for computing p-values and confidence intervals to Chernoff-Hoeffding bounds and the``exact'' p-value. The example is the problem of inferring the probability of success in a sequence of Bernoulli trials.  There is a cost in using a technique that has no restriction on stopping rules, and for a particular test supermartingale, our study quantifies this cost.
}

\maketitle
\tableofcontents

\section{Introduction}
\label{s:intro}

Experiments in physics require very high confidence to justify claims
of discovery or to unambiguously exclude alternative
explanations~\cite{dorigo2015extraordinary}. Particularly striking
examples in the foundations of physics are experiments to demonstrate
that theories based on local hidden variables, called local realist
(LR) theories, cannot explain the statistics observed in quantum
experiments called Bell tests. See Ref.~\cite{genovese2005research}
for a review and Refs.~\cite{hensen:2015, shalm:2015,
  giustina:2015,rosenfeld:2016} for the most definitive experiments to
date. Successful Bell tests imply the presence of some randomness in
the observed statistics. As a result, one of the most notable
applications of Bell tests is to randomness
generation~\cite{acin:qc2016a}. In this application, it is necessary
to certify the randomness generated, and these certificates are
equivalent to extremely small significance levels in an appropriately
formulated hypothesis test. In general, such extreme significance
levels are frequently required in protocols for communication or
computation to ensure performance.

Bell tests consist of a sequence of ``trials'', each of which gives a
result $M_{i}$.  LR models restrict the statistics of the $M_{i}$ and
therefore constitute a composite null hypothesis to be
rejected. Traditionally, data has been analyzed by estimating the
value of a Bell function $\hat B$ and its standard error $\hat\sigma$
from the collective result statistics
(see~\cite{PRA84,larsson:qc2014a}). Under the null hypothesis, $\hat
B$ is expected to be negative, so a large value of $\hat B$ compared
to $\hat\sigma$ is considered strong evidence against the null
hypothesis. This method suffers from several problems, including the
failure of the Gaussian approximation in the extreme tails and the
fact that the trials are observably not independent and identically
distributed (i.i.d.)~\cite{PRA84}.

In Ref.~\cite{PRA84} a method was introduced that can give rigorous
$p$-value bounds against LR. These $p$-value bounds are memory-robust,
that is, without any assumptions on dependence of trial statistics on
earlier trials.  The method can be seen as an application of test
supermartingales as defined in Ref.~\cite{Shafer}. Test
supermartingales were first considered, and many of their basic
properties were proved, by Ville~\cite{ville:1939a} in the same work
that introduced the notion of martingales. The method involves
constructing a non-negative stochastic process $V_{i}$ determined by
$(M_{j})_{j\leq i}$ such that the initial value is $V_0=1$ and, under
LR models, the expectations conditional on all past events are
non-increasing.  As explained further below, the final value of
$V_{i}$ in a sequence of $n$ trials has expectation bounded by $1$, so
its inverse $p=1/V_{n}$ is a $p$-value bound according to Markov's
inequality. A large observed value of such a test supermartingale thus
provides evidence against LR models.  Refs.~\cite{PRA84,PRA88} give
methods to construct $V_{i}$ that achieve asymptotically optimal gain
rate $\Expt(-\log(p)/n)$ for i.i.d.~trials, where $\Expt(\ldots)$ is
the expectation functional. This is typically an improvement over
other valid memory-robust Bell tests. Additional benefits are that
$V_{i}$ can be constructed adaptively based on the observed
statistics, and the $p$-value bounds remain valid even if the
experiment is stopped based on the current value of $V_{i}$.  These
techniques were successfully applied to experimental data from a Bell
test with photons where other methods
fail~\cite{Brad2015}.

Although the terminology is apparently relatively recent, test
supermartingales have traditionally played a major theoretical
role. Carefully constructed test supermartingales contribute to the
asymptotic analysis of distributions and proofs of large deviation
bounds.  They can be constructed for any convex-closed null hypothesis
viewed as a set of distributions, so they can be used for memory-
and stopping-robust adaptive hypothesis tests in some generality. The
application to Bell tests shows that at least in a regime where high
significance results are required, test supermartingales can perform
well or better than other methods. Here we compare the performance of
test supermartingales directly to (1) the standard large deviation
bounds based on the Chernoff-Hoeffding
inequality~\cite{Chernoff,Hoeffding}, and (2) ``exact'' $p$-value
calculations. Our comparison is for a case where all calculations can
be performed efficiently, namely for testing the success probability
in Bernoulli trials.  The three $p$-value bounds thus obtained have
asymptotically optimal gain rates.  Not surprisingly, for any given
experiment, test supermartingales yield systematically worse $p$-value
bounds, but the difference is much smaller than the
experiment-to-experiment variation. This effect can be viewed as the
cost of robustness against arbitrary stopping rules.  For ease of
calculation, we do not use an optimal test supermartingale
construction, but we expect similar results no matter which test
supermartingale is used.

Any hypothesis test parametrized by $\phi$ can be used to construct
confidence regions for $\phi$ by acceptance region inversion (see
Ref.~\cite{Shao}, Sect. 7.1.2).  Motivated by this observation, we
consider the use of test supermartingales for determining confidence
regions. We expect that they perform well in the high-confidence
regime, with an increase in region size associated with robustness
against stopping rules. We therefore compared the methods mentioned
above for determining confidence intervals for the success probability
in Bernoulli trials. After normalizing the difference between
the interval endpoints and the
success probability by the standard deviation, which is
$O(1/\sqrt{n})$, we find that while large deviation bounds and exact
regions differ by a constant at fixed confidence levels, the test
supermartingale's normalized endpoint deviation is
$\Omega(\sqrt{\log(n)})$ instead of $O(1)$. This effect was noted in
Ref.~\cite{Shafer} and partially reflects a suboptimal choice of
supermartingale. To maintain robustness against stopping rule, one
expects $\Omega(\sqrt{\log\log(n)})$ according to the law of the
iterated logarithm.  However, we note that if the
number of trials $n$ is fixed in advance, the normalized endpoint
deviation can be reduced to $O(1)$ with an adaptive test
supermartingale. So although the increase in confidence region
necessitated by stopping rule robustness is not so large for
reasonably sized $n$, when $n$ is known ahead of time it can, in
principle, be avoided without losing the ability to adapt the test
supermartingale on the fly during the experiment in
non-i.i.d.~situations.

The remainder of the paper is structured as follows. We establish the
notation to be used and define the basic concepts in
Sect.~\ref{s:basic}. Here we also explain how adaptivity can help
  reject hypotheses for stochastic processes.  We introduce the three
methods to be applied to Bernoulli trials in Sect.~\ref{s:bht}. Here
we also establish the basic monotonicity properties and relationships
of the three $p$-value bounds obtained.  In Sect.~\ref{s:pvalcomp} we
determine the behavior of the $p$-value bounds in detail, including
their asymptotic behavior.  In Sect.~\ref{s:cicomp} we introduce the
confidence intervals obtained by acceptance region inversion. We focus
on one-sided intervals determined by lower bounds but note that the
results apply to two-sided intervals. The observations in
Sects.~\ref{s:pvalcomp} and~\ref{s:cicomp} are based on theorems whose
proofs can be found in the Appendix. While many of the observations in
these sections can ignore asymptotically small terms, the results in
the Appendix uncompromisingly determine interval bounds for all
relevant expressions, with explicit constants. Concluding remarks can
be found in Sect.~\ref{s:conclusion}.

\section{Basic Concepts}
\label{s:basic}

We use the usual conventions for random variables (RVs) and their
values.  RVs are denoted by capital letters such as $X,Y,\ldots$ and
their values by the corresponding lower case letters $x,y,\ldots$.
All our RVs are finite valued.  Probabilities and expectations are
denoted by $\Prob(\ldots)$ and $\Expt(\ldots)$, respectively. For a
formula $\phi$, the expression $\{\phi\}$ refers to the event where
the formula is true.  The notation $\mu(X)$ refers to the distribution
of $X$ induced on its space of values.  We use the usual conventions
for conditional probabilities and expectations.  Also, $\mu(X|\phi)$
denotes the probability distribution induced by $X$ conditional on the
event $\{\phi\}$.  

We consider stochastic sequences of RVs such as
$\mathbf{X}=(X_{i})_{i=1}^{n}$ and $\mathbf{X}_{\leq
  k}=(X_{i})_{i=1}^{k}$.  We think of the $X_{i}$ as the outcomes from
a sequence of trials. For our study, we consider
  $\mathbf{B}=(B_{i})_{i=1}^{n}$, where the $B_{i}$ are
  $\{0,1\}$-valued RVs. The standard $\{0,1\}$-valued RV with
  parameter $\theta$ is the Bernoulli RV $B$ satisfying
  $\Expt(B)=\theta$. The parameter $\theta$ is also referred to as the
  success probability.  We denote the distribution of $B$ by
$\nu_{\theta}$.  We define $S_{k}=\sum_{i=1}^{k}B_{i}$ and
$\Htheta_{k}=S_{k}/k$.  We extend the RV conventions to the Greek
letter $\Htheta_{k}$.  That is,
$\htheta_{k}=s_{k}/k=\sum_{i=1}^{k}b_{i}/k$ is the value of the RV
$\Htheta_{k}$ determined by the values $b_{i}$ of $B_{i}$.  We may
omit subscripts on statistics such as $S_{n}$ and $\Htheta_{n}$ when
they are based on the full set of $n$ samples. Some expressions
involving $\Htheta_{n}$ require that $n\Htheta_{n}$ is an integer,
which is assured by the definition.

A null hypothesis for $X$ is equivalent to a set $\cH_{0}$ of
distributions of $X$, which we refer to as the ``null''.  For our
study of Bernoulli RVs, we consider the nulls
\begin{equation}
  \cB_{\varphi}=\{\nu_{\theta}|\theta\leq\varphi\}
\end{equation}
parametrized by $0\leq \varphi\leq 1$.  the set of distributions of
Bernoulli RVs with $\Prob(B=1)\leq\varphi$.  One can test the null
hypothesis determined by a null by means of special statistics called
$p$-value bounds. A statistic $P=P(X)\geq 0$ is a $p$-value bound for
$\cH_{0}$ if for all $\mu\in \cH_{0}$ and $p\geq 0$,
$\Prob_{\mu}(P\leq p)\leq p$.  Here, the subscript $\mu$ on
$\Prob_{\mu}(\ldots)$ indicates the distribution with respect to which
the probabilities are to be calculated.  We usually just write
``$p$-value'' instead of ``$p$-value bound'', even when the bounds are
not achieved by a member of $\cH_{0}$. Small $p$-values are strong
evidence against the null. Since we are interested in very small
$p$-values, we preferentially use their negative logarithm $-\log(P)$
and call this the $\log(p)$-value. In this work, logarithms are base
$e$ by default.

A general method for constructing $p$-values is to start with an
arbitrary real-valued RV $Q$ jointly distributed with $X$.  Usually
$Q$ is a function of $X$. Define the worst-case tail probability of
$Q$ as $P(q)=\sup_{\mu\in\cH_{0}}\Prob_{\mu}(Q\geq q)$. Then
$P(Q)$ is a $p$-value for $\cH_{0}$.  The argument is standard.
Define $F_{\mu}(q)=\Prob_{\mu}(Q\geq q)$.  The function
$F_{\mu}$ is non-increasing. We need to show that for all
$\mu\in\cH_{0}$, $\Prob_{\mu}(P(Q)\leq p)\leq p$.  Since
$F_{\mu}(q)\leq P(q)$, we have $\Prob_{\mu}(P(Q)\leq p) \leq
\Prob_{\mu}(F_{\mu}(Q)\leq p)$.  The set
$\{q:F_{\mu}(q)\leq p\}$ is either of the form $[q_{\min},\infty)$
or $(q_{\min},\infty)$ for some $q_{\min}$. In the first case,
$\Prob_{\mu}(F_{\mu}(Q)\leq p) = \Prob_{\mu}(Q\geq
q_{\min}) = F_{\mu}(q_{\min})\leq p$.  In the second,
$\Prob_{\mu}(F_{\mu}(Q)\leq p) =
\Prob_{\mu}\left(\bigcup_{n}\{q:q\geq q_{\min}+1/n\}\right) =
\lim_{n}\Prob_{\mu}(\{q:q\geq
q_{\min}+1/n\})=\lim_{n}\Prob_{\mu}(F_{\mu}(Q)\leq
q_{\min}+1/n) \leq p$, with $\sigma$-additivity applied to the
countable monotone union.

When referring to $\cH_{0}$ as a null for $\mathbf{X}$, we mean that
$\cH_{0}$ consists of the distributions where the $X_{i}$ are
i.i.d., with $X_{i}$ distributed according to $\mu$ for some
fixed $\mu$ independent of $i$.  To go beyond i.i.d., we extend
$\cH_{0}$ to the set of distributions of $\mathbf{X}$ that have the
property that for all $\mathbf{x}_{\leq i-1}$,
$\mu(X_{i}|\mathbf{X}_{\leq i-1}=\mathbf{x}_{\leq i-1}) =\mu_{i}$
for some $\mu_{i}\in \cH_{0}$, where $\mu_{i}$ depends on $i$ and
$\mathbf{x}_{\leq i-1}$. We denote the extended null by $\overline{\cH_{0}}$.
In particular,
\begin{equation}
  \overline{\cB_{\varphi}}=\{\mu:\textrm{for all $i$ and $\mathbf{b}_{\leq i-1}$, $\mu(B_{i}|\mathbf{B}_{\leq i-1}=\mathbf{b}_{\leq i-1})
    =\nu_{\theta}$ for some $\theta\leq\varphi$}\}.
\end{equation}

The LR models mentioned in the introduction constitute a particular
null $\cH_{\LR}$ for sequences of trials called Bell tests. In
Ref.~\cite{PRA84}, a technique called the probability-based ratio
(PBR) method was introduced to construct $p$-values $P_{n}$ that
achieve asymptotically optimal gain rates defined as
$\Expt(\log(1/P_{n}))/n$. The method is best understood as a way of
constructing a test supermartingale for $\cH_{\LR}$.  A test
supermartingale of $\mathbf{X}$ for $\cH_{0}$ is a stochastic sequence
$\mathbf{T}=(T_{i})_{i=0}^{n}$ where $T_{i}$ is a function of
$\mathbf{X}_{\leq i}$, $T_{0}=1$, $T_{i}\geq 0$ and for all
$\mu\in\cH_{0}$, $\Expt_{\mu}(T_{i+1}|\mathbf{X}_{\leq i})\leq
T_{i}$. In this work, to avoid unwanted boundary cases, we further
require $T_{i}$ to be positive. The definition of test supermartingale
used here is not the most general one because we consider only
discrete time and avoid the customary increasing sequence of
$\sigma$-algebra by making it dependent on an explicit stochastic
sequence $\mathbf{X}$.  Every test supermartingale defines a $p$-value
by $P_{n}=1/T_{n}$. This follows from $\Expt(T_{n})\leq T_{0}=1$ (one
of the characteristic properties of supermartingales) and Markov's
inequality for non-negative statistics, according to which
$\Prob(T_{n}\geq \kappa)\leq \Expt(T_{n})/\kappa\leq 1/\kappa$. From
martingale theory, the stopped process $T_{\tau}$ for any stopping
rule $\tau$ with respect to $\mathbf{X}$ also defines a $p$-value by
$P=1/T_{\tau}$.  Further, $P^{*}_{n}=1/\max_{i=1}^{n}T_{i}$ also
defines a $p$-value. See Ref.~\cite{Shafer} for a discussion and
examples.

A test supermartingale $\mathbf{T}$ can be viewed as the running
product of the $F_{i}=T_{i}/T_{i-1}$, which we call the test factors
of $\mathbf{T}$.  The defining properties of $\mathbf{T}$ are
equivalent to having $F_{i}>0$ and $\Expt(F_{i}|\mathbf{X}_{\leq
  i-1})\leq 1$ for all distributions in the null, for all $i$. The PBR
method adaptively constructs $F_{i}$ as a function of the next trial
outcome $X_{i}$ from the earlier trial outcomes $\mathbf{X}_{\leq
  i-1}$.  The method is designed for testing $\overline{\cH_{0}}$ for
a closed convex null $\cH_{0}$, where asymptotically optimally
gain rates are achieved when the trials are i.i.d.~with a trial
distribution $\nu$ not in $\cH_{0}$. If $\nu$ were known, the optimal
test factor would be given by $x\mapsto \nu(x)/\mu(x)$, where
$\mu\in\cH_{0}$ is the distribution in $\cH_{0}$ closest to $\nu$ in
Kullback-Leibler (KL) divergence
$\kl(\nu|\mu)=\sum_{x}\nu(x)\log(\nu(x)/\mu(x))$~\cite{KL1951}. Since
$\nu$ is not known, the PBR method obtains an empirical estimate
$\hat\nu$ of $\nu$ from $\mathbf{x}_{\leq i-1}$ and other information
available before the $i$'th trial. It then determines the KL-closest
$\mu\in\cH_{0}$ to $\hat\nu$. The test factor $F_{i}$ is then given by
$F_{i}(x)=\hat\nu(x)/\mu(x)$. The test factors satisfy
$\Expt_{\mu'}(F_{i})\leq 1$ for all $\mu'\in\cH_{0}$, see
Ref.~\cite{PRA84} for a proof and applications to the problem of
testing LR.

The ability to choose test factors adaptively helps reject extended
nulls when the distributions vary as the experiment progresses, both
when the distributions are still independent (so only the parameters
vary) and when the parameters depend on past outcomes. Suppose that
the distributions are sufficiently stable so that the empirical
frequencies over the past $k$ trials are statistically close to the
next trial's probability distribution. Then we can adaptively compute
the test factor to be used for the next trial from the past $k$
trials' empirical frequencies, for example by following the strategy
outlined in the previous paragraph. The procedure now has an
opportunity to reject an extended null provided only that there is a
sufficiently long period where the original null does not hold. For
example, consider the extended null $\overline{\cB_{\varphi}}$. The
true success probabilities $\theta_{i}$ at the $i$'th trial may vary,
maybe as a result of changes in experimental parameters that need to
be calibrated. Suppose that the goal is to calibrate for
$\theta_{i}>\varphi$. If we use adaptive test factors and find at some
point that we cannot reject $\overline{\cB_{\varphi}}$ according to
the running product of the test factors, we can recalibrate during the
experiment. If the the recalibration succeeds at pushing $\theta_{i}$
above $\varphi$ for the remaining trials, we may still reject the
extended null by the end of the experiment.  In many cases, the
analysis is performed after the experiment, or it may not be possible
to stop the experiment for recalibration. For this situation, if the
frequencies for a run of $k$ trials clearly show that
$\theta_{i}<\varphi$, the adaptive test factors chosen would tend to
be trivial (equal to $1$), in which case the next trials do not
contribute to the final test factor product.  This is in contrast to a
hypothesis test based on the final sum of the outcomes for which all
trials contribute equally.

Let $\varphi$ be a parameter of distributions of $X$. Here, $\varphi$
need not determine the distributions.  There is a close relationship
between methods for determining confidence sets for $\varphi$ and
hypothesis tests. Let $\cH_{\varphi}$ be a null such that for all
distributions $\mu$ with parameter $\varphi$,
$\mu\in\cH_{\varphi}$. Given a family of hypothesis tests with
$p$-values $P_{\varphi}$ for $\cH_{\varphi}$, we can construct
confidence sets for $\varphi$ by inverting the acceptance region of
$P_{\varphi}$, see Ref.~\cite{Shao}, Sect. 7.1.2. According to this
construction, the confidence set $C_{a}$ at level $a$ is given by
$\{\varphi|P_{\varphi}(X)\geq a\}$ and is a random quantity.  The
defining property of a level $a$ confidence set is that its coverage
probability satisfies $\Prob_{\mu}(\varphi\in C_{a})\geq 1-a$ for all
distributions $\mu\in\cH_{\varphi}$.  When we use this construction
for sequences $\mathbf{B}$ of i.i.d.~Bernoulli RVs with the null
$\cB_{\varphi}$, we obtain one-sided confidence intervals of the form
$[\varphi_{0},1]$ for $\theta=\Expt(B_{i})$. When the confidence set
is a one-sided interval of this type, we refer to $\varphi_{0}$ as the
confidence lower bound or endpoint.  If $\mathbf{B}$ has a
distribution $\mu$ that is not necessarily i.i.d., we can define
$\Theta_{\max}=\max_{i\leq n}\Expt_{\mu}(B_{i}|\mathbf{B}_{\leq
  i-1})$.  If we use acceptance region inversion with the extended
null $\overline{\cB_{\varphi}}$, we obtain a confidence region for
$\Theta_{\max}$. Note that $\Theta_{\max}$ is an RV whose value is
covered by the confidence set with probability at least $1-a$.  The
confidence set need not be an interval in general, but including
everything between its infimum and its supremum increases the coverage
probability, so the set can be converted into an interval if desired.

While our focus is on one-sided confidence intervals, our observations
immediately apply to two-sided intervals ones with a standard method
of obtaining a two-sided confidence interval from two one-sided
intervals. For our example, we can obtain confidence upper bounds at
level $a$ by symmetry, for example by relabeling the Bernoulli
outcomes $0\mapsto 1$ and $1\mapsto 0$.  To obtain a two-sided
interval at level $a$, we compute lower and upper bounds at level
$a/2$. The two-sided interval is the interval between the bounds.  The
coverage probability of the two-sided interval is valid according to
the union bound applied to maximum non-coverage probabilities of the
two one-sided intervals.

\section{Bernoulli Hypothesis Tests}
\label{s:bht}

We compare three hypothesis tests for the nulls $\cB_{\varphi}$ or the
extended nulls $\overline{\cB_{\varphi}}$: The ``exact'' test with
$p$-value $P_{\X}$, the Chernoff-Hoeffding test with $p$-value
$P_{\CH}$ and a PBR test with $p$-value $P_{\PBR}$.  In discussing
properties of these tests with respect to the hypothesis parameter
$\varphi$, the true success probability $\theta$ and the empirical
success probability $\Htheta$, we generally assume that these
parameters are in the interior of their range.  In particular,
$0<\varphi<1$, $0<\theta<1$, and $0<\Htheta<1$.  When discussing
purely functional properties with respect to values $\htheta$ of
$\Htheta$, we use the variable $\htt$ instead of $\htheta$. By default
$n\htt$ is a positive integer.

The $p$-value for the exact test is obtained from the tail for
i.i.d.~Bernoulli RVs:
\begin{equation}
 P_{\X,n}(\Htheta|\varphi) = \sum_{k\geq \Htheta n}\binom{n}{k}\varphi^{k}(1-\varphi)^{n-k},
\end{equation}
where $\Htheta=S_{n}/n=\sum_{i=1}^{n}B_{i}/n$ as defined in
Sect.~\ref{s:basic}. Note that unlike the other $p$-values we
consider, $P_{\X,n}$ is not just a $p$-value bound. It is achieved by
a member of the null.  The quantity $P_{\X,n}(\htt|\varphi)$ is decreasing as a
function of $\htt$, given $0<\varphi<1$. It is smooth and
monotonically increasing as a function of $\varphi$, given $t>0$.  To
see this, compute
\begin{align}
  \frac{d}{d\varphi}P_{\X,n}(\htt|\varphi) &= \sum_{i=n\htt}^n \varphi^i(1-\varphi)^{n-i}\binom{n}{i}\left(\frac{i}{\varphi}-\frac{n-i}{1-\varphi}\right)\nonumber\\
  &= n\sum_{i=n\htt}^n \varphi^{i-1}(1-\varphi)^{n-i}\binom{n-1}{i-1}-n\sum_{i=n\htt}^{n-1} \varphi^{i}(1-\varphi)^{n-i-1}\binom{n-1}{i}\nonumber\\
  &= n \left(\sum_{i=n\htt-1}^{n-1} \varphi^{i}(1-\varphi)^{n-1-i}\binom{n-1}{i}-\sum_{i=nt}^{n-1} \varphi^{i}(1-\varphi)^{n-1-i}\binom{n-1}{i}\right)\nonumber\\
  &= n\,\varphi^{n\htt-1}(1-\varphi)^{n(1-\htt)}\binom{n-1}{n\htt-1}.
\end{align}
This is positive for $\varphi\in(0,1)$.  The probability that
$S_{n}\geq \htt n$, given that all $B_{i}$ are distributed as
$\nu_{\theta}$ with $\theta\leq\varphi$, is bounded by
$P_{\X,n}(\htt|\theta)\leq P_{\X,n}(\htt|\varphi)$.  That $P_{\X}$ is
a $p$-value for the case where the null is restricted to
i.i.d.~distributions now follows from the standard construction of
$p$-values from worst-case (over the null) tails of
statistics (here $S_{n}$) as explained in the previous section.  That
$P_{\X}$ is a $p$-value for the extended null
$\overline{\cB_{\varphi}}$ follows from the observations that the tail
probabilities of $S_{n}$ are linear functions of the distribution
parameters $\theta_1, \theta_2, ...,\theta_n$ where $\theta_i\leq
\varphi, i=1,2,...,n$, the extremal distributions in
$\overline{\cB_{\varphi}}$ have $B_{i}$ independent with
$\Prob(B_{i}=1)=\theta_{i}\leq\varphi$, and the tail probabilities of
$S_{n}$ are monotonically increasing in $\Prob(B_{i}=1)$ for each $i$
separately. See also Ref.~\cite{bierhorst:qc2015a}, App. C.

Define $\Htheta_{\max}=\max(\Htheta,\varphi)$.
The $p$-value for the Chernoff-Hoeffding test is the optimal
Chernoff-Hoeffding bound~\cite{Chernoff,Hoeffding} for a binary 
random variable given by
\begin{align}
  P_{\CH,n}(\Htheta|\varphi) &= \left(\frac{\varphi}{\Theta_{\max}}\right)^{n\Theta_{\max}}\left(\frac{1-\varphi}{1-\Theta_{\max}}\right)^{n(1-\Theta_{\max})}\notag\\
  &= \left\{ \begin{array}{ll}
      \left(\frac{\varphi}{\Htheta}\right)^{n\Htheta}\left(\frac{1-\varphi}{1-\Htheta}\right)^{n(1-\Htheta)} & \textrm{if $\Htheta\geq\varphi$,}\\
      1 &\textrm{otherwise.}
    \end{array}
  \right.
\end{align}
This is a $p$-value for our setting because
$P_{\CH,n}(\htt|\varphi)\geq P_{\X,n}(\htt|\varphi)$, see
Ref.~\cite{Hoeffding}. For $\varphi\leq \htt$, we have
$-\log(P_{\CH,n}(\htt|\varphi))=n\kl(\nu_{\htt}|\nu_{\varphi})$.  We
abbreviate $\kl(\nu_{\htt}|\nu_{\varphi})$ by $\kl(\htt|\varphi)$. 
For $\varphi\leq\htt<1$, $P_{\CH,n}(\htt|\varphi)$ is
monotonically increasing in $\varphi$, and decreasing in $\htt$.  For
$0\leq\htt\leq\varphi$, it is constant.

\MKc{Permanent comment to verify the monotonicity stated: From
  $\kl(\htt|\varphi)=\htt\log\htt/\varphi)+(1-\htt)\log((1-\htt)/(1-\varphi))$,
  for $\varphi<\htt$
  \begin{align*}
   \frac{d}{d\varphi}\kl(\htt|\varphi) &=
   -\frac{\htt}{\varphi}+\frac{1-\htt}{1-\varphi}\\
  &= \frac{-\htt(1-\varphi)+(1-\htt)\varphi}{\varphi(1-\varphi)}\\
  &= \frac{\varphi-\htt}{\varphi(1-\varphi)}\\
  &<0
   \\
   \frac{d}{d\htt}\kl(\htt|\varphi)&=
   \log\left(\frac{\htt}{\varphi}\right)-\log\left(\frac{1-\htt}{1-\varphi}\right)\\
   &=\log\left(\frac{\htt}{1-\htt}\middle/\frac{\varphi}{1-\varphi}\right)\\
   &>\log(1)=0.
  \end{align*}
}

The $p$-value for the PBR test that we use for comparison is
constructed from a $p$-value for the point null $\{\nu_{\varphi}\}$
defined as
\begin{align}
  P^{0}_{\PBR,n}(\Htheta|\varphi) &= \varphi^{n\Htheta}(1-\varphi)^{n(1-\Htheta)}(n+1)\binom{n}{n\Htheta}.
\end{align}
The PBR test's $p$-value for $\overline{\cB_{\varphi}}$ is
\begin{align}
  P_{\PBR,n}(\Htheta|\varphi) 
  &= \max_{0\leq\varphi'\leq\varphi} P^{0}_{\PBR,n}(\Htheta|\varphi').
  \label{e:pbrmaindef}
\end{align}
That $P_{\PBR}$ is a $p$-value for $\overline{\cB_{\varphi}}$ is
shown below.  As a function of $\varphi$,
$P^{0}_{\PBR,n}(\htt|\varphi)$ has an isolated maximum at
$\varphi=\htt$. This can be seen by differentiating
$\log\left(\varphi^{\htt}(1-\varphi)^{1-\htt}\right)=\htt\log(\varphi)+(1-\htt)\log(1-\varphi)$.
Thus in Eq.~\ref{e:pbrmaindef} when
$\varphi\geq\Htheta$, the maximum is achieved by $\varphi'=\Htheta$.
We can therefore write Eq.~\ref{e:pbrmaindef} as
\begin{align}
  P_{\PBR,n}(\Htheta|\varphi) &= 
  \left\{ \begin{array}{ll}
      P^{0}_{\PBR,n}(\Htheta|\varphi) & \textrm{if $\Htheta\geq\varphi$,}\\
      P^{0}_{\PBR,n}(\Htheta|\Htheta) &\textrm{otherwise.}
    \end{array}
  \right.
\end{align}
By definition, $P_{\PBR,n}(\htt|\varphi)$ is non-decreasing in
$\varphi$ and strictly increasing for $\varphi\leq\htt$. As a
function of $\htt$, it is strictly decreasing for $\htt\geq\varphi$
(integer-valued $n\htt$). To see this, consider $k=n\htt\geq n\varphi$
and compute the ratio of successive values as follows:
\begin{align}
  P^{0}_{\PBR,n}((k+1)/n|\varphi)/P^{0}_{\PBR,n}(k/n|\varphi)
  &= \frac{\varphi}{1-\varphi}\frac{n-k}{k+1}\notag\\
  &= \frac{\varphi}{1-\varphi}\frac{1-\htt}{\htt+1/n}\notag\\
  &\leq \frac{\varphi}{1-\varphi}\frac{1-\htt}{\htt}\notag\\
  &\leq 1.\label{e:decreasingbinom}
\end{align}

The expression for $P^{0}_{\PBR,n}$ is the final value of a test
supermartingale obtained by constructing test factors $F_{k+1}$ from
$S_{k}$.  Define
\begin{equation}
  \widetilde\Theta_k = \frac{1}{k+2}\left(S_{k} +
    1\right).
\end{equation}
Thus, $\widetilde\Theta_{k}$ would be an empirical estimate of
$\theta$ if there were two initial trials $B_{-1}$ and $B_{0}$ with
values $0$ and $1$, respectively. The test factors are given by
\begin{equation}
  F_{k+1}(B_{k+1}) = \left(\frac{\widetilde{\Theta}_{k}}{\varphi}\right)^{B_{k+1}}
  \left(\frac{1-\widetilde{\Theta}_{k}}{1-\varphi}\right)^{1-B_{k+1}}.
  \label{eq:pbr_testfacts}
\end{equation}
One can verify that $\Expt_{\nu_{\theta}}(F_{k+1})= 1$ for
$\theta=\varphi$.  More generally, set $\delta=\theta-\varphi$ and
compute
\begin{align}
  \Expt_{\nu_{\theta}}(F_{k+1}|\widetilde{\Theta}_{k}=t) &= \theta\frac{t}{\varphi}
  + (1-\theta)\frac{1-t}{1-\varphi}\notag\\
  &= 1 + \delta\left(\frac{t}{\varphi}-\frac{1-t}{1-\varphi}\right)\notag\\
  &= 1 + \delta\frac{t-\varphi}{\varphi(1-\varphi)}\label{eq:fk_isonesided}.
\end{align}
As designed, $T_{n}=\prod_{k=1}^{n}F_{k}$ is a test supermartingale
for the point null $\{\nu_{\varphi}\}$. Thm.~\ref{t:pbrexpression} in
App.~\ref{a:pbrexpression}, establishes that
$T_{n}=1/P^{0}_{\PBR,n}(\Htheta|\varphi)$.  The definition of
$P_{\PBR,n}(\Htheta|\varphi)$ as a maximum of $p$-values for
$\nu_{\varphi'}$ with $\varphi'\leq\varphi$ in Eq.~\ref{e:pbrmaindef}
ensures that $P_{\PBR,n}(\Htheta|\varphi)$ is a $p$-value for
$\cB_{\varphi}$.

To show that $P_{\PBR}$ is a $p$-value for $\overline{\cB_{\varphi}}$,
we establish that for all $\htt$ (integer-valued $n\htt$),
$P_{\PBR,n}(\htt|\varphi)\geq P_{\CH,n}(\htt|\varphi)$.  By direct
calculation for both $\varphi\leq\htt$ and $\htt\leq\varphi$, we
have
\begin{equation}
P_{\PBR,n}(\htt|\varphi)/P_{\CH,n}(\htt|\varphi) =  \htt^{n\htt}(1-\htt)^{n(1-\htt)}(n+1)\binom{n}{n\htt}.
\end{equation}
The expression $\htt^{k}(1-\htt)^{k}\binom{n}{k}$ is maximized at
$k=n\htt$ as can be seen by considering ratios for successive values
of $k$ and the calculation in Eq.~\ref{e:decreasingbinom}, now
applied also for $k<n\htt$. Therefore,
\begin{equation}
  \htt^{n\htt}(1-\htt)^{n(1-\htt)}(n+1)\binom{n}{n\htt} =
  \sum_{k=0}^{n}\htt^{n\htt}(1-\htt)^{n(1-\htt)}\binom{n}{n\htt}\geq
  \sum_{k=0}^{n}\htt^{k}(1-\htt)^{k}\binom{n}{k} = 1.
\end{equation}

A better choice for test factors to construct a test supermartingale
to test $\overline{\cB_{\varphi}}$ would be
\begin{equation}
  T'_{k+1}=\left\{\begin{array}{ll}
      T_{k+1}& \textrm{if $\widetilde{\Theta}_{k}\geq\varphi$,}\\
      1 & \textrm{otherwise.}
    \end{array}\right.
\end{equation}
This choice ensures that $\Expt_{\nu_{\theta}}(F_{k+1}|\mathbf{B}_{\le
  k})\leq 1$ for all $\theta\leq\varphi$ but the final value of the test
supermartingale obtained by multiplying these test factors is not
determined by $S_{n}$, which would complicate our study.

We summarize the observations about the three tests in the following theorem.

\begin{theorem}\label{t:pproperties}
  We have
  \begin{equation}
    P_{\X}\leq P_{\CH}\leq P_{\PBR}.
  \end{equation}
  The three tests satisfy the following monotonicity properties for $0<\varphi<1$
  and $0<\htt<1$ with $n\htt$ integer-valued:
  \begin{itemize}
  \item[] 
    $P_{\X}(\htt|\varphi)$ is strictly increasing in $\varphi$
    and strictly decreasing as a function of $\htt$.
  \item[] 
    $P_{\CH}(\htt|\varphi)$ is strictly increasing in $\varphi$ for
    $\varphi\leq\htt$, constant in $\varphi$ for $\varphi\geq\htt$,
    strictly decreasing in $\htt$ for $\htt\geq\varphi$ and
    constant in $\htt$ for $\htt\leq\varphi$.
  \item[] $P_{\PBR}(\htt|\varphi)$ is strictly increasing in
    $\varphi$ for $\varphi\leq\htt$, constant in $\varphi$ for
    $\varphi\geq\htt$ and strictly decreasing in $\htt$ for
    $\htt\geq\varphi$.
  \end{itemize}
\end{theorem}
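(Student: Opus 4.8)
The plan is to assemble the three inequalities and the monotonicity statements from the computations already carried out in this section, organizing them so that every piecewise region is treated explicitly. For the chain $P_{\X}\leq P_{\CH}\leq P_{\PBR}$, the first inequality is simply the optimal Chernoff-Hoeffding tail bound: $P_{\CH,n}(\htt|\varphi)$ is by construction an upper bound on the exact binomial tail $P_{\X,n}(\htt|\varphi)$. For the second inequality I would form the ratio $P_{\PBR,n}(\htt|\varphi)/P_{\CH,n}(\htt|\varphi)$ and reduce it, in both regimes $\varphi\leq\htt$ and $\htt\leq\varphi$, to the single quantity $\htt^{n\htt}(1-\htt)^{n(1-\htt)}(n+1)\binom{n}{n\htt}$. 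The key step is to recognize this as $n+1$ copies of the largest term of the binomial distribution with parameter $\htt$, which therefore dominates term by term the full sum $\sum_{k}\htt^{k}(1-\htt)^{n-k}\binom{n}{k}=1$.

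For the monotonicity of $P_{\X}$, I would use that it is a tail sum $\sum_{k\geq n\htt}\binom{n}{k}\varphi^{k}(1-\varphi)^{n-k}$: increasing $\htt$ (through integer values of $n\htt$) deletes the nonzero term indexed by the old value of $n\htt$, giving strict decrease, while differentiation in $\varphi$ telescopes to the single positive term $n\varphi^{n\htt-1}(1-\varphi)^{n(1-\htt)}\binom{n-1}{n\htt-1}$, giving strict increase.

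For $P_{\CH}$ and $P_{\PBR}$ I would split on the sign of $\htt-\varphi$. In the region $\htt\leq\varphi$ both values are constant in $\htt$ (equal to $1$ for $P_{\CH}$, and to $P^{0}_{\PBR,n}(\htt|\htt)$ for $P_{\PBR}$), $P_{\CH}$ is constant in $\varphi$, and $P_{\PBR}$ is strictly increasing in $\varphi$ only while $\varphi\leq\htt$. In the region $\htt\geq\varphi$ I would use $-\log P_{\CH,n}=n\kl(\htt|\varphi)$ together with the signs of its partial derivatives, $\partial_{\varphi}\kl=(\varphi-\htt)/(\varphi(1-\varphi))<0$ and $\partial_{\htt}\kl=\log((\htt/(1-\htt))/(\varphi/(1-\varphi)))>0$, to obtain strict increase in $\varphi$ and strict decrease in $\htt$. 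For $P_{\PBR}$ in the same region I would invoke the strict unimodality of $\varphi\mapsto\varphi^{\htt}(1-\varphi)^{1-\htt}$ about its peak at $\varphi=\htt$ for the increase in $\varphi$, and the successive-value ratio of Eq.~\ref{e:decreasingbinom} for the strict decrease in $\htt$.

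The only delicate part is the bookkeeping at the boundary $\htt=\varphi$ and the exact strict-versus-constant delineation in each regime; the single genuinely substantive inequality is $P_{\CH}\leq P_{\PBR}$, whose proof rests on the combinatorial fact that the maximal binomial term, inflated by the factor $n+1$, exceeds the normalized total mass $1$.
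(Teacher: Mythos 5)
Your proposal is correct and takes essentially the same route as the paper: the ratio $P_{\PBR}/P_{\CH}$ reduces in both regimes to $(n+1)$ times the maximal binomial term (which dominates the unit total mass), the telescoping $\varphi$-derivative and term-deletion handle $P_{\X}$, the signs of the partial derivatives of $\kl(\htt|\varphi)$ handle $P_{\CH}$, and unimodality of $\varphi\mapsto\varphi^{\htt}(1-\varphi)^{1-\htt}$ together with the successive-value ratio of Eq.~\ref{e:decreasingbinom} handle $P_{\PBR}$. One side remark in your write-up is false — for $\htt\leq\varphi$ the value $P^{0}_{\PBR,n}(\htt|\htt)$ does depend on $\htt$, so $P_{\PBR}$ is not constant in $\htt$ in that regime — but the theorem asserts nothing about the $\htt$-dependence of $P_{\PBR}$ there, so this slip does not affect the proof.
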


\section{Comparison of $p$-Values}
\label{s:pvalcomp}

We begin by determining the relationships between $P_{\X}$, $P_{\CH}$
and $P_{\PBR}$ more precisely. Since we are interested in small
$p$-values, it is convenient to focus on the $\log(p)$-values instead
and determine their differences to $O(1/\sqrt{n})$. Because of the
identity $-\log(P_{\CH,n}(t,\varphi))=n\kl(t|\varphi)$, we reference
all $\log(p)$-values to $-\log(P_{\CH,n})$. Here we examine the
differences for $t\geq \varphi$ determined by the following theorem:

\begin{theorem}\label{t:logp-diffs}
  For $0<\varphi< \htt<1$,
  \begin{align}
    -\log(P_{\PBR,n}(\htt|\varphi)) &=
    -\log(P_{\CH,n}(\htt|\varphi)) -\frac{1}{2}\log(n+1) + \frac{1}{2}\log(2\pi\htt(1-\htt))
    + O\left(\frac{1}{n}\right), \label{e:t:logp-diffs:pbr}\\
    -\log(P_{\X,n}(t|\varphi)) &=
    -\log(P_{\CH,n}(t|\varphi))+\frac{1}{2}\log(n)
    -\log\left(\sqrt{\frac{\htt}{2\pi(1-\htt)}}\frac{1-\varphi}{\htt-\varphi}\right)
    + O\left(\frac{1}{n}\right).
    \label{e:t:logp-diffs:x}
  \end{align}
\end{theorem}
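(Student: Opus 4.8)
The plan is to reduce both statements to a single asymptotic expansion of the central binomial-type coefficient $\binom{n}{n\htt}$ via Stirling's formula, since every $p$-value in play is built from the same binomial weight. I start by recalling the closed forms. For $\varphi<\htt<1$ we have the exact identities
\begin{align*}
  -\log(P_{\CH,n}(\htt|\varphi)) &= n\,\kl(\htt|\varphi),\\
  -\log(P_{\PBR,n}(\htt|\varphi)) &= -\log\!\left(\varphi^{n\htt}(1-\varphi)^{n(1-\htt)}(n+1)\tbinom{n}{n\htt}\right),
\end{align*}
where the second line uses the case $\htt\geq\varphi$ of the formula for $P_{\PBR,n}$. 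The key observation is that the ratio $P_{\PBR,n}/P_{\CH,n}$ was already computed in the excerpt to equal $\htt^{n\htt}(1-\htt)^{n(1-\htt)}(n+1)\binom{n}{n\htt}$; its negative logarithm is exactly the quantity I must expand.

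For \eqref{e:t:logp-diffs:pbr}, I would apply Stirling's formula in the form $\log(m!)=m\log m - m + \tfrac12\log(2\pi m)+O(1/m)$ to each of the three factorials in $\binom{n}{n\htt}$. The leading $n\log n$, $n\htt\log(n\htt)$, and $n(1-\htt)\log(n(1-\htt))$ terms combine so that the $n$-linear part of $\log\binom{n}{n\htt}$ equals $n\,H(\htt)$ with $H(\htt)=-\htt\log\htt-(1-\htt)\log(1-\htt)$ the binary entropy; adding the $\htt^{n\htt}(1-\htt)^{n(1-\htt)}$ prefactor cancels this $O(n)$ contribution exactly, which is why only $O(\log n)$ and $O(1)$ terms survive. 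The half-log terms from Stirling give $-\tfrac12\log(2\pi\, n\htt(1-\htt))$, and combining with the explicit $+\log(n+1)$ and writing $\log(n+1)=\log n + O(1/n)$ yields $-\tfrac12\log(n+1)+\tfrac12\log(2\pi\htt(1-\htt))$, as claimed. The $O(1/n)$ remainder comes from collecting the Stirling error terms, each of which is $O(1/n)$ since $\htt$ is bounded away from $0$ and $1$.

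For \eqref{e:t:logp-diffs:x}, I would use the reference $-\log(P_{\X,n})=-\log(P_{\CH,n})+\bigl(-\log(P_{\X,n}/P_{\CH,n})\bigr)$ and expand the tail sum $P_{\X,n}(\htt|\varphi)=\sum_{k\geq n\htt}\binom{n}{k}\varphi^k(1-\varphi)^{n-k}$. The standard move is to show the sum is dominated by its leading term $k=n\htt$: the derivative computation already in the excerpt (and the ratio in \eqref{e:decreasingbinom}) shows successive terms decay geometrically with ratio $\frac{\varphi}{1-\varphi}\frac{1-\htt}{\htt}<1$, so the sum equals the leading term times $\bigl(1-\frac{\varphi}{1-\varphi}\frac{1-\htt}{\htt}\bigr)^{-1}+O(\cdot)$; simplifying the geometric factor gives exactly $\frac{\htt(1-\varphi)}{\htt-\varphi}$. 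Applying Stirling to the leading term $\binom{n}{n\htt}\varphi^{n\htt}(1-\varphi)^{n(1-\htt)}$ produces the $-n\kl(\htt|\varphi)$ piece (cancelling against $-\log P_{\CH,n}$) plus $\tfrac12\log n$ from the Stirling correction $-\tfrac12\log(2\pi n\htt(1-\htt))$ appearing with the opposite overall sign, and combining with the geometric prefactor assembles the constant $-\log\!\bigl(\sqrt{\htt/(2\pi(1-\htt))}\,\tfrac{1-\varphi}{\htt-\varphi}\bigr)$.

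The main obstacle is controlling the tail sum in \eqref{e:t:logp-diffs:x} uniformly enough to pin the error at $O(1/n)$ rather than merely $o(1)$. A clean geometric bound on the full tail is too crude near the dominant term, so I would instead split off the first few terms, expand each by Stirling, and bound the geometric remainder tail separately; the subtlety is that the Stirling ratio between consecutive terms is $1+O(1/n)$-perturbed from the naive geometric ratio, so I must verify these perturbations contribute only $O(1/n)$ to the log of the summed series. Everything else is bookkeeping with Stirling's expansion, using throughout that $\varphi$ and $\htt$ are fixed in the open interval $(0,1)$ with $\htt-\varphi$ bounded away from $0$, which keeps all the error constants uniform.
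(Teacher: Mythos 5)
Your proposal is correct, and for the first identity it coincides with the paper's own proof: the paper's Thm.~\ref{t:pbr-ch} writes the $\PBR$--$\CH$ gap as $-\tfrac12\log(n+1)+H_{n}(\htt)$ and expands $H_{n}$ (essentially $\log\binom{n}{n\htt}$) by Stirling with explicit $O(1/n)$ interval bounds, which is your argument verbatim. For the second identity you take a genuinely different route. The paper never sums the binomial tail directly; instead (Thm.~\ref{t:x-pbr} and Cor.~\ref{cor:x-ch}) it invokes an exact representation of the tail from Thm.~2 of Ref.~\cite{mckay}, namely leading term times $\sqrt{n\varphi(1-\varphi)}\,Y(x)\,e^{E/\sigma}$ with $x=\sqrt{n}(\htt-\varphi)/\sqrt{\varphi(1-\varphi)}$ and $Y$ the Mills-ratio-type function of Eq.~\ref{e:defY}, and then eliminates $Y$ via $-\log\left(tY(t)\right)\in[0,1/t^{2}]$ (Eq.~\ref{e:BndY}); the resulting factor $1/x$ is what produces the constant in Eq.~\ref{e:t:logp-diffs:x}. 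Your alternative --- the tail equals its first term times $\left(1-\tfrac{\varphi(1-\htt)}{(1-\varphi)\htt}\right)^{-1}=\tfrac{\htt(1-\varphi)}{\htt-\varphi}$ up to relative error $O(1/n)$ --- is sound and more self-contained: since the per-step ratio $\tfrac{n-k}{k+1}\tfrac{\varphi}{1-\varphi}$ only decreases in $k$, the pure geometric series is an upper bound, and the $j$-th term is $r^{j}\bigl(1-O(j^{2}/n)\bigr)$, so $\sum_{j}j^{2}r^{j}<\infty$ gives the lower bound; one caveat is that your "first few terms" must be $\Theta(\log n)$ many, since truncating after $O(1)$ terms leaves a relative remainder $\Theta(r^{J})$ that is constant, not $O(1/n)$. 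What each approach buys: the paper's McKay-based bounds are explicit, nonasymptotic, and remain informative when $\htt-\varphi$ is as small as $O(1/\sqrt{n})$, which the paper needs downstream (the confidence-interval endpoint analysis in Thm.~\ref{thm:exact_endpt} and the asymptotic normality of gaps in Thm.~\ref{thm:gap_anorm}); your constants blow up as $r\to1$ when $\htt\to\varphi$, but that is harmless for the fixed-$(\htt,\varphi)$ asymptotics that Thm.~\ref{t:logp-diffs} actually asserts.
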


The theorem follows from Thms.~\ref{t:pbr-ch},~\ref{t:x-pbr} and
Cor.~\ref{cor:x-ch} proven in the Appendix, where explicit interval
expressions are obtained for these $\log(p)$-value differences.  The
order notation assumes fixed $t>\varphi$. The bounds are not uniform,
see the expressions in the appendix for details.

The most notable observation is that there are systematic gaps of
$\log(n)/2+O(1)$ between the $\log(p)$-values. As we already knew,
there is no question that the exact test is the best of the three for
this simple application.  While these gaps may seem large on an
absolute scale, representing factors close to $\sqrt{n}$, they are in
fact much smaller than the experiment-to-experiment variation of the
$p$-values.  To determine this variation, we consider the asymptotic
distributions.  We can readily determine that the $\log(p)$-values are
asymptotically normal with standard deviations proportional to
$\sqrt{n}$, which is transferred from the variance of $\Htheta$.
Compared to these standard deviations the gaps are negligible.  The
next theorem determines the specific way in which asymptotic normality
holds.  Let $N(\mu,\sigma^{2})$ denote the normal distribution with
mean $\mu$ and variance $\sigma^{2}$.  The notation
$X_{n}\xrightarrow{D}N(\mu,\sigma^{2})$ means that $X_{n}$ converges
in distribution to the normal distribution with mean $\mu$ and
variance $\sigma^{2}$.

\begin{theorem} Assume $0<\varphi<\theta<1$.  For $P=P_{\CH,n}$,
  $P=P_{\PBR,n}$ or $P=P_{\X,n}$, the
  $\log(p)$-value $-\log(P)$ converges in distribution according to
  \begin{equation}
    \sqrt{n}(-\log(P)/n-\kl(\theta|\varphi))\xrightarrow{D}
    N(0,\sigma^{2}_{G}),
  \end{equation}
  with 
  \begin{equation*}
    \sigma_{G}^2= \theta(1-\theta)\left(\log\left(\frac{\theta}{1-\theta}\frac{1-\varphi}{\varphi}\right)\right)^2.
  \end{equation*}
\end{theorem}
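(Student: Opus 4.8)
The plan is to reduce all three cases to the Chernoff-Hoeffding $\log(p)$-value, for which an exact closed form is available, and then invoke the central limit theorem together with the delta method, transferring to the other two via Thm.~\ref{t:logp-diffs} and Slutsky's theorem.

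First I would treat $P_{\CH,n}$. Since $\theta>\varphi$, the law of large numbers gives $\Htheta_n\to\theta$ almost surely, so $\Prob(\Htheta_n\geq\varphi)\to 1$, and on this event the identity $-\log(P_{\CH,n}(\Htheta_n|\varphi))=n\,\kl(\Htheta_n|\varphi)$ holds. Hence
\begin{equation}
  \sqrt{n}\left(\frac{-\log(P_{\CH,n})}{n}-\kl(\theta|\varphi)\right)
  = \sqrt{n}\bigl(\kl(\Htheta_n|\varphi)-\kl(\theta|\varphi)\bigr)
\end{equation}
with probability tending to $1$. Because $\Htheta_n=S_n/n$ is the sample mean of i.i.d.\ Bernoulli($\theta$) variables, the central limit theorem yields $\sqrt{n}(\Htheta_n-\theta)\xrightarrow{D}N(0,\theta(1-\theta))$. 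Applying the delta method to the smooth function $g(t)=\kl(t|\varphi)$, whose derivative was computed above as $g'(t)=\log\bigl(\tfrac{t}{1-t}\tfrac{1-\varphi}{\varphi}\bigr)$, gives
\begin{equation}
  \sqrt{n}\bigl(\kl(\Htheta_n|\varphi)-\kl(\theta|\varphi)\bigr)
  \xrightarrow{D} N\!\left(0,\,\bigl(g'(\theta)\bigr)^2\theta(1-\theta)\right),
\end{equation}
and $\bigl(g'(\theta)\bigr)^2\theta(1-\theta)=\sigma_G^2$ by the stated formula. This settles the Chernoff-Hoeffding case.

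For $P_{\PBR,n}$ and $P_{\X,n}$ I would use Thm.~\ref{t:logp-diffs} to write each $\log(p)$-value as $-\log(P_{\CH,n})$ plus a correction. Evaluated at the random point $\Htheta_n$, the PBR correction is $-\tfrac12\log(n+1)+\tfrac12\log(2\pi\Htheta_n(1-\Htheta_n))+O(1/n)$, and the exact correction is $\tfrac12\log(n)-\log\bigl(\sqrt{\Htheta_n/(2\pi(1-\Htheta_n))}\,(1-\varphi)/(\Htheta_n-\varphi)\bigr)+O(1/n)$. Multiplying by the scaling factor $\sqrt{n}/n=1/\sqrt{n}$, each correction becomes $O(\log(n)/\sqrt{n})$: the $\log(n)$ terms vanish after division by $\sqrt{n}$, and since $\Htheta_n\to\theta$ with $\varphi<\theta<1$, the remaining $\Htheta_n$-dependent terms are $O_p(1)$ and hence also vanish after this scaling. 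Thus the rescaled correction tends to $0$ in probability, and Slutsky's theorem gives that $-\log(P_{\PBR,n})$ and $-\log(P_{\X,n})$ share the same limit $N(0,\sigma_G^2)$ as $-\log(P_{\CH,n})$.

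The main obstacle is justifying the evaluation of the fixed-$t$ expansions of Thm.~\ref{t:logp-diffs} at the data-dependent argument $\Htheta_n$, since those order estimates are stated for fixed $t>\varphi$ and are explicitly not uniform. However, the appendix provides explicit interval expressions for the differences with constants bounded on compact subsets of $(\varphi,1)$; since for any $\epsilon>0$ one can confine $\Htheta_n$ to $[\theta-\epsilon,\theta+\epsilon]\subset(\varphi,1)$ with probability tending to $1$, the $O(1/n)$ error is uniform there, so the rescaled correction remains $o_p(1)$. One also discards the vanishing-probability event $\{\Htheta_n<\varphi\}$, on which the Chernoff-Hoeffding identity and the expansions do not apply, without affecting convergence in distribution.
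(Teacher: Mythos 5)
Your proposal is correct and follows essentially the same route as the paper: the paper's proof also handles $P_{\CH}$ by applying the delta method (its Lem.~\ref{ANlemma}) to $F(x)=\kl(x|\varphi)$, and then transfers the result to $P_{\PBR}$ and $P_{\X}$ by using the explicit interval bounds of Thm.~\ref{t:pbr-ch} and Cor.~\ref{cor:x-ch} together with the law of large numbers to show the scaled $\log(p)$-value differences vanish in probability, concluding with the same Slutsky-type argument. Your extra care about the event $\{\Htheta_n<\varphi\}$ and about uniformity of the error terms near $\theta$ is a welcome refinement of details the paper passes over silently, not a different method.
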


The theorem is proven in the Appendix, see Thm.~\ref{thm:gain_anorm}.
For the rest of the paper, we write $P$ or $P_{n}$ for the $p$-values
of any one of the tests when it does not matter which one.

We display the behavior described in the above theorems for $n=100$
and $\theta=0.5$ in Fig.~\ref{fig:pvalue_gap_comparison}. We conclude
that the phenomena discussed above are already apparent for small
numbers of trials. For Fig.~\ref{fig:pvalue_gap_comparison}, we
computed the quantiles of the $\log(p)$-values numerically using the
formulas provided in the previous section, substituting for $t$ the
corresponding quantile of $\Htheta$ given that $\Prob(B=1)=\theta$.
To be explicit, let $t_{r,n}(\theta)$ be the $r$-quantile of $\Htheta$
defined as the minimum value $\htheta$ of $\Htheta$ satisfying
$\Prob(\Htheta\leq \htheta)\geq r$. (For simplicity we do not place
the quantile in the middle of the relevant gap in the distribution.)
For example, $t_{0.5,n}(\theta)$ is the median. Then, by the
monotonicity properties of the tests, the $r$-quantile of
$-\log(P_{n}(\Htheta|\varphi))$ is given by
$-\log(P_{n}(t_{r,n}(\theta)|\varphi))$.

\begin{figure}
  \begin{center}
  \includegraphics[scale=0.7]{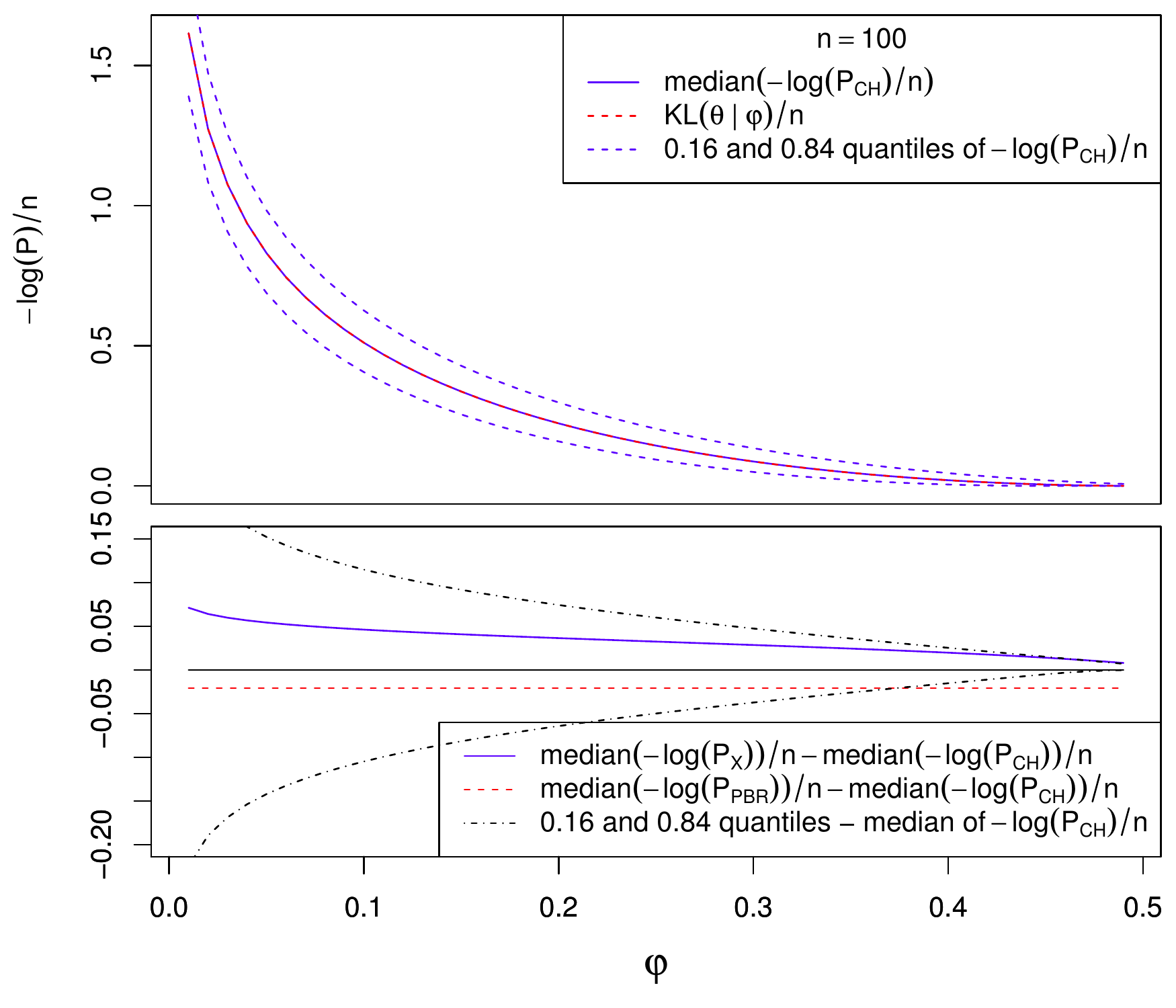}\\
  \end{center}
  \caption{Comparison of $\log(p)$-values at $n=100$ and $\theta=0.5$.
    The top half of the figure shows the median, the $0.16$ and the
    $0.84$ quantile of $-\log(P_{\CH,n}(\Htheta|\varphi))/n$.  For
    $\theta=0.5$, the median agrees with $\kl(\theta|\varphi)$ by
    symmetry.  The lower half shows the median differences
    $-\log(P(\Htheta|\varphi))/n +
    \log(P_{\CH,n}(\Htheta|\varphi))/n$ for $P=P_{\PBR,n}$ and
    $P=P_{\X,n}$. The difference between the $0.16$ and $0.84$
    quantiles and the median for
    $-\log(P_{\CH,n}(\Htheta|\varphi))/n$ are also shown where
    they are within the range of the plot; even for $n$ as small as
    $100$, they dominate the median differences, except where
    $\varphi$ approaches $\theta=0.5$, where the absolute $p$-values
    are no longer extremely small.}
    \label{fig:pvalue_gap_comparison}
\end{figure}

As noted above, the gaps between the $\log(p)$-values are of the form
$\log(n)/2+O(1)$. In fact, it is possible to determine the asymptotic
behavior of these gaps. After accounting for the explicitly given
$O(1)$ terms in Thm.~\ref{t:logp-diffs}, they are asymptotically
normal with variances of order $O(1/n)$.  The standard deviations
  of the gaps are therefore small compared to their size.  The
precise statement of their asymptotic normality is
Thm.~\ref{thm:gap_anorm} in the
Appendix. Fig.~\ref{fig:pvalue_gap_behavior} shows how these gaps
depend on the value $\htheta$ of $\Htheta$ given $\varphi$.  The
  gaps are scaled by $\log(n)$ so that they can be compared to
  $\log(n)/2$ visually for different values of $n$. The deviation from
  $\log(n)/2$ is most notable near the boundaries, where convergence
  is also slower, particularly for $P_{\X}$. This behavior is
  consistent with the divergences as $\htt$ approaches $\varphi$ in
  the explicit interval bounds in Thm.~\ref{t:x-pbr} and
  Cor.~\ref{cor:x-ch}.

\begin{figure}
  \begin{center}
  \includegraphics[scale=0.7]{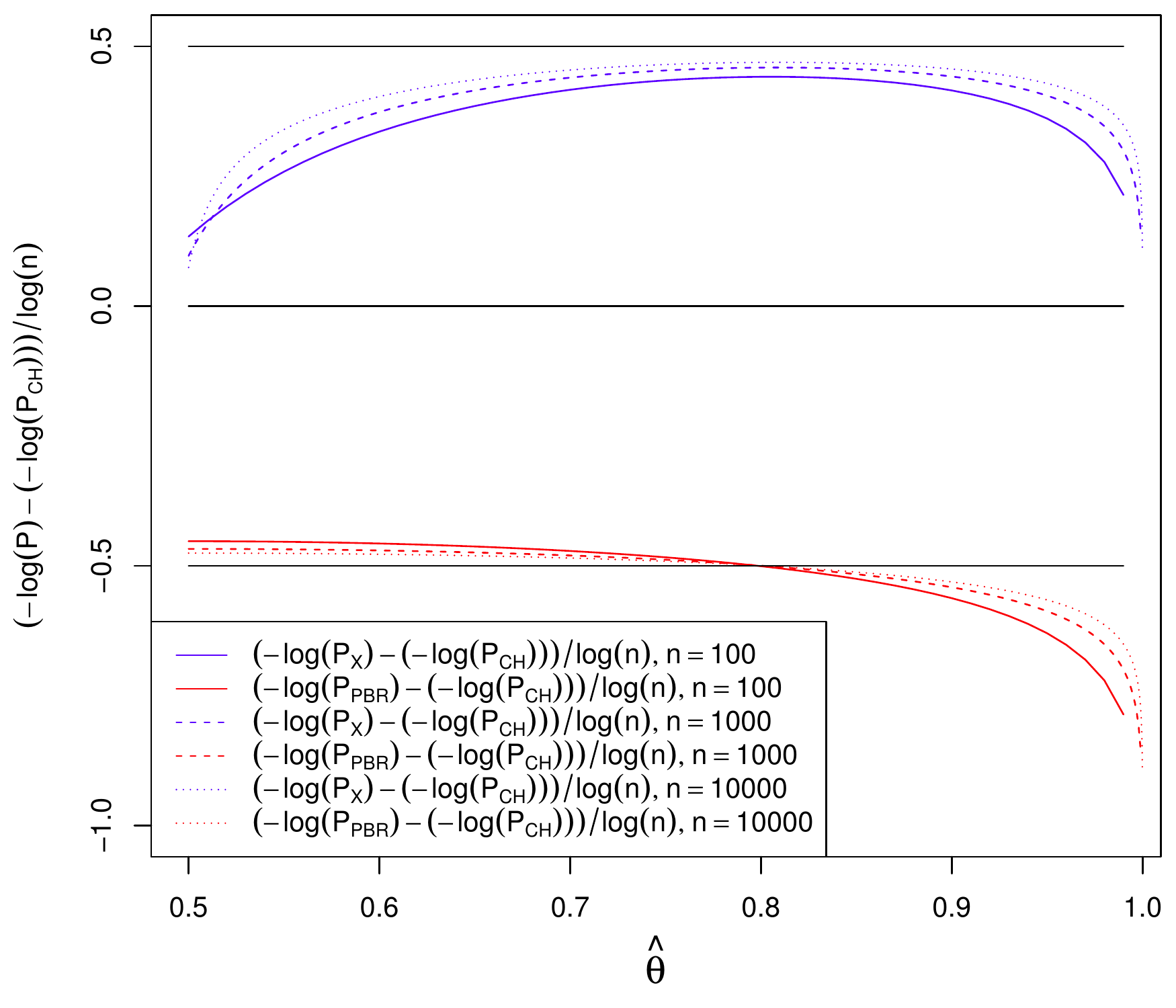}\\
  \end{center}
  \caption{Gaps between the $\log(p)$-values depending on $\htheta$ at
    $\varphi=0.5$.  We show the normalized differences
    $\left(-\log(P_{n}(\htheta|\varphi))+\log(P_{\CH,n}(\htheta|\varphi))\right)/\log(n)$
    for $P=P_{\CH}$ and $P=P_{\X}$ at $n=100,1000,10000$.  For large
    $n$, at constant $\htheta$ with $0.5<\htheta<1$, the $\PBR$ test's
    normalized difference converges to $-0.5$, and the exact test's
    converges to $0.5$. The horizontal lines at $\pm
    0.5$ indicate this limit.  The lowest order normalized asymptotic
    differences from $\pm 0.5$ are $O(1/\log(n))$ and diverge at
    $\htheta=0.5$ and $\htheta=1$.  }
  \label{fig:pvalue_gap_behavior}
\end{figure}

\section{Comparison of Confidence Intervals}
\label{s:cicomp}

Let $P$ be one of $P_{\CH,n}$, $P_{\PBR,n}$ or $P_{\X,n}$.  Given a
value $\htheta$ of $\Htheta$, the level-$a$ confidence set determined
by the test with $p$-value $P$ is $I=\{\varphi|P(\htheta|\varphi)\geq
a\}$.  By the monotonicity properties of $P$, the closure of $I$ is an
interval $[\varphi_{a}(\htheta;P),1]$. We can compute the endpoint
$\varphi_{a}$ by numerically inverting the exact expressions for
$P$. An example is shown in Fig.~\ref{fig:ci_absend}, where we show
the endpoints according to each test for $a=0.01$ and $\htheta=0.5$
as a function of $n$. All tests' endpoints converge to $0.5$ as the
number of trials grows. Notably, the relative separation between the
endpoints is not large at level $a=0.01$.

\begin{figure}
  \begin{center}
  \includegraphics[scale=0.7]{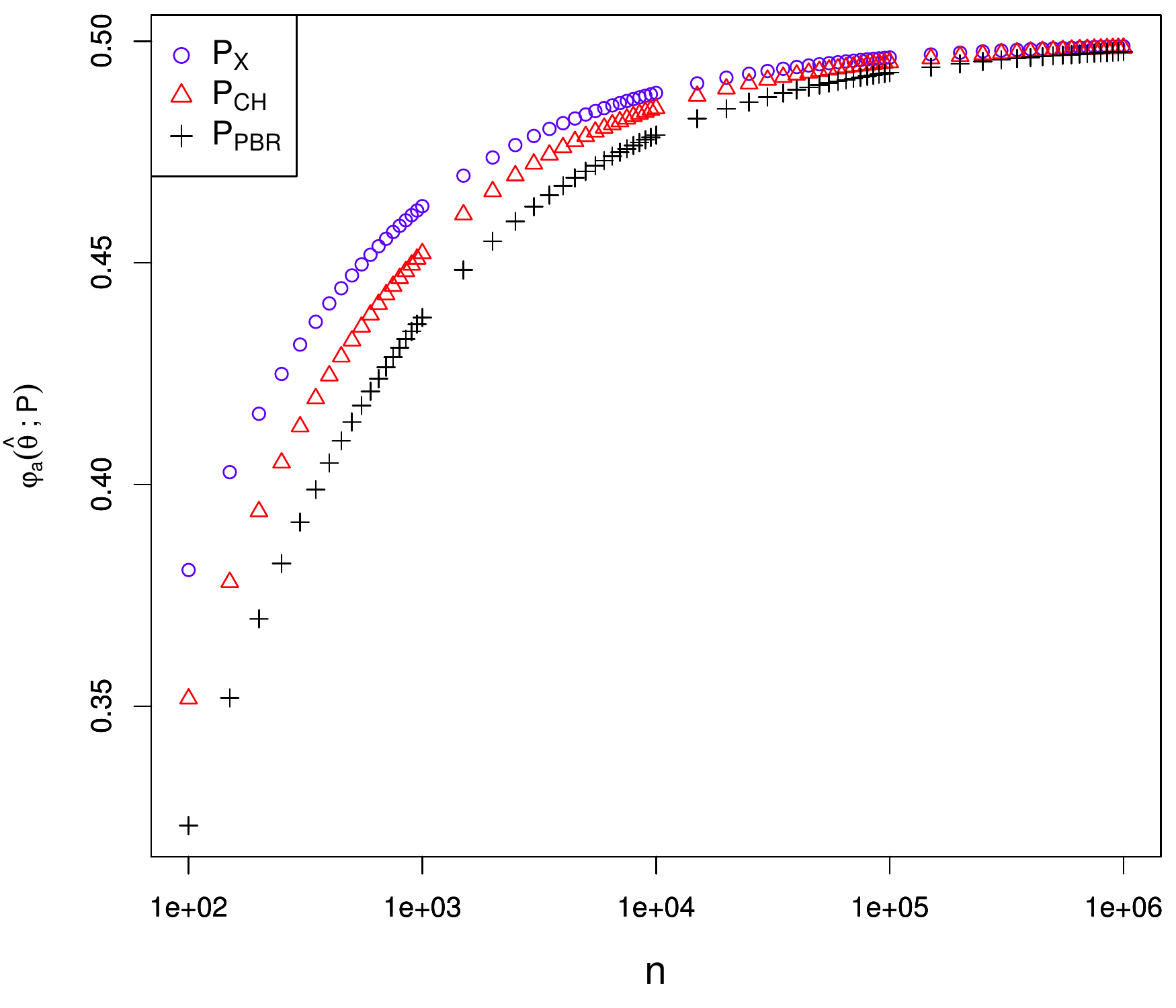}\\
  \end{center}
  \caption{Lower endpoints for the level $0.01$ confidence set of the
    three tests as a function of $n$, where
    $\htheta=0.5$.}
  \label{fig:ci_absend}
\end{figure}

To quantify the behavior of the endpoints for the different tests,
we normalize by the empirical standard deviation
$\hat\sigma=\sqrt{\htheta(1-\htheta)/n}$.
The empirical endpoint deviation is then defined as
\begin{equation}
\gamma_{a}(\htheta;P) = \frac{\htheta-\varphi_{a}(\htheta;P)}{\hat\sigma}.
\end{equation}
For the exact test and for large $n$, we expect this quantity to be
determined by the tail probabilities of a standard normal
distribution. That is, if the significance $a$ is the probability
  that a normal RV of variance $1$ exceeds $\kappa$, we expect
  $\gamma_{a}(\htheta;P_{\X})\approx \kappa$.

We take the point of view that the performance of a test is
characterized by the size of the endpoint deviation. If the relative
size of the deviations for two tests is close to $1$ then they perform
similarly for the purpose of characterizing the parameter $\theta$.
Another way of comparing the intervals obtained is to consider their
coverage probabilities. For our situation, the coverage probability
for test $P$ at $a$ can be approximated by determining $a'$ such that
$\gamma_{a'}(\theta;P_{\X})=\gamma_{a}(\theta;P)$.  From
Thm.~\ref{thm:endpts} below, one can infer that the coverage
probability is then approximately $1-a'\ge 1-a$.  The coverage
probabilities can be very conservative (larger than $1-a$),
particularly for small $a$ and $P=P_{\PBR}$.

We determined interval bounds for the empirical endpoint deviation for
all three tests. The details are in App.~\ref{app:endpoints}. The
next theorem summarizes the results asymptotically.

\begin{theorem}
  \label{thm:endpts}
  Let $q(x) = -\log(\Prob_{N(0,1)}(N\geq x))$ be the negative
  logarithm of the tail of the standard normal.  Fix
  $\htheta\in(0,1)$.  Write $\alpha=|\log(a)|$.  There is a constant
  $c$ (depending on $\htheta$) such that for $\alpha\in (1,cn]$,
  $\gamma_{a}$ satisfies 
  \begin{align}
    \gamma_{a}(\htheta;P_{\CH}) &= \sqrt{2\alpha}+O(\alpha/\sqrt{n}),\\
    \gamma_{a}(\htheta;P_{\PBR}) &= 
        \sqrt{2\alpha+\log(n)/2-\log(2\pi\htheta(1-\htheta))/2}+O(\alpha/\sqrt{n}),\\
    \gamma_{a}(\htheta;P_{\X}) &= q^{-1}(\alpha)+O(\alpha/\sqrt{n}).\label{eq:thm:endpts3}
  \end{align}
  The last expression has the following approximation relevant for
  sufficiently large $\alpha$:
  \begin{equation}
    \gamma_{a}(\htheta;P_{\X}) = 
    \sqrt{2\alpha-\log(2\pi)- \log(2\alpha-\log(2\pi))}+O(\log(\alpha)/\alpha^{3/2})+O(\alpha/\sqrt{n}).
    \label{eq:thm:endpts4}
  \end{equation}
\end{theorem}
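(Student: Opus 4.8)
The plan is to characterize each endpoint $\varphi_a$ implicitly and then invert. By the monotonicity of $P(\htheta|\varphi)$ in $\varphi$ established in Thm.~\ref{t:pproperties}, and since for small $a$ (i.e.\ $\alpha>1$) the value $P(\htheta|\varphi)$ rises from near $0$ as $\varphi\to 0^{+}$ to a value exceeding $a$ at $\varphi=\htheta$, the confidence set $I$ has a unique left endpoint $\varphi_a\in(0,\htheta)$ determined by $-\log P(\htheta|\varphi_a)=\alpha$. I would then substitute $\varphi_a=\htheta-\gamma\hat\sigma$, with $\gamma=\gamma_a$ the unknown, so the task reduces to solving a single scalar equation for $\gamma$ in terms of $\alpha$, $n$ and $\htheta$.

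The engine for all three tests is the local expansion of the Kullback-Leibler divergence. Writing $\epsilon=\gamma\hat\sigma$ and expanding $\kl(\htheta|\htheta-\epsilon)$ about $\epsilon=0$ gives $\kl(\htheta|\htheta-\epsilon)=\epsilon^2/(2\htheta(1-\htheta))+O(\epsilon^3)$, since the first $\varphi$-derivative vanishes at $\varphi=\htheta$ and a direct computation shows the second derivative equals $1/(\htheta(1-\htheta))$. Using $\hat\sigma^2=\htheta(1-\htheta)/n$ this collapses to the clean relation $n\kl(\htheta|\htheta-\gamma\hat\sigma)=\gamma^2/2+O(\gamma^3/\sqrt{n})$. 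For the Chernoff-Hoeffding test, where $-\log P_{\CH,n}=n\kl$ exactly, the endpoint equation is simply $\gamma^2/2=\alpha$, giving $\gamma_a=\sqrt{2\alpha}$; tracking the cubic remainder and propagating it through the square root produces the stated $O(\alpha/\sqrt{n})$ error.

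For the $\PBR$ and exact tests I would feed in the precise gaps from Thm.~\ref{t:logp-diffs}. For fixed $\htheta$ the $\PBR$ gap term is independent of $\varphi$, so it acts as a constant shift of the level, turning the endpoint equation into $\gamma^2/2=\alpha+(\text{gap})$, which inverts immediately to the advertised square-root form. The exact test is more delicate: its gap in Eq.~\ref{e:t:logp-diffs:x} contains the factor $(1-\varphi)/(\htheta-\varphi)$, which at $\varphi=\varphi_a$ equals $(1-\varphi_a)/(\gamma\hat\sigma)$ and so contributes a $-\log\gamma$ term. Substituting this together with $n\kl=\gamma^2/2$ yields $\alpha=\gamma^2/2+\log\gamma+\frac{1}{2}\log(2\pi)+O(\cdot)$; recognizing the right-hand side as the large-argument expansion of the normal-tail exponent $q$ identifies the endpoint as $\gamma_a=q^{-1}(\alpha)$, which is Eq.~\ref{eq:thm:endpts3}. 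Inverting $\gamma^2=2\alpha-\log(2\pi)-\log(\gamma^2)$ by one round of fixed-point iteration about the leading solution $\gamma^2\approx 2\alpha$ then produces the refined Eq.~\ref{eq:thm:endpts4}, with the iteration error accounting for the $O(\log(\alpha)/\alpha^{3/2})$ term.

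The main obstacle is uniformity of the error terms over the whole range $\alpha\in(1,cn]$ rather than pointwise-in-$\alpha$ control. As $\alpha$ approaches $cn$ the shift $\epsilon=\gamma\hat\sigma$ is only $O(1)$, so the KL Taylor remainder and, for the exact test, the binomial-to-normal (Edgeworth/Berry-Esseen-type) approximation underlying Thm.~\ref{t:logp-diffs} must be controlled with explicit, non-asymptotic bounds that remain valid as $\varphi_a$ moves a constant distance below $\htheta$; this is precisely what forces the explicit interval expressions of the Appendix and fixes the admissible constant $c$. Keeping the $O(\alpha/\sqrt{n})$ error uniform through the square-root inversion, especially in the regime where $\gamma\hat\sigma$ ceases to be small, is where the real work lies.
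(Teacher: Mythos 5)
Your overall strategy---solving $-\log P(\htheta|\varphi_a)=\alpha$ after substituting $\varphi_a=\htheta-\gamma\hat\sigma$, Taylor-expanding the KL divergence for the $\CH$ test, treating the $\PBR$ gap as a $\varphi$-independent shift of the level $\alpha$, and obtaining Eq.~\ref{eq:thm:endpts4} by fixed-point iteration on $\gamma^2=2\alpha-\log(2\pi)-\log(\gamma^2)$---is the same as the paper's (Thms.~\ref{CHinterval}, \ref{thm:PBRinterval}, \ref{thm:exact_endpt} and the proof in the main text). There is, however, a genuine gap in your derivation of Eq.~\ref{eq:thm:endpts3} for the exact test.

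You propose to ``feed in the precise gap from Thm.~\ref{t:logp-diffs}'' and then to recognize $\gamma^2/2+\log\gamma+\tfrac12\log(2\pi)$ as the large-argument expansion of $q(\gamma)$. Both steps discard a term of size $\Theta(1/\gamma^2)=\Theta(1/\alpha)$. First, the error terms in Thm.~\ref{t:logp-diffs} and Cor.~\ref{cor:x-ch} are $O(1/n)$ only for \emph{fixed} $\htt-\varphi$ (the paper states explicitly that they are not uniform); at $\varphi=\varphi_a$, where $\htheta-\varphi_a=\Theta(\sqrt{\alpha/n})$, the hidden term $\varphi(1-\varphi)/\bigl((\htheta-\varphi)^2 n\bigr)$ is $\Theta(1/\alpha)$, not $O(1/n)$. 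Second, $q(\gamma)$ differs from its expansion by $-\log(\gamma Y(\gamma))\in[0,1/\gamma^2]$ (Eq.~\ref{e:BndY}). An $O(1/\alpha)$ error in the level propagates through $\frac{d}{dy}q^{-1}(y)\approx 1/\sqrt{2\alpha}$ into an $O(\alpha^{-3/2})$ error in $\gamma_a$, which for fixed or slowly growing $\alpha$ does \emph{not} vanish as $n\to\infty$ and hence is not $O(\alpha/\sqrt{n})$ as Eq.~\ref{eq:thm:endpts3} requires; your route establishes the claim only for $\alpha\gtrsim n^{1/5}$. The paper avoids this by never expanding $q$ at this stage: it works from Thm.~\ref{t:x-pbr} (Eq.~\ref{e:t:x-ch}), where the normal-tail function $Y$ appears exactly, so that $\tilde\gamma^2/2+\log(2\pi)/2-\log Y(\tilde\gamma)$ is \emph{identically} $q(\tilde\gamma)$ and the remaining errors are the uniform $O(\alpha^{3/2}/\sqrt{n})$ interval bounds; the large-argument expansion of $q$ is deferred to Eq.~\ref{eq:thm:endpts4}, whose statement carries the extra $O(\log(\alpha)/\alpha^{3/2})$ term that pays for exactly this approximation. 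Relatedly, you locate the uniformity difficulty at the large-$\alpha$ end (where $\gamma\hat\sigma=O(1)$); the quantitative failure just described occurs instead at the small-$\alpha$ end of the range $(1,cn]$.
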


For $\alpha=o(\sqrt{n})$, the relative error of the approximation in
the first two identities goes to zero as $n$ grows. This is not the
case for the last identity, where the relative error for large $n$ is
dominated by the term $O(\log(\alpha)/\alpha^{3/2})$, and large
$\alpha$ is required for a small relative error.

\begin{proof}
  The expression for $\gamma_{a}(\htheta;P_{\CH})$ is obtained
  from Thm.~\ref{CHinterval} in the Appendix by changing the relative
  approximation errors into absolute errors.

  To obtain the expression for $\gamma_{a}(\htheta;P_{\PBR})$, note
  that the term $\Delta$ in Thm.~\ref{thm:PBRinterval} satisfies
  $\Delta=\log(n)/2-\log(2\pi\htheta(1-\htheta))/2+O(1/n)$, see
  Thm.~\ref{t:pbr-ch}.  The $O(1/n)$ under the square root pulls out to
  an $O(1/(\sqrt{\max(\alpha,\log(n))}n))$ term that is dominated by
  $O(\alpha/\sqrt{n})$ because $\alpha\ge 1$ by assumption.

  To obtain the expressions for $\gamma_{a}(\htheta;P_{\X})$, we refer
  to Thm.~\ref{thm:exact_endpt}, where the lower bound on $\alpha$
  implies $\alpha\geq 1>\log(2)$. The intervals in
  Thm.~\ref{thm:exact_endpt} give relative errors that need to be
  converted to absolute quantities.  By positivity and monotonicity of
  $q^{-1}$, for sufficiently large $n$ and for some positive constants
  $u$ and $v$, we have
  \begin{equation}
    \gamma_{a}(\htheta;P_{\X})\in
    \left[
      q^{-1}(\alpha(1-u\sqrt{\alpha}/\sqrt{n}))(1-v\sqrt{\alpha}/\sqrt{n}),
      q^{-1}(\alpha(1+u\sqrt{\alpha}/\sqrt{n}))(1+v\sqrt{\alpha}/\sqrt{n})
    \right].
  \end{equation}
  Explicit values for $u$ and $v$ can be obtained from
  Thm.~\ref{thm:exact_endpt}.  We simplified the argument of
  $q^{-1}$ by absorbing the additive terms in the theorem into the
  term $u\alpha\sqrt{\alpha}/\sqrt{n}$ with the constant $u$ chosen
  to be sufficiently large.  Consider Eq.~\ref{eq:q-1relerr} with
  $\delta=u\sqrt{\alpha}/\sqrt{n}$.  For sufficiently large $n$, the
  expression in the denominator of the approximation error on the
  right-hand side exceeds a constant multiple of $\alpha$. From
  this, with some new constant $u'$,
  \begin{equation}
    \gamma_{a}(\htheta;P_{\X})\in
    \left[
      q^{-1}(\alpha)(1-u'\sqrt{\alpha}/\sqrt{n})(1-v\sqrt{\alpha}/\sqrt{n}),
      q^{-1}(\alpha)(1+u'\sqrt{\alpha}/\sqrt{n})(1+v\sqrt{\alpha}/\sqrt{n})
    \right],
  \end{equation}
  which, with order notation simplifies further to
  \begin{equation}
    \gamma_{a}(\htheta;P_{\X})=
    q^{-1}(\alpha)(1+O(\sqrt{\alpha}/\sqrt{n})).
  \end{equation}
  It now suffices to apply $q^{-1}(\alpha)=O(\sqrt{\alpha})$ (see
  the proof of Eq.~\ref{eq:thm:endpts4} below) and
  Eq.~\ref{eq:thm:endpts3} is obtained.  
    
  For Eq.~\ref{eq:thm:endpts4}, we bound $x=q^{-1}(\alpha)$, which we
  can do via bounds for $\alpha=q(x)$.  From the expression
  $q(x)=x^{2}/2+\log(2\pi)/2-\log(Y(x))=
  x^{2}/2+\log(2\pi)/2+\log(x)-\log(xY(x))$ in the statement of
  Thm.~\ref{thm:exact_endpt} and the bounds in Eq.~\ref{e:BndY}, we
  have the two inequalities
  \begin{align}
    q(x)&=x^{2}/2+\log(2\pi)/2 +\log(x)-\log(x Y(x))\geq x^{2}/2+\log(2\pi)/2+\log(x),\label{eq:qlowerl}\\
    q(x)&\leq  x^{2}/2+\log(2\pi)/2+\log(x)+1/x^{2}.\label{eq:qupperl}
  \end{align}
  Let $l(x)=x^{2}/2+\log(2\pi)/2+\log(x)$, which is monotonically
  increasing, as is $q$. The first inequality implies that $q^{-1}\leq
  l^{-1}$.  We need a bound of the form $q(x)\leq dx^{2}$, from which
  we can conclude that $x^{2}\geq \alpha/d$.  A bound of this type can
  be obtained from Eq.~\ref{e:q-1_bounds} in the Appendix. For
  definiteness, we restrict to $\alpha\geq 6$ and show that the bound
  holds with $d=1$.  By Eq.~\ref{eq:qupperl}, it suffices to establish
  that for $x\geq\sqrt{6}$, $l(x)+1/x^{2}\leq x^{2}$.  Since
  $\log(2\pi)/2\leq 1$, we have $\log(2\pi)/2+\log(x)+1/x^{2}\leq
  1+\log(1+(x-1))+1/x^{2} \leq x+1/x^{2}$. For $x\geq 9/4$,
  $x+1/x^{2}\leq x^{2}/2$. To finish the argument, apply the
  inequality $\sqrt{6}\geq 9/4$. \MKc{Permanent comment to check this
    inequality: We need $(1+1/x^{3})\leq x/2$. The left-hand side is
    decreasing and the right-hand side is increasing, so it suffices
    to check the boundary value. Setting $x=2+1/4$ works since
    $1/x^{3}\leq 1/8$.  Also $(9/4)^{2}=81/16\leq 6$.  The context
    does not require that we obtain tighter bounds.}

  Given the bound $x^{2}\geq\alpha$, Eq.~\ref{eq:qupperl} becomes
  $q(x)= \alpha\leq l(x)+1/\alpha$. With
  Eq.~\ref{eq:qlowerl} we get $\alpha=q(x) \in
  l(x)+[0,1]/\alpha$. Equivalently,
  \begin{equation}
     l(x)\in \alpha+\frac{1}{\alpha}[-1,0].
  \end{equation}
  Applying the monotone $l^{-1}$ on both sides gives
  \begin{equation}
     x=l^{-1}(l(x))\in l^{-1}\left(\alpha+\frac{1}{\alpha}[-1,0]\right).
  \end{equation}
  Let $\alpha'$ satisfy $x=l^{-1}(\alpha')$ with
  $\alpha'=\alpha+\delta$ and $\delta\in[-1,0]/\alpha$. Write
  $z=x^{2}$ and $c=\log(2\pi)$.  We have
  $l(z^{1/2})=z/2+c/2+\log(z)/2=\alpha'$, which we can write as a
  fixed point equation $z=f(z)$ for $z$ with
  $f(z)=2\alpha'-c-\log(z)$.  We can accomplish our goal by
  determining lower and upper bounds on the fixed point.  Since
  $\frac{d}{dy}f(y)=-1/y<0$ for $y>0$, the iteration
  $z_{0}=2\alpha'-c$ and $z_{k}=f(z_{k-1})$ is alternating around the
  fixed point $z$, provided $z_{k}>0$ for all $k$.  Provided
  $z_{0}>1$, $z_{1}=f(z_{0})<z_{0}$, from which we conclude that
  $z_{1}\leq z<z_{0}$. Since we are assuming that $\alpha\geq 6$ and
  from above $z_{0}\geq 2(\alpha-1/\alpha)-c$, the condition $z_{0}>1$
  is satisfied.  If $z_{1}\geq 1$, then $0>\frac{d}{dy}f(y)>-1$
  between $z_{1}$ and $z_{0}$, which implies that $z_{0}$ and $z_{1}$
  are in the region where the iteration converges to $z$. For our
  bounds, we only require $z_{1}>0$, so that we can bound $z$
  according to $z_{1}\leq z\leq z_{2}$. That $z_{1}>0$ follows from
  $\log(y)<y$ for $y>0$.  We have
  \begin{align}
     z_{2}-z_{1} &= z_{0}-\log(z_{1}) - (z_{0}-\log(z_{0})) \notag\\
     &= \log(z_{0}/z_{1})\notag\\
     &= \log(z_{0}/(z_{0}-\log(z_{0})))\notag\\
     &= -\log(1-\log(z_{0})/z_{0})\notag\\
     &= O(\log(z_{0})/z_{0}) = O(\log(\alpha')/\alpha')= O(\log(\alpha)/\alpha),
     \label{eq:z2-z1}
  \end{align}
  where $z_{0}=2\alpha'-c\in 2\alpha-c + 2[-1,0]/\alpha$,
  and so $-\log(z_0)=-\log(2\alpha-c)+O(1/\alpha^{2})$.
  For $z_{1}$ we get $z_{1}=z_{0}-\log(z_{0}) = 2\alpha-c-\log(2\alpha-c)+O(1/\alpha)$.
  Applying Eq.~\ref{eq:z2-z1} and from the definitions,
  \begin{equation}
    q^{-1}(\alpha) = x = \sqrt{2\alpha-c-\log(2\alpha-c)+O(\log(\alpha)/\alpha)}.
  \end{equation}  
  The approximation error in Eq.~\ref{eq:thm:endpts4} is obtained by
  expanding the square root.  We could have used Newton's method
  starting from $z_{0}$ to obtain better approximations in one step,
  but the resulting expression is more involved.
\end{proof}

The expression for $\gamma_{a}(\htheta;P_{\X})$ confirms our
expectation that it approaches the expected value for a standard
normal distribution and may be compared to the Berry-Esseen
theorem~\cite{Nagaev}. The empirical endpoint deviation of the $\CH$
test approaches that of the exact test for small $a$ (large $\alpha$).
Their squares differ by a term of order $\log(\alpha)=\log|\log(a)|$.
Notably, the ratio of the $\PBR$ and $\CH$ tests' empirical endpoint
deviation grows as $\Theta(\sqrt{\log(n)/\alpha})$. The relationships
are visualized in
Figs.~\ref{fig:normalized_endpts1},~\ref{fig:normalized_endpts2}
and~\ref{fig:normalized_endpts3} for different values of $a$.  The
figures show that the relative sizes of the empirical endpoint
deviations tend toward $1$ with smaller $a$. The
$\Theta(\sqrt{\log(n)/\alpha})$ relative growth of the $\PBR$ test's
endpoint deviations leads to less than a doubling of the deviations
relative to the exact test's at $a=0.01$ and $a=0.001$ even for
$n=10^{6}$. So while the test's coverage probabilities are much closer
to $1$ than the nominal value of $1-a$, we believe that it does not
lead to unreasonably conservative results in many applications.

\begin{figure}
  \begin{center}
  \includegraphics[scale=0.7]{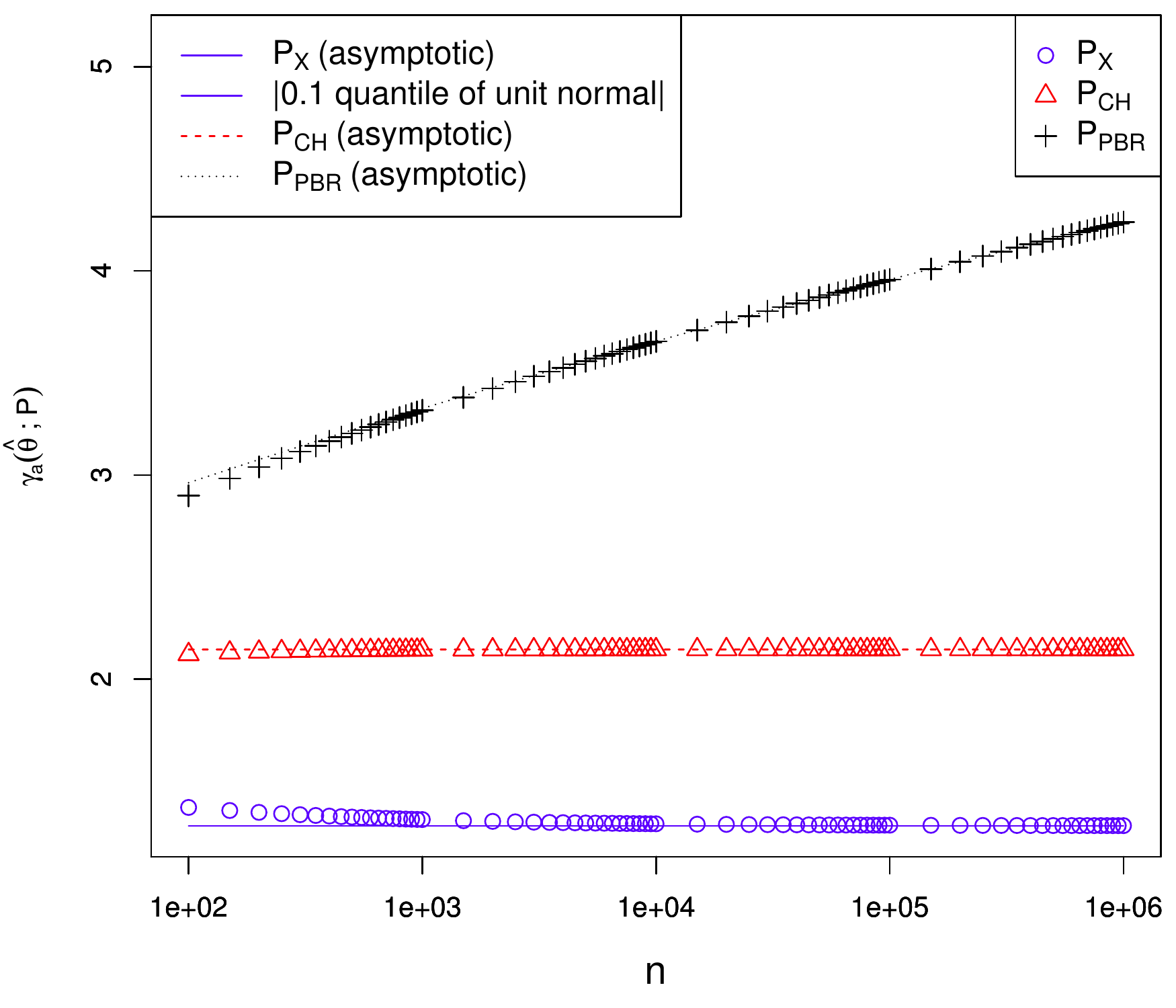}\\
  \end{center}
  \caption{Empirical confidence set endpoint deviations at level
    $a=0.1$ for $\htheta=0.5$ as a function of $n$.  The continuous
    lines show the expressions obtained after dropping the
    $O(1/\sqrt{n})$ terms. For the exact test, these expressions are
    the same as the normal approximation and therefore match the
    absolute value of the $0.1$ quantile of a unit normal.}
  \label{fig:normalized_endpts1}
\end{figure}

\begin{figure}
  \begin{center}
  \includegraphics[scale=0.7]{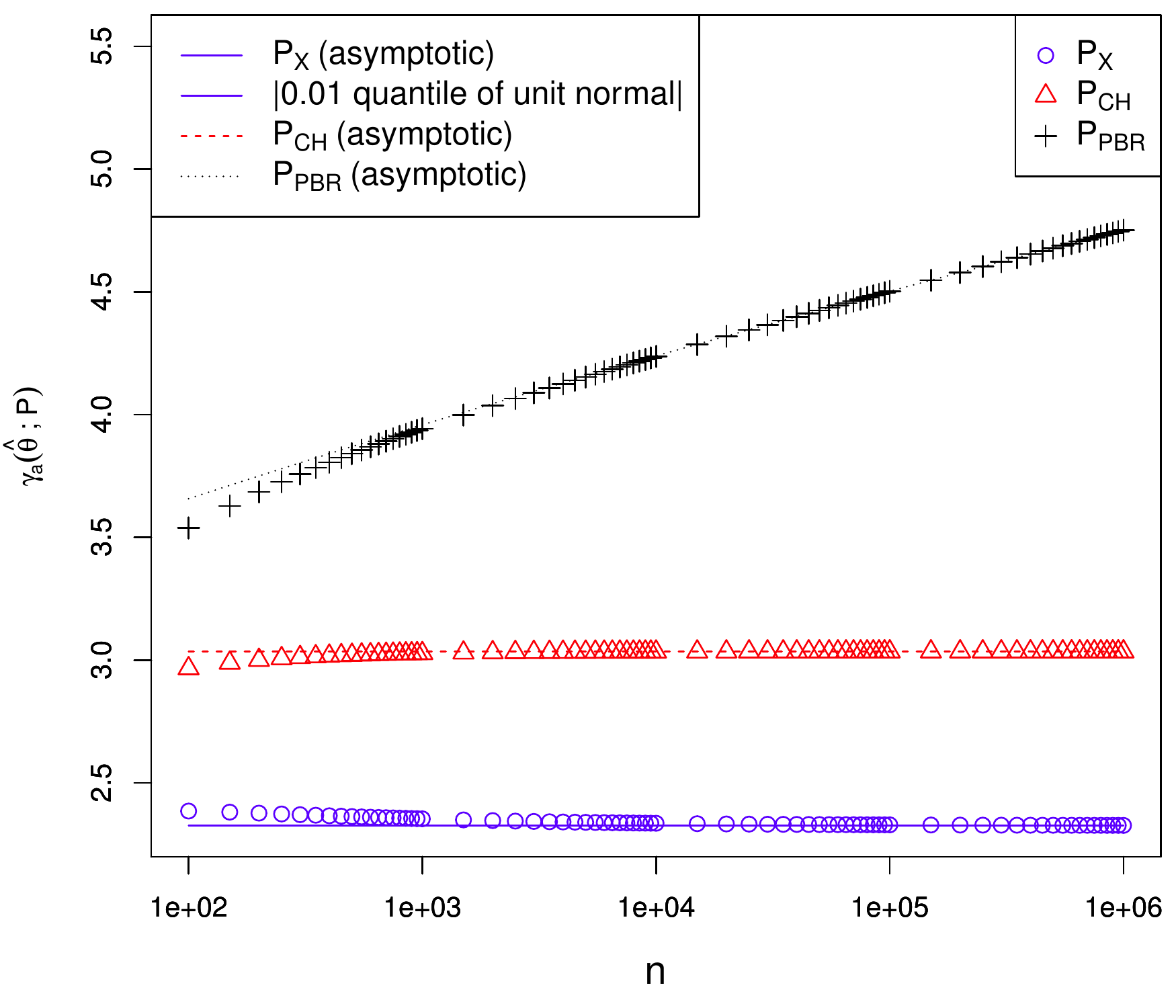}\\
  \end{center}
  \caption{Empirical confidence set endpoint deviations at level
    $a=0.01$ for $\htheta=0.5$ as a function of $n$.  See the
    caption of Fig.~\ref{fig:normalized_endpts1}.}
  \label{fig:normalized_endpts2}
\end{figure}

\begin{figure}
  \begin{center}
  \includegraphics[scale=0.7]{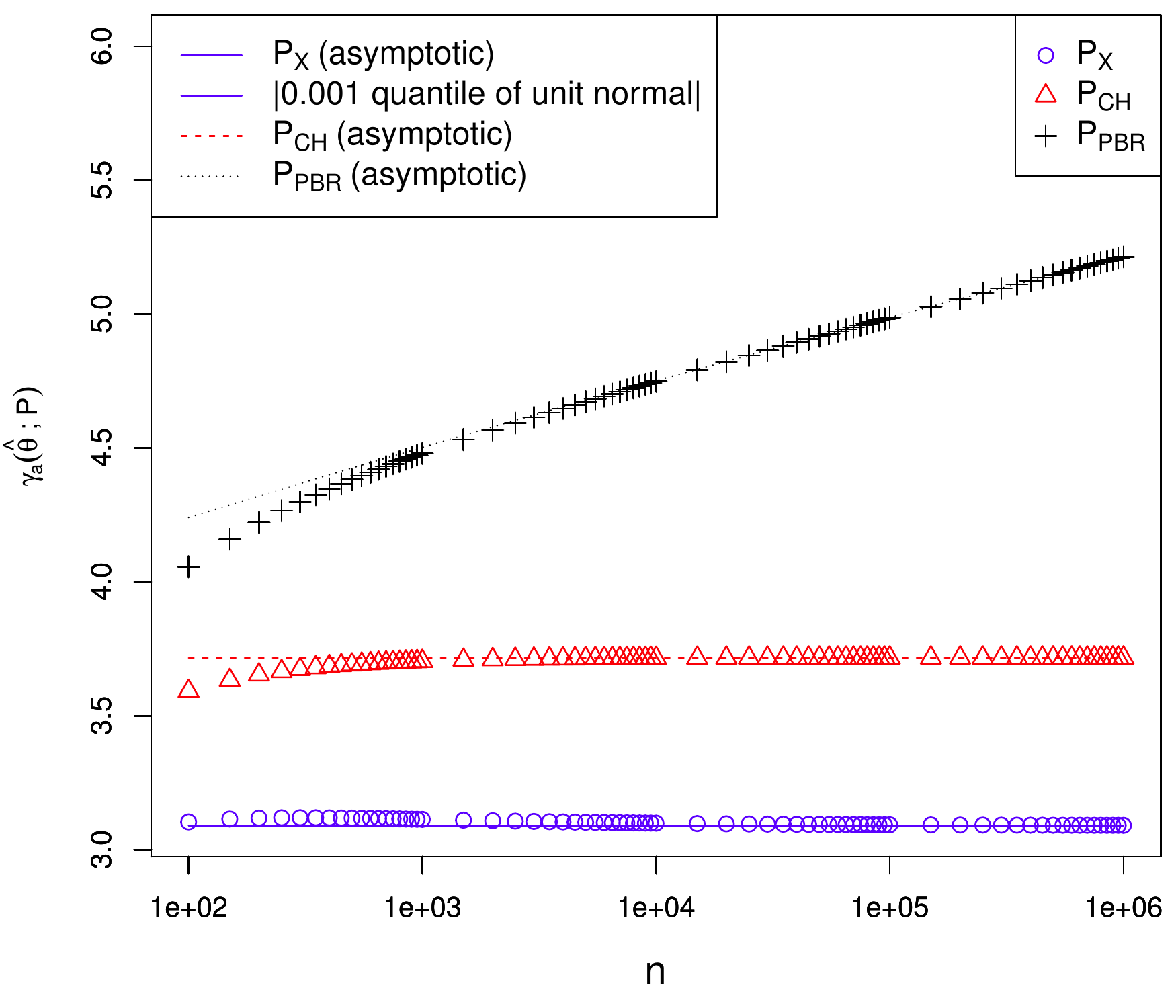}\\
  \end{center}
  \caption{Empirical confidence set endpoint deviations at level
    $a=0.001$ for $\htheta=0.5$ as a function of $n$.  See the
    caption of Fig.~\ref{fig:normalized_endpts1}.}
  \label{fig:normalized_endpts3}
\end{figure}

Next we consider the behavior of the true
endpoint deviations given by the normalized difference
of the true success probability $\theta$ and the endpoint obtained
from one of the tests.  Let $\sigma=\sqrt{\theta(1-\theta)/n}$ be
the true standard deviation and define the true endpoint deviation
determined by test $P$ as
\begin{align}
  \tilde\gamma_{a}(\Htheta|P) &= (\theta-\varphi_{a}(\Htheta|P))/\sigma\notag\\
  &= (\theta-\Htheta)/\sigma + \gamma_{a}(\Htheta|P)\hat\sigma/\sigma.
\end{align}
The true endpoint deviations show how the inferred endpoint compares
to $\theta$ and therefore directly exhibits the statistical
fluctuations of $\Htheta$. In contrast, the empirical endpoint
deviations are to lowest order independent of $\htheta-\theta$. 

We take the view that two tests' endpoints perform similarly if their
true endpoint deviations differ by an amount that is small compared to
the width of the distribution of the true endpoint deviations. To
compare the three tests on this basis, we consider the quantiles for
$\Htheta$ corresponding to $\pm\kappa$ Gaussian standard deviations
from $\theta$ with $\kappa$ constant.  The quantiles satisfy
$\theta_{\pm \kappa}=\theta\pm\kappa\sigma(1+O(1/\sqrt{n}))$, by the
Berry-Esseen theorem or from Thm.~\ref{thm:exact_endpt}. Since
$\hat\sigma=\sigma(1+O(1/\sqrt{n}))$, we can also see that
$\gamma_{a}(\theta_{\pm
  \kappa}|P)=\gamma_{a}(\htheta|P)+O(1/\sqrt{n})$, and so by
  substituting into the definition,
\begin{align}
  \tilde\gamma_{a}(\theta_{\pm \kappa}|P) = 
  \gamma_{a}(\theta|P)\pm \kappa +O(1/\sqrt{n}),
\end{align}
where the implicit constants depend on $\kappa$.  For large $\alpha$,
the $\CH$ and exact tests' endpoints are close and are dominated by
$\kappa$, so they perform similarly. But this does not hold for the
comparison of the $\CH$ or the exact test's endpoints to those of the
$\PBR$ test, since the latter's endpoint deviation grows as
$\sqrt{\log(n)/2}$.

The $\PBR$ test's robustness to stopping rules requires that endpoint
deviations must grow. Qualitatively, we expect a growth of at least
$\Omega(\sqrt{\log\log(n)})$ due to the law of the iterated
logarithm. This growth is slower than the $\sqrt{\log(n)/2}$ growth
found above, suggesting that improvements are possible, as observed in
Ref.~\cite{Shafer}. In many applications, the number of trials to be
acquired can be determined ahead of time, so full robustness to
stopping rules is not necessary. However, the ability to adapt to
changing experimental conditions may still be helpful, as the example
in Sect.~\ref{s:basic} shows. If we know the number of trials ahead of
time, we can retain the ability to adapt while avoiding the asymptotic
growth of the endpoint deviations of the $\PBR$ test.

A strategy for avoiding the asymptotic growth of the $\PBR$ test's
endpoint deviations is to set aside the first $m=\lambda n$ of the
trials for training to infer the probability of success, and then use
this to determine the test factor to be used on the remaining
$(1-\lambda)n$ of the trials.  With this strategy, the endpoint
deviations are bounded on average and typically. We formalize the
training strategy as follows: Modify Eq.~\ref{eq:pbr_testfacts} by
setting $F_{k=1}=1$ for $k< m$ and for $k\ge m$,
\begin{equation}
  F_{k+1}(B_{k+1}) = F(B_{k+1})=\left(\frac{\Htheta_{m}}{\varphi}\right)^{B_{k+1}}
  \left(\frac{1-\Htheta_{m}}{1-\varphi}\right)^{1-B_{k+1}}.
\end{equation}
Let $G=F$ if $\varphi\leq\Htheta_{m}$ and $G=1$ otherwise.  The
$G_{k+1}$ are valid test factors for the null $\cB_{\varphi}$.  A
$p$-value for testing $\overline{\cB}_{\varphi}$ is given by
\begin{align}
  P_{\lambda}(\mathbf{B}|\varphi) &= G(1)^{-(n-m)\Htheta'_{m}}G(0)^{-(n-m)(1-\Htheta'_{m})}
\end{align}
where $\Htheta'_{m}$ is defined by
$(n-m)\Htheta'_{m}+m\Htheta_{m}=n\Htheta_{n}$.  We call this the
$P_{\lambda}$ test.

Define
\begin{align}
  Q_{\lambda}(\mathbf{B}|\varphi) &=
  \left(\frac{\varphi}{\Htheta_{m}}\right)^{(n-m)\Htheta'_{m}}\left(\frac{1-\varphi}{1-\Htheta_{m}}\right)^{(n-m)(1-\Htheta'_{m})}.
\end{align}
Then for $\varphi\leq\Htheta_{m}$,
$Q_{\lambda}(\mathbf{B}|\varphi)=P_{\lambda}(\mathbf{B}|\varphi)$.
To investigate the behavior of these quantities, we consider values
$\mathbf{b}$, $\htheta$, $\htheta_{m}$ and $\htheta'_{m}$ of the
corresponding RVs. As a function of $\varphi$,
$Q_{\lambda}(\mathbf{b}|\varphi)$ is maximized at
$\varphi=\htheta'_{m}$ and monotone on either side of
$\htheta'_{m}$. If $\htheta_{m}\leq\varphi\leq \htheta'_{m}$, then
$Q_{\lambda}(\mathbf{b}|\varphi)\geq
1=P_{\lambda}(\mathbf{b}|\varphi)$, So for
$\varphi\leq\max(\htheta_{m},\htheta'_{m})$, we can use
$Q_{\lambda}$ instead of $P_{\lambda}$ without changing endpoint
calculations.

For determining the endpoint of a level-$a$ one-sided confidence
interval from $P_{\lambda}$, we seek the maximum $\varphi$ such that
for all $\varphi'\leq\varphi$, $P_{\lambda}(\mathbf{b}|\varphi')\leq
a$.  This maximum value of $\varphi$ satisfies that
$\varphi\leq\min(\htheta'_{m},\htheta_{m})$: For
$\htheta_{m}\leq\htheta'_{m}$, this follows from
$P_{\lambda}(\mathbf{b}|\htheta_{m})=1$.  For $\htheta_{m}\geq
\htheta'_{m}$, the location of the maximum of $Q_{\lambda}$ implies
that $P_{\lambda}(\mathbf{b}|\htheta'_{m})\geq
P_{\lambda}(\mathbf{b}|\htheta_{m})=1$.  

We show that endpoint deviations from the $P_{\lambda}$ test are
typically a constant factor larger than those of the $\CH$ test. For
large $\alpha$, the factor approaches $1/\sqrt{1-\lambda}$,
approximating the endpoint deviations for a $\CH$ test with
$(1-\lambda)n$ trials. We begin by comparing
  $P_{\lambda}$ to $P_{\CH,(1-\lambda)n}$ with the latter applied to the
  last $(1-\lambda)n$ trials and $\varphi\leq\htheta'_{m}$,
  where we can use $Q_{\lambda}$ in place of $P_{\lambda}$.
\begin{equation}
  Q_{\lambda}(\mathbf{b}|\varphi)/P_{\CH,(1-\lambda)n}(\htheta'_{m}|\varphi)
  = \left(\frac{\htheta'_{m}}{\htheta_{m}}\right)^{(1-\lambda)n\htheta'_{m}}
  \left(\frac{1-\htheta'_{m}}{1-\htheta_{m}}\right)^{(1-\lambda)n(1-\htheta'_{m})}.
\end{equation}
Or, for the $\log(p)$-value difference $l_{p}$,
\begin{equation}
  l_{p} = 
   -\log(Q_{\lambda}(\mathbf{b}|\varphi))+\log(P_{\CH,(1-\lambda)n}(\htheta'_{m}|\varphi))
  = -(1-\lambda)n\kl(\htheta'_{m}|\htheta_{m}),
\end{equation}
which is non-positive.  By expanding to second order,
\begin{align}
  \kl(\htt+x|\htt+y) &=
   (\htt+x)\left(\log(1+x/\htt)-\log(1+y/\htt)\right)
   \notag\\
   &\hphantom{=\;\;}
       + (1-\htt-x)\left(
         \log(1-x/(1-\htt))-\log(1-y/(1-\htt))\right)\notag\\
   &=
   \frac{(x-y)^{2}}{2\htt(1-\htt)}
   + O(\max(|x|,|y|)^{3}).
\end{align}
Let $\Delta = \Htheta_{m}-\theta$ and $\Delta'=\Htheta'_{m}-\theta$. From
the above expansion with $\htt=\theta$, $x=\delta'$ and $y=\delta$ (where
$\delta$ and $\delta'$ are values of $\Delta$ and $\Delta'$)
\begin{align}
  l_{p}&= -(1-\lambda)n\left(
    \frac{(\delta-\delta')^{2}}{2\theta(1-\theta)}+O(\max(|\delta|,|\delta'|^{3}))\right).
\end{align}
The RVs $\Delta$ and $\Delta'$ are independent with means $0$ and
variances $\sigma^{2}/\lambda$ and
$\sigma^{2}/(1-\lambda)$. Furthermore, $\sqrt{n}\Delta$ and
$\sqrt{n}\Delta'$ are asymptotically normal with variances
$\theta(1-\theta)/\lambda$ and $\theta(1-\theta)/(1-\lambda)$.
Consequently, the RV $\sqrt{n}(\Delta-\Delta')$ is asymptotically
normal with variance
$v=\theta(1-\theta)/(\lambda(1-\lambda))$. Accordingly, the
probability that
$n(\Delta-\Delta')^{2}\ge\kappa^{2}\theta(1-\theta)/(\lambda(1-\lambda))$
is asymptotically given by the two-sided tail for $\kappa$ standard
deviations of the standard normal.  For determining typical behavior,
we consider
$(\delta-\delta')^{2}=\kappa^{2}\theta(1-\theta)/(n\lambda(1-\lambda))$
with $\kappa\geq 0$ constant for asymptotic purposes.  Observe that
$n\Delta^{3}$ and $n\Delta'^{3}$ are $\tilde O(1/\sqrt{n})$ with
probability $1$, where the ``soft-O'' notation $\tilde O$ subsumes the
polylogarithmic factor from the law of the iterated logarithm. We can
now write
\begin{equation}
   l_{p} = -\frac{\kappa^{2}}{2\lambda} + \tilde O(1/\sqrt{n}).
\end{equation}
Fix the level $a$ and thereby also $\alpha=|\log(a)|$.  Define
$\hat\sigma'=\sqrt{\htheta'_{m}(1-\htheta'_{m})/(1-\lambda)n}$, and
let $\varphi'=\htheta'_{m}-\gamma'\hat\sigma'$ be the smallest
solution of $-\log(Q_{\lambda}(\mathbf{b}|\varphi'))=\alpha$.  Because
\begin{equation}
-\log(Q_{\lambda}(\mathbf{b}|\varphi')) = 
  -\log(P_{\CH,(1-\lambda)n}(\htheta'_{m}|\varphi'))+l_{p},
\end{equation}
we can estimate $\gamma'$ as
$\gamma'=\gamma_{a',(1-\lambda)n}(\htheta'_{m};P_{\CH})=\sqrt{2(\alpha-l_{p})}+O(\alpha/\sqrt{n})$
with $a'=e^{-(\alpha-l_{p})}$. Here, the subscript $(1-\lambda)n$ of
$\gamma_{a'}$ makes the previously implicit number of trials explicit.

To finish our comparison, we express the endpoint $\varphi'$ relative to $\htheta$.
For this, we write
\begin{align}
\varphi' &=\htheta'_{m}-\gamma'\hat\sigma'\notag\\
  &=\htheta + (\htheta'_{m}-\htheta)
  -\gamma'\hat\sigma\sqrt{\frac{\htheta'_{m}(1-\htheta'_{m})}{(1-\lambda)\htheta(1-\htheta)}}\notag\\
  &= \htheta + (\htheta'_{m}-\htheta)
  -\frac{\gamma'}{\sqrt{1-\lambda}}\hat\sigma\left(1 + O(|\htheta-\htheta'_{m}|)\right).
\end{align}
We have $\htheta'_{m}-\htheta=
\lambda(\htheta'_{m}-\htheta_{m})=\lambda(\delta'-\delta)$, and we
are considering the case
$\lambda|\delta'-\delta|=\kappa\sqrt{\lambda\theta(1-\theta)/(n(1-\lambda))}$, so
\begin{equation}
  \varphi' = \htheta-\frac{\gamma'}{\sqrt{1-\lambda}}\hat\sigma\left(1 + O(1/\sqrt{n})\right).
\end{equation}
We can therefore identify
\begin{align}
  \gamma_{a}(\htheta|P_{\lambda})
  &=\frac{\gamma'}{\sqrt{1-\lambda}}(1 + O(1/\sqrt{n}))\notag\\
  &=\frac{\sqrt{2(\alpha+\kappa^{2}/(2\lambda)+\tilde O(1/\sqrt{n}))}+O(\alpha/\sqrt{n})}{\sqrt{1-\lambda}}(1 + O(1/\sqrt{n}))\notag\\
  &=\frac{\sqrt{2(\alpha+\kappa^{2}/(2\lambda))}}{\sqrt{1-\lambda}} + \tilde O(\alpha/\sqrt{n}),
\end{align}
which compares as promised to $\gamma_{a}(\htheta;P_{\CH})=\sqrt{2\alpha}+O(\alpha/\sqrt{n})$.

\section{Conclusion}
\label{s:conclusion}

It is clear that for the specific problem of one-sided hypothesis
testing and confidence intervals for Bernoulli RVs, it is always
preferable to use the exact test in the ideal case, where the
  trials are i.i.d. For general nulls, exact tests are typically not
available, so approximations are used. The approximations often
  do not take into account failure of underlying distributional
  assumptions. The approximation errors can be large at high
significance. Thus trustworthy alternatives such as those based on
large deviation bounds or test supermartingales are desirable. Our
goal here is not to suggest that these alternatives are better for the
example of Bernoulli RVs, but to determine the gap between them and an
exact test, in a case where an exact test is known and all tests are
readily calculable. The suggestion is that for high significance
applications, the gaps are relatively small on the relevant
logarithmic scale.  For $p$-values, they are within what is expected
from experiment-to-experiment variation, even for moderate
significances. For confidence intervals, the increase in size is
bounded by a constant if the number of trials is known ahead of time,
but there is a slowly growing cost with number of trials if we allow
for arbitrary stopping-rules.


\section{Appendix}
\label{s:app}

\subsection{Preliminaries}

Notation and definitions are as introduced in the text. The
$p$-value bounds obtained by the three tests investigated are
denoted by $P_{\X}$ for the exact, $P_{\CH}$ for the
Chernoff-Hoeffding, and $P_{\PBR}$ for the PBR test. They depend on
$n$, $\varphi$ and $\Htheta$. For reference, here are the
definitions again. 
\begin{align}
  P_{\X}(\Htheta|\varphi,n) &= \sum_{i=n\Htheta}^n
  \varphi^i(1-\varphi)^{n-i}
  \binom{n}{i},\notag\\
  P_{\CH}(\Htheta|\varphi) 
  &= \left\{ \begin{array}{ll}
      \left(\frac{\varphi}{\Htheta}\right)^{n\Htheta}\left(\frac{1-\varphi}{1-\Htheta}\right)^{n(1-\Htheta)} & \textrm{if $\Htheta\geq\varphi$,}\\
      1 &\textrm{otherwise.}
    \end{array}
  \right.  \notag\\
  P_{\PBR}(\Htheta|\varphi) &= \left\{ \begin{array}{ll}
      \varphi^{n\Htheta}(1-\varphi)^{n(1-\Htheta)}(n+1)\binom{n}{n\Htheta}
      & \textrm{if $\Htheta\geq\varphi$,}\\
      \Htheta^{n\Htheta}(1-\Htheta)^{n(1-\Htheta)}(n+1)\binom{n}{n\Htheta}&\textrm{otherwise.}
    \end{array}
  \right.
\end{align}
The gain per trial for a $p$-value bound $P_{n}$ is
$G_{n}(P_{n})=-\log(P_{n})/n$.  The values of $\varphi$, $\hat\theta$
and $\theta$ are usually constrained.  Unless otherwise stated, we assume that
$0<\varphi,\htheta,\theta<1$ and $n\geq 1$.

Most of this appendix is dedicated to obtaining upper and lower bounds
on $\log(p)$-values and lower bounds on endpoints of confidence
intervals.  We make sure that the upper and lower bounds differ by
quantities that converge to zero as $n$ grows. Their differences are
$O(1/n)$ for $\log(p)$-values and $O(1/\sqrt{n})$ for confidence lower
bounds.  We generally aim for simplicity when expressing these bounds,
so we do not obtain tight constants.

\subsection{Closed-Form Expression for $P_{\PBR}$}
\label{a:pbrexpression}

\begin{theorem} \label{t:pbrexpression}
  Define
  \begin{align}
    \widetilde{\Theta}_k &= \frac{1}{k+2}\left(S_{k} + 1\right),\notag\\
    F_{k+1} &= \left(\frac{\widetilde{\Theta}_{k}}{\varphi}\right)^{B_{k+1}}
    \left(\frac{1-\widetilde{\Theta}_{k}}{1-\varphi}\right)^{1-B_{k+1}}.
  \end{align}
  Then
  \begin{equation}
     \frac{1}{\prod_{k=1}^{n} F_{k}} = \varphi^{n\Htheta}(1-\varphi)^{n(1-\Htheta)}(n+1)\binom{n}{n\Htheta}.
    \label{e:pbrformula}
  \end{equation}
\end{theorem}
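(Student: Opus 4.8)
The plan is to take the reciprocal of the product $T_{n}=\prod_{k=1}^{n}F_{k}$ directly and show it collapses to the claimed closed form in two stages: first peeling off the $\varphi$-dependence, then evaluating the remaining product of the $\widetilde{\Theta}_{k}$ factors by a counting argument.

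First I would reindex $T_{n}=\prod_{k=0}^{n-1}F_{k+1}$ and split each factor into a part depending only on $\varphi$ and a part depending only on $\widetilde{\Theta}_{k}$. Since $\sum_{k=0}^{n-1}B_{k+1}=S_{n}=n\Htheta$ and $\sum_{k=0}^{n-1}(1-B_{k+1})=n(1-\Htheta)$, the $\varphi$-part multiplies to $\varphi^{-n\Htheta}(1-\varphi)^{-n(1-\Htheta)}$, giving
\begin{equation*}
\frac{1}{T_{n}}=\varphi^{n\Htheta}(1-\varphi)^{n(1-\Htheta)}\prod_{k=0}^{n-1}\widetilde{\Theta}_{k}^{-B_{k+1}}(1-\widetilde{\Theta}_{k})^{-(1-B_{k+1})},
\end{equation*}
so it remains to show the last product equals $(n+1)\binom{n}{n\Htheta}$.

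Next I would substitute $\widetilde{\Theta}_{k}=(S_{k}+1)/(k+2)$ and $1-\widetilde{\Theta}_{k}=(k+1-S_{k})/(k+2)$. Whether $B_{k+1}=1$ or $0$, each factor contributes a denominator $(k+2)$, so the denominators multiply to $\prod_{k=0}^{n-1}(k+2)=(n+1)!$, and the numerator is $\prod_{k=0}^{n-1}(S_{k}+1)^{B_{k+1}}(k+1-S_{k})^{1-B_{k+1}}$.

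The main step, and the only one requiring an idea rather than bookkeeping, is evaluating this numerator. The observation is that $S_{k}+1=S_{k+1}$ exactly when $B_{k+1}=1$, while $k+1-S_{k}$ equals the running count of zeros after trial $k+1$ exactly when $B_{k+1}=0$. Thus each trial contributes the \emph{updated} running count of whichever symbol it is: the $j$-th occurrence of a $1$ contributes the factor $j$, and likewise for the $0$s. Collecting these, the ones contribute $(n\Htheta)!$ and the zeros contribute $(n(1-\Htheta))!$, so the numerator equals $(n\Htheta)!\,(n(1-\Htheta))!$; this is cleanly formalized by induction on $n$, splitting on the value of $B_{n}$. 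Dividing by $(n+1)!$ and recalling $\binom{n}{n\Htheta}=n!/\big((n\Htheta)!\,(n(1-\Htheta))!\big)$ identifies the remaining product as $(n+1)!/\big((n\Htheta)!\,(n(1-\Htheta))!\big)=(n+1)\binom{n}{n\Htheta}$, which completes the proof.
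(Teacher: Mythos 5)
Your proof is correct, and it takes a genuinely different route from the paper's. The paper proves Eq.~\ref{e:pbrformula} by induction on $n$: it writes $P_k$ for the closed form at stage $k$ and verifies the recursion $P_{k+1}=P_{k}/F_{k+1}$ by expanding the binomial coefficient, so the argument is a mechanical check that the closed form and the product satisfy the same one-step update. You instead evaluate the product $1/\prod_k F_k$ directly in closed form: the $\varphi$-dependence factors out globally as $\varphi^{n\Htheta}(1-\varphi)^{n(1-\Htheta)}$ via $\sum_k B_{k+1}=n\Htheta$, the denominators $(k+2)$ collect into $(n+1)!$, and the key counting observation --- that the trial-$(k{+}1)$ factor is the updated running count of whichever symbol occurred, so the ones contribute $(n\Htheta)!$ and the zeros $(n(1-\Htheta))!$ --- yields $(n+1)!/\bigl((n\Htheta)!\,(n(1-\Htheta))!\bigr)=(n+1)\binom{n}{n\Htheta}$. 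The paper's induction is shorter and requires no idea beyond algebra; your decomposition is more illuminating, since it exposes \emph{why} the formula holds: the inverse product is the i.i.d.\ likelihood $\varphi^{S_n}(1-\varphi)^{n-S_n}$ divided by $1/\bigl((n+1)\binom{n}{S_n}\bigr)$, i.e.\ by the probability of the observed sequence under a uniform mixture over the success probability, which is precisely the connection to Ville's construction with $dF(t)=dt$ that the paper notes only after the theorem. Your deferral of the counting step to ``induction on $n$, splitting on $B_n$'' is legitimate, though if written out fully that sub-induction would be where the bookkeeping resides.
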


\begin{proof}
  The proof proceeds by induction. Write $P_{k}$ for the right-hand
  side of Eq.~\ref{e:pbrformula}.  For $n=0$, $P_{0}=1$, and the
  left-hand side of Eq.~\ref{e:pbrformula} evaluates to $1$ as
  required, with the usual convention that the empty product evaluates
  to $1$.

  Now suppose that Eq.~\ref{e:pbrformula} holds at trial $n=k$.  For
  $n=k+1$ we can use $(k+1)\Htheta_{k+1}=S_{k+1}=S_{k}+B_{k+1}$. 
  We expand the binomial expression to rewrite the right-hand side as
  \begin{align}
    P_{k+1}&=\varphi^{k\Htheta_{k}+B_{k+1}}(1-\varphi)^{k(1-\Htheta_{k})+(1-B_{k+1})}(k+2)
    \binom{k+1}{k\Htheta_{k}+B_{k+1}} \notag\\
    &=
    \varphi^{k\Htheta_{k}}(1-\varphi)^{k(1-\Htheta_{k})} (k+1)
    \binom{k}{k\Htheta_{k}} \notag\\
    &\hphantom{=\;\;}\cdot \varphi^{B_{k+1}}(1-\varphi)^{1-B_{k+1}}
    (k+2)(k-k\Htheta_{k}+1)^{-(1-B_{k+1})}(k\Htheta_{k}+1)^{-B_{k+1}}.
  \end{align}
  Since
  $\widetilde{\Theta}_{k}=(S_{k}+1)/(k+2)=(k\Htheta_{k}+1)/(k+2)$ and
  $1-\widetilde{\Theta}_{k}=(k-S_{k}+1)/(k+2)=(k-k\Htheta_{k}+1)/(k+2)$,
  the identity simplifies to
  \begin{equation}
    P_{k+1} = P_{k}\cdot \frac{1}{F_{k+1}},
  \end{equation}
  thus establishing the induction step.
\end{proof}

The expression in Eq.~\ref{e:pbrformula} can be seen as the inverse of
a positive martingale for $\cH_{0}=\{\nu_{\varphi}\}$ determined by $S_{n}$. The
complete family of such martingales was obtained by
Ville~\cite{ville:1939a}, Chapter 5, Sect. 3, Eq. 21.  Ours is
obtained from Ville's with $dF(t)=dt$ as the probability measure.

\subsection{Log-$p$-Value Approximations}

We use $-\log(P_{\CH,n}(\htt|\varphi))=n\kl(\htt|\varphi)$ as our
reference value. According to Thm.~\ref{t:pproperties}, the
$\log(p)$-values are ordered according to
$-\log(P_{\PBR})\leq-\log(P_{\CH})\leq-\log(P_{\X})$.  To express the
asymptotic differences between the $\log(p)$-values, we use auxiliary
functions. The first is
\begin{align}
  H_{n}(\htt) &= -\log\left(\htt^{n\htt}(1-\htt)^{n(1-\htt)}\binom{n}{n\htt}\sqrt{n+1}\right)\notag\\
    &= -n\htt\log(\htt)-n(1-\htt)\log(1-\htt)  - \log\binom{n}{n\htt} - \frac{1}{2}\log(n+1). \label{e:Hn}
\end{align}
The first two terms of this expression can be recognized as the
Shannon entropy of $n$ independent random bits, each with probability
$\htt$ for bit value $1$.  For $\htt\in [1/n,1-1/n]$ and
with Stirling's approximation $\sqrt{2\pi n}(n/e)^{n}e^{1/(12n+1)}\leq n!\leq
\sqrt{2\pi n}(n/e)^{n}e^{1/(12n)}$ applied to the binomial
coefficient, we get 
\begin{align}
  \log\binom{n}{nt} &= \log\left(\frac{n!}{(tn)!((1-t)n)!}\right)\notag\\
    &\in
    \log\left(\frac{\sqrt{2\pi n}}{\sqrt{2\pi tn}\sqrt{2\pi (1-t)n}}\right)
     + \log\left(\frac{(n/e)^{n}}{(tn/e)^{tn}((1-t)n/e)^{(1-t)n}}\right)\notag\\
    &\hphantom{=\;\;}
     + \left[\frac{1}{12n+1},\frac{1}{12n}\right] + \left[-\frac{1}{12tn}-\frac{1}{12(1-t)n},-\frac{1}{12tn+1}-\frac{1}{12(1-t)n+1}\right]
     \notag\\
  &= -\frac{1}{2}\log(2\pi t(1-t))-\frac{1}{2}\log(n)
    -tn\log(t)-(1-t)n\log(1-t)\notag\\
    &\hphantom{=\;\;}
     + \left[\frac{1}{12n+1}-\frac{1}{12t(1-t)n},\frac{1}{12n}-
                \frac{12n+2}{(12tn+1)(12(1-t)n+1)}\right].
\end{align}
We can increase the interval to simplify the bounds while preserving
convergence for large $n$. For the lower bound, we use
$-1/(12t(1-t)n)$. For the upper bound, note that $(12tn+1)(12(1-t)n+1)$
is maximized at $t=1/2$. We can therefore increase the upper bound
according to
\begin{equation}
\frac{1}{12n}-
                \frac{12n+2}{(12tn+1)(12(1-t)n+1)}
  \leq \frac{1}{12n}-\frac{2}{6n+1}\leq 0
\end{equation}
for $n\geq 1$.  From this we obtain the interval expression
\begin{align}
H_{n}(\htt) &
    \in \frac{1}{2}\log(2\pi\htt(1-\htt)) - \frac{1}{2}\log(1+1/n) + 
    \left[0,\frac{1}{12n\htt(1-\htt)}\right],
    \label{e:Hn:asymp}
\end{align}
valid for $t\in[1/n,1-1/n]$. The boundary values of $H_{n}$ at $t=0$
and $t=1$ are $-\log(n+1)/2$.

The next auxiliary function is
\begin{equation}
  Y(t)=\frac{1}{e^{-t^2/2}}\int_t^\infty e^{-s^2/2}ds\in \left(\frac{t}{1+t^2},\frac{1}{t}\right)\text{\ \ for\ }t>0,\label{e:defY}
\end{equation}
where the bounds are from Ref.~\cite{mckay}. See this reference for a
summary of all properties of $Y$ mentioned here, or
Ref.~\cite{patel_jk:qc1996a} for more details. The function $Y$ is
related to the tail of the standard normal distribution, the
$Q$-function, by $Q(t)=e^{-t^{2}/2}Y(t)/\sqrt{2\pi}$.  The function $Y$
is monotonically decreasing, convex, $Y(0)=\sqrt{\pi/2}$, and it
satisfies the differential equation $\frac{d}{dt}Y(t) = tY(t)-1$.  We
make use of the following bounds involving $Y$:
\begin{equation}
-\log tY(t)
\in  
\left[0,\frac{1}{t^{2}}\right].
\label{e:BndY}
\end{equation}
The lower bound comes from the upper bound $1/t$ for $Y(t)$.  The
upper bound is from the lower bound $t/(1+t^{2})$ for $Y(t)$. 
Specifically, we compute $-\log(Y(t))\leq -\log(t/(1+t^{2})) = \log(t) +
\log(1+1/t^{2})\leq \log(t)+1/t^{2}$. 

With these definitions, we can express the $\log(p)$-values in terms
of their difference from $-\log(P_{\CH})$.

\begin{theorem}\label{t:pbr-ch}
For $0<\varphi\leq\htt<1$,
\begin{align}
-\log(P_{\PBR,n}(\htt|\varphi)) &= -\log(P_{\CH,n}(\htt|\varphi)) 
                 -\frac{1}{2}\log(n+1) + H_{n}(\htt)\label{e:t:pbr-ch-hn}\\
   &\in -\log(P_{\CH,n}(\htt|\varphi)) -\frac{1}{2}\log(n+1) + \frac{1}{2}\log(2\pi\htt(1-\htt)) - \frac{1}{2}\log(1+1/n)
  \notag\\
  &\hphantom{\in\;\;}+
  \left[0,\frac{1}{12n\htt(1-\htt)},\right] \label{e:t:pbr-ch}
\end{align}
\end{theorem}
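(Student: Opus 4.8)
The plan is to deduce both displays directly from two facts already in hand: the closed-form ratio of $P_{\PBR,n}$ to $P_{\CH,n}$, and the Stirling-based interval bound for the auxiliary function $H_{n}$ in Eq.~\ref{e:Hn:asymp}. There is essentially no new estimation to perform; the work is in matching definitions and tracking the $\tfrac12\log(n+1)$ bookkeeping.

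First I would invoke the identity computed just before Thm.~\ref{t:pproperties}. Since we are in the regime $\varphi\leq\htt$, both $p$-values use their upper branch, and the common factor $\varphi^{n\htt}(1-\varphi)^{n(1-\htt)}$ cancels, leaving
\[
\frac{P_{\PBR,n}(\htt|\varphi)}{P_{\CH,n}(\htt|\varphi)} = \htt^{n\htt}(1-\htt)^{n(1-\htt)}(n+1)\binom{n}{n\htt}.
\]
Taking $-\log$ of both sides and comparing with the definition of $H_{n}$ in Eq.~\ref{e:Hn} then gives Eq.~\ref{e:t:pbr-ch-hn} immediately: writing $A=\htt^{n\htt}(1-\htt)^{n(1-\htt)}\binom{n}{n\htt}$, we have $H_{n}(\htt)=-\log A-\tfrac12\log(n+1)$, whereas the product above is $A\,(n+1)$, so
\[
-\log\!\left(A(n+1)\right) = -\log A-\log(n+1) = H_{n}(\htt)-\tfrac12\log(n+1).
\]
Combining the two displays yields $-\log(P_{\PBR,n}(\htt|\varphi)) = -\log(P_{\CH,n}(\htt|\varphi)) -\tfrac12\log(n+1) + H_{n}(\htt)$, which is the first assertion.

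To obtain the interval form in Eq.~\ref{e:t:pbr-ch}, I would substitute the bound Eq.~\ref{e:Hn:asymp} for $H_{n}(\htt)$ into Eq.~\ref{e:t:pbr-ch-hn}. The only point needing care, and thus the main (albeit minor) obstacle, is verifying the hypothesis $\htt\in[1/n,1-1/n]$ under which Eq.~\ref{e:Hn:asymp} was derived. This is forced by the standing assumptions together with integrality of $n\htt$: since $\htt>0$ and $n\htt$ is a positive integer we must have $n\htt\geq 1$, hence $\htt\geq 1/n$; and since $\htt<1$ with $n\htt$ integral we must have $n\htt\leq n-1$, hence $\htt\leq 1-1/n$. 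With the domain condition confirmed, substituting the interval for $H_{n}(\htt)$ produces the claimed inclusion term by term, completing the proof.
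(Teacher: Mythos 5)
Your proposal is correct and follows essentially the same route as the paper's proof: the paper likewise factors $P_{\PBR,n}$ into the $P_{\CH,n}$ part times $\htt^{n\htt}(1-\htt)^{n(1-\htt)}(n+1)\binom{n}{n\htt}$ (you simply cite the ratio identity from Sect.~\ref{s:bht} rather than re-deriving it), identifies the latter with $H_{n}(\htt)-\tfrac{1}{2}\log(n+1)$, and then substitutes the Stirling interval of Eq.~\ref{e:Hn:asymp}. Your explicit check that integrality of $n\htt$ forces $\htt\in[1/n,1-1/n]$ is a small point the paper leaves implicit, and it is a worthwhile addition.
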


\begin{proof}
The theorem is obtained by substituting definitions and then applying
the bounds of Eq.~\ref{e:Hn:asymp} on $H_{n}(t)$. Here are the details.
\begin{align}
-\log(P_{\PBR,n}(\htt|\varphi))
  &= -\log\left(\varphi^{n\htt}(1-\varphi)^{n(1-\htt)}(n+1)\binom{n}{n\htt}\right)
  \notag\\
  &= -\log\left(\left(\frac{\varphi}{\htt}\right)^{n\htt}\left(\frac{1-\varphi}{1-\htt}\right)^{n(1-\htt)}\right)\notag\\
  &\hphantom{=\;\;}
     -\log\left(\htt^{n\htt}(1-\htt)^{n(1-\htt))}(n+1)\binom{n}{n\htt}\right)\notag\\
  &=-\log(P_{\CH,n}(\htt|\varphi))-\frac{1}{2}\log(n+1)\notag\\
  &\hphantom{=\;\;}   -\log\left(\htt^{n\htt}(1-\htt)^{n(1-\htt))}\sqrt{n+1}\binom{n}{n\htt}\right)\notag\\
  &=-\log(P_{\CH,n}(\htt|\varphi))-\frac{1}{2}\log(n+1)+H_{n}(\htt).
\end{align}
It remains to substitute the interval expression for $H_{n}(\htt)$.
\end{proof}

\begin{theorem}\label{t:x-pbr}
  Define 
  \begin{equation}
     \lE_{n}(\htt|\varphi)
    =\min\left((\htt-\varphi)\sqrt{\frac{\pi n}{8\varphi(1-\varphi)}},1\right).
  \end{equation}
  Then for $0<\varphi<\htt<1$,
  \begin{align}
    -\log(P_{\X,n}(\htt|\varphi)) &\in
    -\log(P_{\PBR,n}(\htt|\varphi))+\log(n+1)-\log\left(\htt\sqrt{\frac{(1-\varphi)}{\varphi}}\right)
    \notag\\
    &\hphantom{{}={}\;\;}
    -\log\left(\sqrt{n}Y\left(\sqrt{\frac{n}{\varphi(1-\varphi)}}(\htt-\varphi)\right)\right)
    + \left[-\frac{\lE_{n}(\htt|\varphi)}{n(\htt-\varphi)},0\right],\label{e:t:x-pbr}\\
    -\log(P_{\X,n}(t|\varphi)) &\in -\log(P_{\CH,n}(t|\varphi))+\frac{1}{2}\log(n)
    -\log\left(\sqrt{\frac{\htt(1-\varphi)}{2\pi(1-\htt)\varphi}}\right)\notag\\
    &\hphantom{{}={}\;\;}
    -\log\left(\sqrt{n}Y\left(\sqrt{\frac{n}{\varphi(1-\varphi)}}(\htt-\varphi)\right)\right)
    + \left[-\frac{\lE_{n}(\htt|\varphi)}{n(\htt-\varphi)},\frac{1}{12n\htt(1-\htt)}\right].\label{e:t:x-ch}
  \end{align}
\end{theorem}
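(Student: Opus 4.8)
\emph{Reduction.} The plan is to prove Eq.~\ref{e:t:x-pbr} first and to obtain Eq.~\ref{e:t:x-ch} from it by adding the $P_{\PBR}$-to-$P_{\CH}$ relation of Thm~\ref{t:pbr-ch}. Substituting Eq.~\ref{e:t:pbr-ch-hn} together with the interval for $H_{n}(\htt)$ from Eq.~\ref{e:Hn:asymp} into Eq.~\ref{e:t:x-pbr} collapses the $n$-dependence through $\tfrac12\log(n+1)-\tfrac12\log(1+1/n)=\tfrac12\log(n)$ and recombines the remaining $\htt,\varphi$ factors into $-\log\sqrt{\htt(1-\varphi)/(2\pi(1-\htt)\varphi)}$, while the two error intervals $[0,\tfrac1{12n\htt(1-\htt)}]$ and $[-\lE_{n}/(n(\htt-\varphi)),0]$ simply add to produce the interval in Eq.~\ref{e:t:x-ch}. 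This step is routine algebra, so all the content sits in Eq.~\ref{e:t:x-pbr}. By Thm~\ref{t:pbrexpression} the leading ($i=n\htt$) term of the exact tail is $a:=\binom{n}{n\htt}\varphi^{n\htt}(1-\varphi)^{n(1-\htt)}=P_{\PBR,n}(\htt|\varphi)/(n+1)$, so Eq.~\ref{e:t:x-pbr} is equivalent to the multiplicative statement that $P_{\X,n}(\htt|\varphi)/a$ equals $\htt\sqrt{n(1-\varphi)/\varphi}\,Y(z)$ up to a factor in $[1,e^{\lE_{n}(\htt|\varphi)/(n(\htt-\varphi))}]$, where $z=\sqrt{n/(\varphi(1-\varphi))}\,(\htt-\varphi)$.

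\emph{Producing the $Y$-factor.} First I would replace the sum by the standard regularized incomplete-beta representation of the binomial tail,
\begin{equation}
  P_{\X,n}(\htt|\varphi)=n\htt\binom{n}{n\htt}\int_{0}^{\varphi}t^{n\htt-1}(1-t)^{n-n\htt}\,dt .
\end{equation}
Writing the integrand as $e^{h(t)}$ with $h(t)=(n\htt-1)\log t+(n-n\htt)\log(1-t)$, the exponent is concave with maximum at $t^{*}=(n\htt-1)/(n-1)\approx\htt>\varphi$, so on $[0,\varphi]$ one integrates the increasing left flank up to the endpoint $\varphi$. I would expand $h$ about $\varphi$, using $h'(\varphi)=(n(\htt-\varphi)-(1-\varphi))/(\varphi(1-\varphi))$ and $h''(\varphi)<0$, and complete the square; extending the resulting Gaussian integral through its full lower tail turns it into a $\sigma\,Y(\,\cdot\,)$ expression by the very definition of $Y$ in Eq.~\ref{e:defY}, which is precisely where $Y$ enters. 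Combining $e^{h(\varphi)}=\varphi^{n\htt-1}(1-\varphi)^{n-n\htt}$ with the prefactor $n\htt\binom{n}{n\htt}$ and the width factor reproduces $a\cdot\htt\sqrt{n(1-\varphi)/\varphi}\,Y(z)$. Two consistency checks guide the bookkeeping: in the large-deviation limit $Y(z)\sim 1/z$ recovers the elementary endpoint estimate $P_{\X,n}/a\to\htt(1-\varphi)/(\htt-\varphi)$, and at $\htt=\varphi$ one has $z=0$, $Y(0)=\sqrt{\pi/2}$, giving $P_{\X,n}/a\to\sqrt{\pi n\varphi(1-\varphi)/2}$, consistent with $P_{\X,n}\approx\tfrac12$ and $a\approx1/\sqrt{2\pi n\varphi(1-\varphi)}$ there.

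\emph{Explicit error control (the hard part).} The real work is converting this heuristic into the two-sided interval with explicit constants, uniformly over $\varphi<\htt<1$. The subtle point is that a naive second-order Laplace expansion would place inside $Y$ the argument $h'(\varphi)/\sqrt{-h''(\varphi)}$ and the width $1/\sqrt{-h''(\varphi)}$, which differ from $z$ and from $\sqrt{\varphi(1-\varphi)/n}$ at relative order $O(\htt-\varphi)$; these first-order discrepancies must be shown to \emph{cancel} — through the scaling structure $Y(t)\sim1/t$ and the differential equation $\tfrac{d}{dt}Y=tY-1$ — so that only the genuinely small remainder survives. I expect this cancellation, carried out uniformly, to be the main obstacle. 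Two sources feed the final interval: the cubic-and-higher Taylor remainder of $h$ (which, since $h''$ blows up as $t\to0$, cannot be bounded uniformly on all of $[0,\varphi]$ and so forces localizing to a neighborhood of $\varphi$ carrying essentially all the mass), and the replacement of $\int_{0}^{\varphi}$ by the full Gaussian tail used to express the answer through $Y(z)$. One of these contributions is sign-definite and yields the clean endpoint $0$ of the interval $[-\lE_{n}/(n(\htt-\varphi)),0]$, i.e.\ a clean inequality between $P_{\X,n}$ and $a\,\htt\sqrt{n(1-\varphi)/\varphi}\,Y(z)$, while the opposite bound is controlled by $\lE_{n}(\htt|\varphi)/(n(\htt-\varphi))$. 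The most delicate regime is the crossover $\htt\to\varphi$, where $z\to0$ and the large-deviation picture degenerates into the central-limit one; here the correction must be capped, which is the role of the $\min(\,\cdot\,,1)$ and of the factor $(\htt-\varphi)\sqrt{\pi n/(8\varphi(1-\varphi))}=z\sqrt{\pi/8}$ in $\lE_{n}$, with $\sqrt{\pi/8}=Y(0)/2$ reflecting the $Y(0)=\sqrt{\pi/2}$ tail bound on the discarded range. Throughout I would use the explicit two-sided bounds on $Y$ from Eq.~\ref{e:BndY} and the Stirling bounds already established for Eq.~\ref{e:Hn:asymp} to keep every constant explicit, aiming for simplicity rather than tightness.
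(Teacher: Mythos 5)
Your reduction of Eq.~\ref{e:t:x-ch} to Eq.~\ref{e:t:x-pbr} via Thm.~\ref{t:pbr-ch} is correct and is exactly what the paper does, including the cancellation $\log(n+1)-\tfrac12\log(n+1)-\tfrac12\log(1+1/n)=\tfrac12\log(n)$ and the addition of the two error intervals; your restatement of Eq.~\ref{e:t:x-pbr} in multiplicative form, $P_{\X,n}/a=\htt\sqrt{n(1-\varphi)/\varphi}\,Y(z)\cdot e^{c}$ with $c\in[0,\lE_{n}(\htt|\varphi)/(n(\htt-\varphi))]$ and $a=P_{\PBR,n}/(n+1)$, is also exactly right. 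The gap is in how the first identity gets proved, which is where all the content lies. The paper does not derive the binomial-tail-to-$Y$ relation at all: it invokes Theorem 2 of Ref.~\cite{mckay}, which states (with $\sigma=\sqrt{n\varphi(1-\varphi)}$ and $x=\sqrt{n}(\htt-\varphi)/\sqrt{\varphi(1-\varphi)}$) that the upper tail equals $\sigma\,b(n\htt-1;n-1,\varphi)\,Y(x)\,e^{E/\sigma}$ with $0\le E\le\min\bigl(\sqrt{\pi/8},1/x\bigr)$; the interval $[-\lE_{n}(\htt|\varphi)/(n(\htt-\varphi)),0]$ is literally $-E/\sigma$ rewritten, and the rest of the paper's proof is the same bookkeeping you describe, converting $\sigma\,b(n\htt-1;n-1,\varphi)$ into $a\,\htt\sqrt{n(1-\varphi)/\varphi}$. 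You, by contrast, undertake to prove this estimate from scratch via the incomplete-beta representation and an endpoint Laplace expansion --- and then stop precisely where the theorem begins. Everything you label ``the hard part'' (the uniform cancellation of the first-order discrepancies between $h'(\varphi)/\sqrt{-h''(\varphi)}$ and $z$, the sign-definiteness that yields the clean endpoint $0$, and the derivation of the specific cap $\min\bigl((\htt-\varphi)\sqrt{\pi n/(8\varphi(1-\varphi))},1\bigr)$) \emph{is} the statement to be proved: a two-sided bound with these explicit constants. Saying you ``expect'' the cancellation and that one contribution ``is sign-definite'' establishes neither fact, so as it stands the proposal proves only the (routine) reduction step.

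To close the gap you have two options. Either do what the paper does and cite a known sharp two-sided bound for the binomial tail in terms of the Mills-ratio function $Y$ of Eq.~\ref{e:defY} (Ref.~\cite{mckay}, Thm.~2 is tailor-made: its error term maps onto $\lE_{n}$ with no further work), after which your algebra finishes the proof; or actually carry out the endpoint Laplace analysis with explicit remainder tracking, which amounts to reproving that cited theorem. Your sketch is a sensible plan for the latter --- the beta-integral representation is correct, as are both consistency checks ($Y(z)\sim1/z$ giving $\htt(1-\varphi)/(\htt-\varphi)$, and the $\htt=\varphi$ limit giving $\sqrt{\pi n\varphi(1-\varphi)/2}$) --- but none of the required two-sided estimates, nor the identification of the constant $\sqrt{\pi/8}$, is actually in place, and obtaining them uniformly down to the crossover $\htt\to\varphi$ is a substantial piece of analysis, not bookkeeping.
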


Observe that $\lE_{n}(t|\varphi)$ is $O(1)$ with respect to $n$ for
$t>\varphi$ constant.  The first term in the defining minimum is
smaller than $1$ only for $\varphi$ within less than one standard
deviation (which is $O(1/\sqrt{n})$) of $t$. It is defined so that the
primary dependence on the parameters is visible in the interval
bounds.

\begin{proof}
  For approximating $P_{\X}$, we apply Thm. 2 of Ref.~\cite{mckay}
  with the following sequence of substitutions, the first four of which
  expand the definitions in the reference:
  \begin{align}
    B(k;n,p)&\leftarrow\sum_{j=k}^{n}b(j;n,p),\notag\\
    b(k-1;n-1,p)&\leftarrow \binom{n-1}{k-1}p^{k-1}(1-p)^{n-k},\notag\\
    x&\leftarrow (k-pn)/\sigma,\notag\\
    \sigma&\leftarrow\sqrt{np(1-p)},\notag\\
    p&\leftarrow\varphi,\notag\\
    k&\leftarrow n\htt.
  \end{align}
  With the given substitutions and $Y(t)$ as defined by
  Eq.~\ref{e:defY}, we obtain for $\htt\geq\varphi$,
  \begin{align}
    -\log(P_{\X}) &\in -\log\left(\sqrt{n\varphi(1-\varphi)}\varphi^{n\htt-1}(1-\varphi)^{n(1-\htt)}\binom{n-1}{n\htt-1}\right) \notag\\
    &\hphantom{{}={}\;\;} -\log\left(Y\left(\frac{\sqrt{n}(\htt-\varphi)}{\sqrt{\varphi(1-\varphi)}}\right)\right) 
    + \left[-\frac{\lE_{n}(\htt|\varphi)}{n(\htt-\varphi)},0\right]
    \notag\\
    &=-\log\left(\varphi^{n\htt}(1-\varphi)^{n(1-\htt)}(n+1)\binom{n}{n\htt}\right)-\log\left(\frac{n\htt\sqrt{n\varphi(1-\varphi)}}{\varphi n(n+1)}\right) \notag\\
    &\hphantom{{}={}\;\;} -\log\left(Y\left(\frac{\sqrt{n}(\htt-\varphi)}{\sqrt{\varphi(1-\varphi)}}\right)\right) + \left[-\frac{\lE_{n}(\htt|\varphi)}{n(\htt-\varphi)},0\right]
    \notag\\
    &=-\log(P_{\PBR})+\log(n+1)-\log\left(\htt\sqrt{\frac{(1-\varphi)}{\varphi}}\right) \notag\\
    &\hphantom{{}={}\;\;} -\log\left(\sqrt{n}Y\left(\frac{\sqrt{n}(\htt-\varphi)}{\sqrt{\varphi(1-\varphi)}}\right)\right)
    + \left[-\frac{\lE_{n}(\htt|\varphi)}{n(\htt-\varphi)},0\right].
  \end{align}

  The second identity of the theorem follows by substituting the expression from
  Thm.~\ref{t:pbr-ch}.
\end{proof}

\MKc{Permanent comment to make the connection to Thm.~2~\cite{mckay}:

  The referenced theorem says: Let $0 < p < 1$, $n \ge 1$, and $pn \le k \le n$. Define
  $x = (k - pn)/\sigma$. Then 
  \begin{equation*}
    B(k; n, p) = \sigma b(k - 1; n - 1, p)Y (x) e^{
    E(k; n, p)/\sigma},
  \end{equation*}
  where $0 \le E(k; n, p) \le \min( \sqrt{\pi/8}, 1/x)$.
  We need the $1/x$ part of $E(k;n,p)$ here for asymptotic normality
  of the $\log(p)$-value differences.
}

We can eliminate the function $Y$ from the expressions by applying the
bounds from Eq.~\ref{e:BndY}.

\begin{corollary}\label{cor:x-ch}
  With the assumptions of Thm.~\ref{t:x-pbr},
  \begin{align}
    -\log(P_{\X,n}(t|\varphi)) &\in
    -\log(P_{\CH,n}(t|\varphi))+\frac{1}{2}\log(n)
    -\log\left(\frac{1-\varphi}{\htt-\varphi}\sqrt{\frac{\htt}{2\pi(1-\htt)}}\right)\notag\\
    &\hphantom{{}={}\;\;}
    +
    \left[-\frac{\lE_{n}(\htt|\varphi)}{n(\htt-\varphi)},\frac{\varphi(1-\varphi)}{(\htt-\varphi)^{2}n}+\frac{1}{12 n\htt(1-\htt)}\right].
  \end{align}
\end{corollary}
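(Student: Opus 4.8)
The plan is to start from the second interval identity of Theorem~\ref{t:x-pbr} (Eq.~\ref{e:t:x-ch}) and to eliminate the auxiliary function $Y$ using the two-sided bound $-\log(tY(t))\in[0,1/t^{2}]$ recorded in Eq.~\ref{e:BndY}. The only substantive step is to recast the term $-\log(\sqrt{n}\,Y(\cdot))$ that appears in Eq.~\ref{e:t:x-ch} into a form to which that bound applies, after which the result follows by combining logarithms and adding intervals.

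First I would set $t=\sqrt{n/(\varphi(1-\varphi))}\,(\htt-\varphi)$, which is strictly positive under the hypothesis $\varphi<\htt$, so the bounds of Eq.~\ref{e:BndY} are valid. Factoring out $t$ via $\sqrt{n}\,Y(t)=(\sqrt{n}/t)\,tY(t)$ gives
\begin{equation*}
  -\log\bigl(\sqrt{n}\,Y(t)\bigr)=-\log(\sqrt{n}/t)-\log(tY(t)).
\end{equation*}
Since $\sqrt{n}/t=\sqrt{\varphi(1-\varphi)}/(\htt-\varphi)$ and $1/t^{2}=\varphi(1-\varphi)/\bigl(n(\htt-\varphi)^{2}\bigr)$, the bound of Eq.~\ref{e:BndY} yields
\begin{equation*}
  -\log\bigl(\sqrt{n}\,Y(t)\bigr)\in
  -\log\!\left(\frac{\sqrt{\varphi(1-\varphi)}}{\htt-\varphi}\right)
  +\left[0,\frac{\varphi(1-\varphi)}{n(\htt-\varphi)^{2}}\right].
\end{equation*}

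Substituting this into Eq.~\ref{e:t:x-ch}, the two explicit logarithmic terms merge: taking the product of their arguments and cancelling the factors of $\varphi$,
\begin{equation*}
  \sqrt{\frac{\htt(1-\varphi)}{2\pi(1-\htt)\varphi}}\cdot\frac{\sqrt{\varphi(1-\varphi)}}{\htt-\varphi}
  =\frac{1-\varphi}{\htt-\varphi}\sqrt{\frac{\htt}{2\pi(1-\htt)}},
\end{equation*}
which reproduces the single logarithm in the corollary. Adding the new interval $[0,\varphi(1-\varphi)/(n(\htt-\varphi)^{2})]$ to the interval $[-\lE_{n}(\htt|\varphi)/(n(\htt-\varphi)),\,1/(12n\htt(1-\htt))]$ already carried in Eq.~\ref{e:t:x-ch} gives precisely the interval in the statement, completing the argument.

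I do not expect a genuine obstacle here: the corollary is a mechanical consequence of Theorem~\ref{t:x-pbr} together with the $Y$-bound. The one place that demands care is the simplification of the product of the two square roots into the compact form $\tfrac{1-\varphi}{\htt-\varphi}\sqrt{\htt/(2\pi(1-\htt))}$, where the $\varphi$ and $(1-\varphi)$ factors must cancel exactly; an exponent or sign slip there would corrupt the stated constant.
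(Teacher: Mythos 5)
Your proposal is correct and follows essentially the same route as the paper's own proof: the paper likewise writes the argument of $Y$ as $c\sqrt{n}$ with $c=(\htt-\varphi)/\sqrt{\varphi(1-\varphi)}$, splits off $\log(c)$ so that Eq.~\ref{e:BndY} applies to $-\log(c\sqrt{n}\,Y(c\sqrt{n}))\in[0,1/(c^{2}n)]$, and then substitutes into Eq.~\ref{e:t:x-ch}. Your explicit verification of the merged logarithm $\tfrac{1-\varphi}{\htt-\varphi}\sqrt{\htt/(2\pi(1-\htt))}$ is the step the paper compresses into ``simplifying the expression,'' and it checks out.
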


\begin{proof}
Define $c=(\htt-\varphi)/\sqrt{\varphi(1-\varphi)}$.
In view of Eq.~\ref{e:BndY}, we have
\begin{align}
-\log\left(\sqrt{n}Y\left(\sqrt{\frac{n}{\varphi(1-\varphi)}}(\htt-\varphi)\right)\right) &=
  -\log(\sqrt{n}Y(c\sqrt{n}))\notag\\
   &= \log(c)-\log(c\sqrt{n}Y(c\sqrt{n}))\notag\\
   &\in \log(c)+\left[0,\frac{1}{c^{2}n}\right].
\end{align}
Substituting in Eq.~\ref{e:t:x-ch} and simplifying the expression
gives the desired result.
\end{proof}

\subsection{Asymptotic Normality of the $\log(p)$-Values and Their
  Differences}

The main tool for establishing the asymptotic distribution of the
$\log(p)$-values is the ``delta method''. A version sufficient for our
purposes is Thm. 1.12 and Cor. 1.1 of Ref.~\cite{Shao}.  The notation
$X_{n}\xrightarrow{D}N(\mu,\sigma^{2})$ means that $X_{n}$ converges
in distribution to the normal distribution with mean $\mu$ and
variance $\sigma^{2}$.  By the central limit theorem,
$\Htheta_{n}=S_{n}/n$ satisfies
$\sqrt{n}(\Htheta_{n}-\theta)\xrightarrow{D}
N(0,\theta(1-\theta))$. An application of the delta method therefore
yields the next lemma.

\begin{lemma}\label{ANlemma}
  Let $F:\mathbb{R}\rightarrow\mathbb{R}$ be
  differentiable at $\theta$, with $F'(\theta)\neq
  0$. Then
  \begin{equation*}
    \sqrt{n}(F(\Htheta_{n})-F(\theta))\xrightarrow{D}
    N\left(0,F'(\theta)^{2}\theta(1-\theta)\right)
  \end{equation*}
\end{lemma}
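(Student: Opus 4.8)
The plan is to apply the delta method to the sequence $\Htheta_{n}$, using the central limit theorem stated immediately above the lemma, namely $\sqrt{n}(\Htheta_{n}-\theta)\xrightarrow{D}N(0,\theta(1-\theta))$. This is precisely the input required by Thm.~1.12 and Cor.~1.1 of Ref.~\cite{Shao}, with limiting variance $\theta(1-\theta)$, so in principle the conclusion follows by direct citation. Nonetheless I would spell out the underlying argument to make the role of the differentiability hypothesis explicit, since $F$ is assumed differentiable only at the single point $\theta$.

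First I would use differentiability of $F$ at $\theta$ to write, for $\htheta$ near $\theta$,
\begin{equation*}
  F(\htheta)-F(\theta)=F'(\theta)(\htheta-\theta)+R(\htheta),
\end{equation*}
where the remainder satisfies $R(\htheta)/(\htheta-\theta)\to 0$ as $\htheta\to\theta$. Substituting $\htheta=\Htheta_{n}$ and multiplying by $\sqrt{n}$ yields
\begin{equation*}
  \sqrt{n}\bigl(F(\Htheta_{n})-F(\theta)\bigr)=F'(\theta)\,\sqrt{n}(\Htheta_{n}-\theta)+\sqrt{n}\,R(\Htheta_{n}).
\end{equation*}
By the central limit theorem together with the scaling (continuous mapping) property, the first term converges in distribution to $N\bigl(0,F'(\theta)^{2}\theta(1-\theta)\bigr)$, which is the claimed limit.

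The key remaining step, and the main technical point, is to show that the remainder is negligible, i.e.\ $\sqrt{n}\,R(\Htheta_{n})\to 0$ in probability. I would factor it as $\sqrt{n}\,R(\Htheta_{n})=\sqrt{n}(\Htheta_{n}-\theta)\cdot\bigl(R(\Htheta_{n})/(\Htheta_{n}-\theta)\bigr)$. Here $\sqrt{n}(\Htheta_{n}-\theta)$ is tight, being convergent in distribution, while $\Htheta_{n}\to\theta$ in probability (by the weak law of large numbers, or equivalently by Slutsky applied to the CLT), so the second factor tends to $0$ in probability via the continuous mapping theorem applied to $\htheta\mapsto R(\htheta)/(\htheta-\theta)$ extended continuously by $0$ at $\theta$. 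The product of a tight sequence with an $o_{P}(1)$ sequence is $o_{P}(1)$, and Slutsky's theorem then combines the two terms to give the stated convergence. The hard part will be exactly this remainder control: because differentiability is assumed only at $\theta$, one cannot invoke a uniform derivative bound on a neighborhood, so the argument must proceed through tightness of $\sqrt{n}(\Htheta_{n}-\theta)$ and the pointwise ratio $R(\htheta)/(\htheta-\theta)$ rather than through smoothness of $F$ near $\theta$.
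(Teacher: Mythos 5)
Your proposal is correct and takes essentially the same route as the paper: the paper proves the lemma in one line by feeding the CLT statement $\sqrt{n}(\Htheta_{n}-\theta)\xrightarrow{D}N(0,\theta(1-\theta))$ into the delta method as given in Thm.~1.12 and Cor.~1.1 of Ref.~\cite{Shao}. Your additional remainder analysis (tightness of $\sqrt{n}(\Htheta_{n}-\theta)$ times the $o_{P}(1)$ ratio, combined via Slutsky) is exactly the standard proof of that cited delta method, so it is a valid and complete elaboration of what the paper delegates to the reference.
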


\begin{theorem}\label{thm:gain_anorm}
  For $P=P_{\CH}$, $P=P_{\PBR}$ or $P=P_{\X}$, and $0<\varphi<\theta<1$ constant,
  the gain per trial $G_{n}(P)$ converges in distribution
  according to
  \begin{equation}
    \sqrt{n}(G_{n}(P)-\kl(\theta|\varphi))\xrightarrow{D}
    N(0,\sigma^{2}_{G}),
  \end{equation}
  with 
  \begin{equation*}
    \sigma_{G}^2= \theta(1-\theta)\left(\log\left(\frac{\theta}{1-\theta}\frac{1-\varphi}{\varphi}\right)\right)^2.
  \end{equation*}
\end{theorem}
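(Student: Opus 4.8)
The plan is to reduce all three cases to a single application of the delta method (Lemma~\ref{ANlemma}) with the smooth function $F(t)=\kl(t|\varphi)$, and to treat the differences among the three tests as asymptotically negligible perturbations that do not change the limiting distribution. The unifying observation is that $-\log(P_{\CH,n})/n=\kl(\Htheta|\varphi)$ exactly (for $\Htheta\geq\varphi$), so the $\CH$ gain is literally $F(\Htheta)$, and Theorem~\ref{t:logp-diffs} expresses the $\PBR$ and $\X$ gains as $\kl(\Htheta|\varphi)$ plus terms of size $O(\log(n)/n)$.

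First I would dispose of $P_{\CH}$. Since $\theta>\varphi$, the law of large numbers (or a large-deviation estimate) gives $\Prob(\Htheta<\varphi)\to 0$, so the boundary case where $G_n(P_{\CH})=0$ rather than $\kl(\Htheta|\varphi)$ occurs with vanishing probability and does not affect convergence in distribution. A direct computation gives $F'(\theta)=\log\!\left(\frac{\theta}{1-\theta}\frac{1-\varphi}{\varphi}\right)$, which is nonzero for $\varphi\neq\theta$, so Lemma~\ref{ANlemma} applies and yields limiting variance $F'(\theta)^{2}\theta(1-\theta)$. This is exactly $\sigma_{G}^{2}$, settling the $\CH$ case.

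For $P_{\PBR}$ and $P_{\X}$ I would write $G_{n}(P)=\kl(\Htheta|\varphi)+R_{n}(\Htheta)/n$, where $R_{n}(\Htheta)=-\log(P)+\log(P_{\CH,n})$ is the $\log(p)$-value difference from Theorem~\ref{t:logp-diffs}. Then
\[
\sqrt{n}\bigl(G_{n}(P)-\kl(\theta|\varphi)\bigr)=\sqrt{n}\bigl(F(\Htheta)-F(\theta)\bigr)+\frac{R_{n}(\Htheta)}{\sqrt{n}}.
\]
The first term converges to $N(0,\sigma_{G}^{2})$ exactly as in the $\CH$ case. For the remainder, $R_{n}(\Htheta)=\pm\tfrac12\log n+O(1)$, where the $O(1)$ part depends on $\Htheta$ only through bounded quantities such as $\log(2\pi\Htheta(1-\Htheta))$ and, for $P_{\X}$, $\log\bigl((1-\varphi)/(\Htheta-\varphi)\bigr)$; hence $R_{n}(\Htheta)/\sqrt{n}=O(\log(n)/\sqrt{n})$ is $o_{P}(1)$, and Slutsky's theorem leaves the limiting distribution unchanged.

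The hard part will be the uniformity: the error terms in Theorem~\ref{t:logp-diffs} are stated for fixed $t>\varphi$ and blow up as $t\to\varphi$, so they cannot be applied verbatim to the random argument $\Htheta$. I would handle this with a confinement argument using the explicit interval bounds of Theorems~\ref{t:pbr-ch} and~\ref{t:x-pbr} and Corollary~\ref{cor:x-ch}, which are uniform on compact subsets of $(\varphi,1)$. Choosing $\epsilon>0$ with $[\theta-\epsilon,\theta+\epsilon]\subset(\varphi,1)$, we have $\Prob(\Htheta\notin[\theta-\epsilon,\theta+\epsilon])\to 0$, and on the complementary event $R_{n}(\Htheta)$ is uniformly $\pm\tfrac12\log n+O(1)$, so $R_{n}(\Htheta)/\sqrt{n}\to 0$ in probability. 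This restriction to a neighborhood of $\theta$, rather than the delta method itself, is the only delicate step.
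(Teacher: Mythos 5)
Your proposal is correct and follows essentially the same route as the paper: the delta method (Lem.~\ref{ANlemma}) with $F(x)=\kl(x|\varphi)$ for the $\CH$ case, followed by reducing the $\PBR$ and $\X$ cases to it by showing the scaled $\log(p)$-value differences converge in probability to zero via Thm.~\ref{t:pbr-ch} and Cor.~\ref{cor:x-ch} together with the law of large numbers (your explicit confinement argument is exactly what the paper's invocation of the law of large numbers is doing implicitly), and then a Slutsky-type result. Your extra care about the boundary event $\{\Htheta<\varphi\}$ for $P_{\CH}$ is a minor refinement the paper leaves implicit, but it does not change the substance of the argument.
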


\begin{proof} 
  Consider $P=P_{\CH}$ first. In Lem.~\ref{ANlemma},
  define $F(x)= \kl(x|\varphi) = x\log(x/\varphi) +
  (1-x)\log((1-x)/(1-\varphi))$ so that
  $F(\Htheta_{n})=G_{n}(P_{\CH})$. For the derivative of $F$ at $x=\theta$, we
  get
  \begin{equation}
    F'(\theta)=\log\left(\frac{\theta}{1-\theta}\frac{1-\varphi}{\varphi}\right).
  \end{equation}
  The theorem now follows for $P_{\CH}$ by applying Lem.~\ref{ANlemma}.

  Thm.~\ref{t:pbr-ch} and the law of large numbers imply that
  $(-\log(P_{\PBR})/\sqrt{n})-(-\log(P_{\CH})/\sqrt{n})$ converges in
  probability to $0$. Cor.~\ref{cor:x-ch} implies the same for
  $P_{\X}$, namely that
  $(-\log(P_{X})/\sqrt{n})-(-\log(P_{\CH})/\sqrt{n})$ converges in
  probability to $0$. In general, if $X_{n}-Y_{n}$ converges in
  probability to $0$ and $Y_{n}\xrightarrow{D} \mu$, then
  $X_{n}\xrightarrow{D}\mu$, see Ref.~\cite{brockwell:1991a},
  Prop. 6.3.3. The statement of the theorem to be proven now follows
  for $P=P_{\PBR}$ and $P=P_{\X}$ by comparison of
  $\sqrt{n}G_{n}(P_{\PBR})$ and $\sqrt{n}G_{n}(P_{\X})$ to
  $\sqrt{n}G_{n}(P_{\CH})$.
\end{proof}

The differences of the $\log(p)$-values have much tighter
distributions.  They are also asymptotically normal with scaling and
variances given in the next theorem. The differences are
$\Omega(\log(n))$ with standard deviations $O(1/\sqrt{n})$.

\begin{theorem}\label{thm:gap_anorm}
  Let $0<\varphi<\theta<1$ be constant. If $\theta\not=1/2$, then
  $P_{\PBR}/(\sqrt{n}P_{\CH})$ satisfies
  \begin{align}
    -\sqrt{n}\log\left(\frac{\sqrt{2\pi\theta(1-\theta)}P_{\PBR}}{\sqrt{n}P_{\CH}}\right) &\xrightarrow{D} 
    N\left(0,\frac{(1-2\theta)^{2}}{4\theta(1-\theta)}\right).
    \label{e:t:anpbr-ch}
  \end{align}
  If $\varphi\not=\theta(2\theta-1)$, then $\sqrt{n}P_{\X}/P_{\CH}$ satisfies
  \begin{align}
    -\sqrt{n}\log\left(\frac{\theta-\varphi}{1-\varphi}\sqrt{\frac{2\pi(1-\theta)}{\theta}}\frac{\sqrt{n}P_{\X}}{P_{\CH}}\right)
    &\xrightarrow{D}
    N\left(0,\frac{(\theta(1-2\theta)+\varphi)^{2}}{4(\theta-\varphi)^{2}\theta(1-\theta)}\right),\label{e:t:anx-ch}
  \end{align}  
\end{theorem}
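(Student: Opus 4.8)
The plan is to reduce both statements to the delta method of Lemma~\ref{ANlemma}, exactly as in the proof of Thm.~\ref{thm:gain_anorm}, by identifying in each case a smooth function $F$ for which the bracketed quantity equals $F(\Htheta_n)-F(\theta)$ up to terms that vanish after multiplication by $\sqrt{n}$. The precise expansions of the $\log(p)$-value differences are already available: Thm.~\ref{t:pbr-ch} controls $-\log(P_{\PBR})+\log(P_{\CH})$ and Cor.~\ref{cor:x-ch} controls $-\log(P_{\X})+\log(P_{\CH})$, in both cases up to an interval of width $O(1/n)$ for fixed $\htt$ bounded away from $\varphi$ and from the endpoints $0,1$.

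For the $\PBR$ statement I would substitute the identity of Thm.~\ref{t:pbr-ch} into $-\log\bigl(\sqrt{2\pi\theta(1-\theta)}\,P_{\PBR}/(\sqrt{n}\,P_{\CH})\bigr)$. The $\tfrac12\log n$ from the normalization cancels the $-\tfrac12\log(n+1)$ of Thm.~\ref{t:pbr-ch} up to $-\tfrac12\log(1+1/n)=O(1/n)$, and the constant $-\tfrac12\log(2\pi\theta(1-\theta))$ combines with the $\tfrac12\log(2\pi\Htheta_n(1-\Htheta_n))$ term coming from $H_n(\Htheta_n)$. What remains is $\tfrac12\log\bigl(\Htheta_n(1-\Htheta_n)/(\theta(1-\theta))\bigr)+O(1/n)$, i.e. $F(\Htheta_n)-F(\theta)+O(1/n)$ with $F(x)=\tfrac12\log(x(1-x))$. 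Then $F'(\theta)=\tfrac{1-2\theta}{2\theta(1-\theta)}$, so $F'(\theta)^2\theta(1-\theta)=\tfrac{(1-2\theta)^2}{4\theta(1-\theta)}$, matching Eq.~\ref{e:t:anpbr-ch}, and the hypothesis $\theta\neq 1/2$ is precisely $F'(\theta)\neq 0$.

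For the exact statement I would do the same with Cor.~\ref{cor:x-ch}: the two $\tfrac12\log n$ terms cancel, and collecting the remaining logarithms shows, after routine simplification, that $-\log\bigl(\tfrac{\theta-\varphi}{1-\varphi}\sqrt{2\pi(1-\theta)/\theta}\,\sqrt{n}P_{\X}/P_{\CH}\bigr)$ equals $F(\Htheta_n)-F(\theta)+O(1/n)$ with $F(x)=\log(x-\varphi)+\tfrac12\log\bigl((1-x)/x\bigr)$. Here $F'(\theta)=\tfrac{1}{\theta-\varphi}-\tfrac{1}{2\theta(1-\theta)}=\tfrac{\theta(1-2\theta)+\varphi}{2\theta(1-\theta)(\theta-\varphi)}$, giving $F'(\theta)^2\theta(1-\theta)=\tfrac{(\theta(1-2\theta)+\varphi)^2}{4(\theta-\varphi)^2\theta(1-\theta)}$, matching Eq.~\ref{e:t:anx-ch}; the exclusion $\varphi\neq\theta(2\theta-1)$ is exactly $F'(\theta)\neq 0$. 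In both cases Lemma~\ref{ANlemma} applied to $F$, together with the convergence-in-probability-to-$0$ argument of \cite{brockwell:1991a}, Prop.~6.3.3 (as already used in Thm.~\ref{thm:gain_anorm}), finishes the proof, since the residual $O(1/n)$ scales to $O(1/\sqrt{n})$.

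The main obstacle is that the interval bounds of Thm.~\ref{t:pbr-ch} and Cor.~\ref{cor:x-ch} are stated for deterministic $\htt$, whereas we evaluate them at the random $\Htheta_n$; I must verify that the error terms, scaled by $\sqrt{n}$, tend to $0$ in probability. The delicate term is $\tfrac{\varphi(1-\varphi)}{(\Htheta_n-\varphi)^2 n}$ in Cor.~\ref{cor:x-ch} (and similarly $\tfrac{1}{12n\Htheta_n(1-\Htheta_n)}$), which would blow up should $\Htheta_n$ approach $\varphi$ or an endpoint. Since $\Htheta_n\to\theta\in(\varphi,1)$ in probability, I would restrict to the event $\{(\theta+\varphi)/2<\Htheta_n<(1+\theta)/2\}$, whose probability tends to $1$; on this event $\Htheta_n-\varphi$ is bounded below and $\Htheta_n(1-\Htheta_n)$ is bounded away from $0$, so all error terms are genuinely $O(1/n)$ uniformly, hence $O(1/\sqrt{n})$ after scaling. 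This legitimizes treating the residuals as negligible and completes the Slutsky step.
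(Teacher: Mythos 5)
Your proposal is correct and follows essentially the same route as the paper's proof: the same decomposition via Thm.~\ref{t:pbr-ch} and Cor.~\ref{cor:x-ch}, the same delta-method functions $F(x)=\tfrac12\log(x(1-x))$ and $F(x)=\log(x-\varphi)+\tfrac12\log((1-x)/x)$ fed into Lem.~\ref{ANlemma}, and the same Slutsky-type combination via Prop.~6.3.3 of \cite{brockwell:1991a}. Your explicit restriction to the event $\{(\theta+\varphi)/2<\Htheta_n<(1+\theta)/2\}$ is just a more careful rendering of the step the paper dispatches with an appeal to the law of large numbers, so it is a welcome clarification rather than a different argument.
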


\begin{proof}
  From Thm.~\ref{t:pbr-ch}, Eq.~\ref{e:t:pbr-ch} and the law of large
  numbers, we see that
  \begin{equation}
   \sqrt{n}\left( -\log\left(\frac{P_{\PBR}}{\sqrt{n}P_{\CH}}\right)-\log\left(\sqrt{2\pi\Htheta(1-\Htheta)}\right)\right)
  \end{equation}
  converges in probability to zero. From Lem.~\ref{ANlemma}
  and
  \begin{equation}
    \frac{d}{dx} \log(x(1-x))/2 = 
        \frac{1}{2x}-\frac{1}{2(1-x)} = \frac{1-2x}{2x(1-x)},
  \end{equation}
  we conclude
  \begin{equation}
   \sqrt{n}\left(\log\left(\sqrt{2\pi\Htheta(1-\Htheta)}\right)-\log\left(\sqrt{2\pi\theta(1-\theta)}\right)\right)\xrightarrow{D}
   N\left(0,\left(\frac{1-2\theta}{2\theta(1-\theta)}\right)^{2} \theta(1-\theta)\right).
  \end{equation}
  Combining the above observations gives Eq.~\ref{e:t:anpbr-ch}.

  Similarly, from Cor.~\ref{cor:x-ch} and taking note of the definition of $\lE_{n}(t|\varphi)$,
  \begin{equation}
    \sqrt{n}\left(
      -\log\left(\frac{\sqrt{n}P_{\X}}{P_{\CH}}\right) 
      -\log\left(\frac{\Htheta-\varphi}{1-\varphi}\sqrt{\frac{2\pi(1-\Htheta)}{\Htheta}}\right)
    \right)
  \end{equation}
  converges in probability to zero. The relevant derivative is 
  \begin{equation}
    \frac{d}{dx}\left(\log(x-\varphi)+\log((1-x)/x)/2\right)
     = \frac{1}{x-\varphi}-\frac{1}{2(1-x)}-\frac{1}{2x}
      = \frac{x(1-2x)+\varphi}{2(x-\varphi)x(1-x)},
  \end{equation}
  from which
  \begin{align}
   \sqrt{n}\left(\log\left(\frac{\Htheta-\varphi}{1-\varphi}\sqrt{\frac{2\pi(1-\Htheta)}{\Htheta}}\right)
   -\log\left(\frac{\theta-\varphi}{1-\varphi}\sqrt{\frac{2\pi(1-\htheta)}{\htheta}}\right)\right)
   \hspace{-2.5in}&\notag\\
   &\xrightarrow{D}
   N\left(0,\left(\frac{\theta(1-2\theta)+\varphi}{2(\theta-\varphi)\theta(1-\theta)}\right)^{2} \theta(1-\theta)\right),
  \end{align}
  and combining the two observations gives Eq.~\ref{e:t:anx-ch}.
\end{proof}

\subsection{Confidence Interval Endpoints}
\label{app:endpoints}

For the one-sided confidence intervals, we need to determine the lower
boundaries of acceptance regions, that is the confidence lower
bounds. By monotonicity of the $p$-values in $\varphi$, it suffices to
solve equations of the form $-\log(P(\htheta,\varphi))=\alpha$, where
$a=e^{-\alpha}$ is the desired significance level. Here we obtain
lower and upper bounds on the solutions $\varphi$.

To illuminate the asymptotic behavior of solutions $\varphi$ of
$-\log(P(\htheta,\varphi))=\alpha$, we reparametrize the
log-$p$-values so that our scale is set by an empirical
standard deviation, namely
$\hat\sigma=\sqrt{\htheta(1-\htheta)/n}$.  Thus we express the
solution as
\begin{equation}
  \varphi(\gamma,\htheta) = \htheta -\hat\sigma\gamma,
\end{equation}
in terms of a scaled deviation down from $\htheta$.
Inverting for $\gamma$ we get
\begin{equation}
  \gamma=\gamma(\varphi,\htheta)=\frac{\htheta-\varphi}{\hat\sigma}.
\end{equation}

\begin{theorem}\label{CHinterval}
  Let $0<\htheta<1$ and $\alpha>0$. Suppose that
  $\alpha\leq n\htheta^{2}(1-\htheta)^{2}/8$.  Then
  there is a solution $\gamma_\alpha>0$ of the identity
  $-\log(P_{\CH}(\htheta,\varphi(\gamma_\alpha,\htheta)))=\alpha$
  satisfying
  \begin{equation}
    \gamma_{\alpha} \in \sqrt{2\alpha}\left(1 + \frac{5}{2}\frac{\sqrt{\alpha}}{\sqrt{n\htheta(1-\htheta)}}[-1,1]\right)^{-1/2}.
  \end{equation}
\end{theorem}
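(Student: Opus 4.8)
The plan is to convert the defining equation into a statement about the Kullback--Leibler divergence and then analyze it by a second-order expansion. Since $-\log(P_{\CH,n}(\htheta|\varphi))=n\kl(\htheta|\varphi)$ for $\varphi\le\htheta$, and $\varphi(\gamma,\htheta)=\htheta-\hat\sigma\gamma$ with $\hat\sigma=\sqrt{\htheta(1-\htheta)/n}$, I would first rewrite the equation to be solved as $f(\gamma):=n\kl(\htheta|\htheta-\hat\sigma\gamma)=\alpha$. Writing $\delta=\hat\sigma\gamma$, the divergence becomes $\kl(\htheta|\htheta-\delta)=-\htheta\log(1-\delta/\htheta)-(1-\htheta)\log(1+\delta/(1-\htheta))$. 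By Thm.~\ref{t:pproperties}, $P_{\CH}$ is strictly decreasing in $\varphi$ for $\varphi\le\htheta$, so $f$ is continuous and strictly increasing in $\gamma$ on the range where $\varphi>0$, with $f(0)=0$. Existence and uniqueness of $\gamma_\alpha$ then follow from the intermediate value theorem (the hypothesis on $\alpha$ keeps the relevant $\varphi$ strictly positive), and to place $\gamma_\alpha$ in the claimed interval it suffices, by monotonicity, to evaluate $f$ at the two endpoints $\sqrt{2\alpha}(1\pm\tfrac52\epsilon)^{-1/2}$, where $\epsilon=\sqrt{\alpha}/\sqrt{n\htheta(1-\htheta)}$, and check that they bracket $\alpha$.

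Next I would expand $f$ in powers of $\delta$. Using $-\log(1-u)=\sum_{k\ge1}u^{k}/k$ and $-\log(1+v)=\sum_{k\ge1}(-1)^{k}v^{k}/k$ with $u=\delta/\htheta$ and $v=\delta/(1-\htheta)$, the linear terms cancel and one obtains
\begin{equation*}
  \kl(\htheta|\htheta-\delta)=\frac{\delta^{2}}{2\htheta(1-\htheta)}+\sum_{k\ge3}\frac{\delta^{k}}{k}\left(\frac{1}{\htheta^{k-1}}+\frac{(-1)^{k}}{(1-\htheta)^{k-1}}\right).
\end{equation*}
Because $\delta^{2}/(2\htheta(1-\htheta))=\hat\sigma^{2}\gamma^{2}/(2\htheta(1-\htheta))=\gamma^{2}/(2n)$, multiplying by $n$ gives $f(\gamma)=\tfrac12\gamma^{2}\left(1+c(\gamma)\right)$, where $c(\gamma)$ is the ratio of the cubic-and-higher remainder to the quadratic term. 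This already exhibits the leading behaviour $\gamma_\alpha\approx\sqrt{2\alpha}$ promised by the statement, so the entire problem reduces to bounding $|c(\gamma)|$ by $\tfrac52\epsilon$.

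The heart of the argument, and the step I expect to be the main obstacle, is controlling $c(\gamma)$ with explicit constants. I would bound the remainder termwise using $1/k\le\tfrac13$ for $k\ge3$ and sum the two resulting geometric tails, giving
\begin{equation*}
  |c(\gamma)|\le\frac{2\delta}{3}\left(\frac{1-\htheta}{\htheta\,(1-\delta/\htheta)}+\frac{\htheta}{(1-\htheta)\,(1-\delta/(1-\htheta))}\right).
\end{equation*}
The hypothesis $\alpha\le n\htheta^{2}(1-\htheta)^{2}/8$ is exactly what makes this usable: it forces $\gamma\lesssim\sqrt{n}\,\htheta(1-\htheta)$ and hence both $\delta/\htheta$ and $\delta/(1-\htheta)$ below a fixed small constant, so the two denominators $1-\delta/\htheta$ and $1-\delta/(1-\htheta)$ can be absorbed into a harmless prefactor. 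After substituting $\delta=\hat\sigma\gamma$ and simplifying the bracket via $(1-\htheta)/\htheta+\htheta/(1-\htheta)\le 1/(\htheta(1-\htheta))$, the bound collapses to $|c(\gamma)|\le K\gamma/\sqrt{n\htheta(1-\htheta)}$ for an explicit constant $K$. The only delicacy is the mild circularity that $c$ depends on $\gamma$ while $\gamma$ is what we are solving for; I would resolve it cleanly by carrying out the final estimate only at the two endpoints $\sqrt{2\alpha}(1\pm\tfrac52\epsilon)^{-1/2}$, where $\gamma$ is controlled a priori (it is at most a fixed multiple of $\sqrt{2\alpha}$ since $\epsilon$ is small). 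There one checks that $K\gamma/\sqrt{n\htheta(1-\htheta)}\le\tfrac52\epsilon$ with room to spare, so that $f$ at the smaller endpoint is at most $\alpha$ and $f$ at the larger endpoint is at least $\alpha$; monotonicity of $f$ then forces $\gamma_\alpha$ into the stated interval, and the generous constant $5/2$ reflects the deliberately loose bookkeeping.
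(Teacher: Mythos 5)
Your proposal is correct, and the constants do work out as you claim: under the hypothesis $\alpha\le n\htheta^{2}(1-\htheta)^{2}/8$ your $\epsilon=\sqrt{\alpha}/\sqrt{n\htheta(1-\htheta)}$ satisfies $\tfrac{5}{2}\epsilon\le\tfrac{5}{2}\sqrt{1/32}<0.45$, so the upper endpoint obeys $\gamma_{+}\le 1.34\sqrt{2\alpha}$, both ratios $\delta/\htheta$ and $\delta/(1-\htheta)$ stay below $0.22$, and your geometric-tail bound then gives $|c(\gamma_{\pm})|\le 1.7\,\epsilon<\tfrac{5}{2}\epsilon$ at both endpoints, which is exactly what the bracketing requires. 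Your route shares the analytic core of the paper's proof (a quadratic approximation of $\kl(\htheta|\htheta-\delta)$ with the error controlled at relative order $\sqrt{\alpha/n}$), but differs in two genuine respects. First, the remainder control: the paper Taylor-expands with a Lagrange third-order remainder, bounding $f^{(3)}$ on the range $\varphi\ge a_{1}\htheta$ with $a_{1}=3/4$, whereas you expand the two logarithms as power series and sum the $k\ge 3$ tail as geometric series. Second, and more substantively, the circularity is resolved differently: the paper works forward from the unknown solution, first invoking the simplified Hoeffding bound $P_{\CH}\le e^{-2n(\htheta-\varphi)^{2}}$ to get an a priori bound on $\gamma_{\alpha}$, then bootstrapping to $\gamma_{\alpha}\le 2\sqrt{3/5}\sqrt{\alpha}$, and finally inverting the interval relation; you work backward from the claimed interval, evaluating the monotone function $f$ at its two explicit endpoints and trapping a solution by the intermediate value theorem. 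Your bracketing needs no auxiliary inequality at all and avoids the slightly delicate step of substituting a bound on $\gamma$ into an interval relation that $\gamma$ itself satisfies. What the paper's structure buys in exchange is reusability and transparency: the intermediate facts $\varphi\ge a_{1}\htheta$ and $\gamma_{\alpha}\le 2\sqrt{3/5}\sqrt{\alpha}$ are cited again in the proof of Thm.~\ref{thm:exact_endpt}, and the derivation exhibits where the constant $5/2$ comes from (as a simplification of $4^{4}/(3^{3}\sqrt{15})\approx 2.45$) rather than verifying it a posteriori.
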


The constants in this theorem and elsewhere are chosen for convenience,
not for optimality; better constants can be extracted from the proofs.
Note that the upper bound on $\alpha$ ensures that the reciprocal
square root is bounded away from zero. However, for the relative error to
go to zero as $n$ grows requires $\alpha=o(n)$.

\begin{proof}
  Consider the parametrized bound $\alpha\leq
  2n\htheta^{2}(1-\htheta)^{2}(1-a_{1})^{2}$, where later we set
  $a_{1}=3/4$ to match the theorem statement.  Let
  $F(\gamma)=-\log(P_{\CH}(\htheta,\varphi(\gamma,\htheta)))$.
  $F$ is continuous and monotone increasing.  A standard
  simplification of the Chernoff-Hoeffding bound noted in
  Ref.~\cite{Hoeffding} is
  \begin{equation}
    P_{\CH}\leq  e^{-2n(\htheta-\varphi)^{2}}=e^{-2\htheta(1-\htheta)\gamma^{2}}.\label{e:sch}
  \end{equation}  
  For $\varphi=\varphi(\gamma_\alpha,\htheta)$ solving the desired
  equation, we have $(\htheta-\varphi)\leq \sqrt{\alpha/(2n)}$ (by
  monotonicity), which in turn is bounded above according to
  $\sqrt{\alpha/2n}\leq
  \htheta(1-\htheta)(1-a_{1})\leq\htheta(1-a_{1})$, according to our
  assumed bound.  We conclude that $\varphi\geq a_{1}\htheta$.  For the
  solution $\gamma_\alpha$, we get $\gamma_{\alpha}\leq
  \sqrt{\alpha/(2\htheta(1-\htheta))}\leq
  \sqrt{n\htheta(1-\htheta)}(1-a_{1})$.
  
  We now Taylor expand $\kl(\htheta|\varphi)$ with remainder at
  third order around $\varphi=\htheta$. Write
  $f(x)=\kl(\htheta|\htheta-x)$, where we can restrict $x$ according
  to $\htheta\geq \htheta-x=\varphi\geq a_{1}\htheta$.  The derivatives of $f$ can
  be written explicitly as follows:
  \begin{equation}
    f^{(k)}(x) = 
    (k-1)!\frac{\htheta}{(\htheta-x)^{k}}
    - (-1)^{k-1}(k-1)!\frac{1-\htheta}{(1-\htheta+x)^{k}}.
  \end{equation}
  We have 
  \begin{align}
    f^{(1)}(0)&=0,\notag\\
    f^{(2)}(0)&= \frac{1}{\htheta}+\frac{1}{1-\htheta}=
    \frac{1}{\htheta(1-\htheta)},\notag\\
    f^{(3)}(x)&= 2\frac{\htheta}{(\htheta-x)^{3}} -2\frac{1-\htheta}{(1-\htheta+x)^{3}},\notag\\
    f^{(3)}(x) &\leq 2\frac{\htheta}{(\htheta-x)^{3}}
    \leq 2\frac{\htheta}{a_{1}^{3}\htheta^{3}}= 2\frac{1}{a_{1}^{3}\htheta^{2}},\notag\\
    f^{(3)}(x) &\geq -2\frac{1-\htheta}{(1-\htheta+x)^{3}}
    \geq  -2\frac{1-\htheta}{(1-\htheta)^{3}}
    = -2\frac{1}{(1-\htheta)^{2}},
  \end{align}
  since $0<a_{1}<1$. We use the bounds on $f^{(3)}(x)$ to bound the
  remainder in the Taylor expansion, where, to get cleaner
  expressions, we can decrease $\htheta$ and $1-\htheta$ to
  $\htheta(1-\htheta)$ in the denominators.
  \begin{equation}
    \kl(\htheta|\htheta-x)
      \in \frac{x^{2}}{2\htheta(1-\htheta)}
          + \frac{x^{3}}{3(\htheta(1-\htheta))^{2}}\left[-1,
             \frac{1}{a_{1}^{3}}\right].
  \end{equation}
  Substituting $x=\gamma_{\alpha}\sqrt{\htheta(1-\htheta)/n}$ gives
  \begin{equation}
    \alpha = -\log(P_{\CH}(\htheta,\varphi(\gamma_{\alpha},\htheta))) =
    n\kl(\htheta|\htheta-x)
    \in \frac{\gamma_{\alpha}^{2}}{2}\left(1 
    + \frac{2\gamma_{\alpha}}{3\sqrt{n\htheta(1-\htheta)}}\left[-1,
    \frac{1}{a_{1}^{3}}\right]\right).\label{e:t:chinterval:firstbnd}
  \end{equation}
  For $\htheta\leq 1/2$, $f^{(4)}(x)$ and $f^{(3)}(0)$ are
  non-negative, so we could have taken the lower bound in the interval
  to be zero for $\theta\leq 1/2$. For the theorem, we prefer not to
  separate the cases.

  We substitute the bound $\gamma\leq
  \sqrt{n\htheta(1-\htheta)}(1-a_{1})$ for the $\gamma$ multiplying
  the interval in Eq.~\ref{e:t:chinterval:firstbnd} and use the lower
  bound in the interval for the inequality
  \begin{equation}
     \alpha \geq \frac{\gamma^{2}}{2}\left(1-\frac{2(1-a_{1})}{3}\right).
  \end{equation}
  For the theorem, we have $a_{1}=3/4$, so $1-2(1-a_{1})/3=
  5/6$. Inverting the inequality for $\gamma$ gives
  $\gamma\leq 2\sqrt{3/5}\sqrt{\alpha}$.
  Now substituting this bound on $\gamma$ for the $\gamma$ multiplying
  the interval in Eq.~\ref{e:t:chinterval:firstbnd} gives
  \begin{equation}
    \alpha \in \frac{\gamma^{2}}{2}\left(1 +
       \frac{4}{\sqrt{15}}
        \frac{\sqrt{\alpha}}{\sqrt{n\htheta(1-\htheta)}}\left[-1,\frac{4^{3}}{3^{3}}\right]\right).
  \end{equation}
  By monotonicity of the appropriate operations, 
  \begin{equation}
    \gamma \in \sqrt{2\alpha}\left(1 + \frac{4}{\sqrt{15}}
      \frac{\sqrt{\alpha}}{\sqrt{n\htheta(1-\htheta)}}\left[-1,\frac{4^{3}}{3^{3}}\right]\right)^{-1/2}.
  \end{equation}
  For the theorem statement, we simplify the bounds with $1\leq
  4^{3}/3^{3}$ and $4^{4}/(3^{3}\sqrt{15})\leq 5/2$.  \MKc{Permanent
    comment to check the second inequality ``by hand'': Write it as $2^9 \leq
    3^3\times 5\sqrt{15}$, which expands to $512 \leq 135\sqrt{15}$. Write
    this as $512 \leq (128+7)\sqrt{16-1}$. Equivalently $1\leq
    (1+7/128)\sqrt{1-1/16}$.  We have $(1+7/128) \geq \sqrt{1+7/64}$
    (from the general inequality $(1+x/2)\geq \sqrt{1+x}$). Thus the
    right-hand side of the inequality is at least
    $\sqrt{(1+7/64)(1-4/64)}=\sqrt{1+3/64-28/(64^2)} \geq 1$,
    verifying the inequality.  }
\end{proof}

\begin{theorem}\label{thm:PBRinterval}
  Let $0<\htheta<1$ and $\alpha>0$. 
  Define $\Delta=\log(n+1)/2-H_{n}(\htheta)$. Suppose that
  $\alpha+\Delta\leq n\htheta^{2}(1-\htheta)^{2}/8$.
  Then there is a solution $\gamma_\alpha>0$ of the identity
  $-\log(P_{\PBR}(\htheta,\varphi(\gamma_\alpha,\htheta)))=\alpha$
  satisfying
  \begin{equation*}
    \gamma_\alpha
    \in 
    \sqrt{2(\alpha+\Delta)}\left(1+\frac{5}{2}\frac{\sqrt{\alpha+\Delta}}{\sqrt{n\htheta(1-\htheta)}}[-1,1]\right)^{-1/2}.
  \end{equation*}
\end{theorem}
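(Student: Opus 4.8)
The plan is to reduce the statement directly to Theorem~\ref{CHinterval} using the \emph{exact} identity between the two $\log(p)$-values furnished by Theorem~\ref{t:pbr-ch}. Equation~\ref{e:t:pbr-ch-hn} states
\[
  -\log(P_{\PBR,n}(\htheta|\varphi)) = -\log(P_{\CH,n}(\htheta|\varphi)) - \tfrac{1}{2}\log(n+1) + H_{n}(\htheta),
\]
and since $\Delta = \tfrac{1}{2}\log(n+1) - H_{n}(\htheta)$ by definition, this is precisely
\[
  -\log(P_{\PBR,n}(\htheta|\varphi)) = -\log(P_{\CH,n}(\htheta|\varphi)) - \Delta .
\]
Note this is a genuine equality, not an interval inclusion; the interval bounds in Theorem~\ref{t:pbr-ch} only enter when one substitutes bounds for $H_n$, which I do not need. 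Consequently the defining equation $-\log(P_{\PBR}(\htheta|\varphi(\gamma_\alpha,\htheta)))=\alpha$ is \emph{identical} to the equation $-\log(P_{\CH}(\htheta|\varphi(\gamma_\alpha,\htheta)))=\alpha+\Delta$. In words: the $\PBR$ endpoint at level exponent $\alpha$ is exactly the $\CH$ endpoint at level exponent $\alpha+\Delta$, realized by the very same value of $\varphi$, hence the very same $\gamma_\alpha$.

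Before invoking Theorem~\ref{CHinterval} with its ``$\alpha$'' replaced by $\alpha+\Delta$, I would check its two hypotheses under this substitution. The growth condition $\alpha+\Delta \le n\htheta^{2}(1-\htheta)^{2}/8$ is exactly the assumption made here, so nothing is required there. The one point needing care is that Theorem~\ref{CHinterval} is stated for a strictly positive level exponent, so I must confirm $\alpha+\Delta>0$. Since $\alpha>0$ it suffices to show $\Delta\ge 0$, and this is immediate from the ordering $P_{\CH}\le P_{\PBR}$ established in Theorem~\ref{t:pproperties}: that ordering reads $-\log(P_{\PBR})\le-\log(P_{\CH})$, i.e.\ $-\Delta\le 0$. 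Equivalently, $e^{\Delta}=\htheta^{n\htheta}(1-\htheta)^{n(1-\htheta)}(n+1)\binom{n}{n\htheta}\ge 1$ because the central binomial probability is the largest of the $n+1$ terms summing to $1$.

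With both hypotheses verified, Theorem~\ref{CHinterval} (applied with $\alpha+\Delta$ in place of $\alpha$) supplies a positive solution $\gamma_\alpha$ of $-\log(P_{\CH}(\htheta|\varphi(\gamma_\alpha,\htheta)))=\alpha+\Delta$, hence of the $\PBR$ equation, satisfying
\[
  \gamma_\alpha \in \sqrt{2(\alpha+\Delta)}\left(1 + \frac{5}{2}\frac{\sqrt{\alpha+\Delta}}{\sqrt{n\htheta(1-\htheta)}}[-1,1]\right)^{-1/2},
\]
which is precisely the claimed bound. Because the argument is a pure reduction, there is no real analytic obstacle: all the substantive work — the third-order Taylor expansion of $\kl(\htheta|\varphi)$ around $\varphi=\htheta$, the a priori control $\varphi\ge a_1\htheta$, and the inversion for $\gamma$ — has already been carried out inside Theorem~\ref{CHinterval}. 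The only step one must not overlook is the positivity check $\alpha+\Delta>0$, which rests on the previously established $p$-value ordering rather than on the growth hypothesis.
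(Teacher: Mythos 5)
Your proof is correct and is essentially the paper's own argument: both reduce the $\PBR$ endpoint equation to the $\CH$ endpoint equation at level exponent $\alpha+\Delta$ via the exact identity $-\log(P_{\PBR})=-\log(P_{\CH})-\Delta$ from Thm.~\ref{t:pbr-ch}, then invoke Thm.~\ref{CHinterval}. Your explicit verification that $\alpha+\Delta>0$ (via $\Delta\ge 0$, equivalently $P_{\CH}\le P_{\PBR}$) is a hypothesis check the paper leaves implicit, but it does not change the approach.
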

\begin{proof}
  By Thm.~\ref{t:pbr-ch}, $-\log(P_{\CH})- (-\log(P_{\PBR})) =
  \Delta$.  If we define $\tilde{\alpha}=\alpha+\Delta$, then solving
  $-\log(P_{\PBR})=\alpha$ is equivalent to solving
  $-\log(P_{\CH})=\tilde{\alpha}$. Since $\Delta$ depends only on $n$
  and $\htheta$, $\tilde{\alpha}$ does not depend on $\gamma$. We
  can therefore apply Thm.~\ref{CHinterval} to get the desired
  bounds.
\end{proof}

\begin{theorem}\label{thm:exact_endpt}
  For $x\geq 0$, let $q(x) = -\log(e^{-x^{2}/2}Y(x)/\sqrt{2\pi}) =
  x^{2}/2+\log(2\pi)/2-\log(Y(x))$. Suppose that
  $0<\htheta<1$, and $\log(2)<\alpha\leq n\htheta^{2}(1-\htheta)^{2}/8$.
  Then there is a solution $\gamma_{\alpha}$ of the identity
  $-\log(P_{\X}(\htheta,\varphi(\gamma_{\alpha},\htheta))=\alpha$ satisfying
  \begin{align}
    \gamma_{\alpha} &\in \max\left(0,q^{-1}\left(\alpha\left(1+\frac{64\sqrt{\alpha}/(15\sqrt{15})}{\sqrt{n\htheta(1-\htheta)}}[-1,1]\right)+\frac{\sqrt{\pi/6}+8\sqrt{\alpha}/\sqrt{15}}{\sqrt{n\htheta(1-\htheta)}}[-1,1]\right)\right)\notag\\
    &\hphantom{\in\;\;}\times\left(1+\frac{2\sqrt{\alpha}/\sqrt{5}}{\sqrt{n\htheta(1-\htheta)}}[-1,1]\right),\label{e:t:exact_endpt}
  \end{align}
  where we extend $q^{-1}$ to negative values by $q^{-1}(y)=-\infty$ for
  $y\leq 0$ (if necessary) when evaluating this interval expression.
\end{theorem}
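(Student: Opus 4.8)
The plan is to reduce the defining equation $-\log(P_{\X}(\htheta,\varphi(\gamma,\htheta)))=\alpha$ to an equation of the form $q(w)=\alpha+(\text{small corrections})$ using the $Y$-function representation in Thm.~\ref{t:x-pbr}, and then invert $q$. Here $w=\sqrt{n/(\varphi(1-\varphi))}\,(\htheta-\varphi)$ is exactly the argument of $Y$ in Eq.~\ref{e:t:x-ch}, and it is related to the target variable by $\gamma=w\sqrt{\varphi(1-\varphi)/(\htheta(1-\htheta))}$. First I would settle existence: with $F(\gamma)=-\log(P_{\X}(\htheta,\varphi(\gamma,\htheta)))$, the strict monotonicity of $P_{\X}$ in $\varphi$ from Thm.~\ref{t:pproperties} makes $F$ continuous and strictly increasing, and $F(0)=-\log(\Prob(S_n\ge n\htheta))\le\log(2)$ by the binomial median bound. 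Since $\alpha>\log(2)$ is assumed, a solution $\gamma_\alpha>0$ exists, which explains that hypothesis. The comparison $P_{\X}\le P_{\CH}$ together with Thm.~\ref{CHinterval} then gives the a priori bound $\gamma_\alpha\le\gamma^{\CH}_\alpha\le 2\sqrt{3/5}\,\sqrt{\alpha}=O(\sqrt\alpha)$, which is what allows the correction terms to be bounded.

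The main algebraic step is to substitute the identity $-\log(Y(w))=q(w)-w^2/2-\tfrac12\log(2\pi)$ (the definition of $q$ in the statement) into Eq.~\ref{e:t:x-ch}, noting that the explicit $+\tfrac12\log(n)$ there cancels the $-\tfrac12\log(n)$ contained in $-\log(\sqrt n\,Y(w))$ and that the $2\pi$ factors also cancel. Using $-\log(P_{\CH,n}(\htheta|\varphi))=n\kl(\htheta|\varphi)$, the equation becomes
\[
  \alpha=q(w)+\Bigl(n\kl(\htheta|\varphi)-\tfrac{w^2}{2}\Bigr)-\tfrac12\log\!\Bigl(\tfrac{\htheta(1-\varphi)}{(1-\htheta)\varphi}\Bigr)+\varepsilon,
\]
where $\varepsilon\in\bigl[-\lE_n(\htheta|\varphi)/(n(\htheta-\varphi)),\,1/(12n\htheta(1-\htheta))\bigr]$ is the interval error from Thm.~\ref{t:x-pbr}. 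The point of expanding $Y$ through $q$ is that the leading $w^2/2$ is designed to cancel the quadratic part of $n\kl$.

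I would then bound the three corrections on the common scale $1/\sqrt{n\htheta(1-\htheta)}$, writing $\delta=\htheta-\varphi$. A Taylor expansion of $\kl(\varphi+\delta|\varphi)$ in $\delta$ has second-order term $\delta^2/(2\varphi(1-\varphi))$, which equals $w^2/(2n)$ exactly, so $n\kl(\htheta|\varphi)-w^2/2=O(n\delta^3)=O(\gamma^3/\sqrt n)=O(\alpha^{3/2}/\sqrt n)$; this is the relative correction that becomes the factor $\alpha(1+\tfrac{64\sqrt\alpha/(15\sqrt{15})}{\sqrt{n\htheta(1-\htheta)}}[-1,1])$ inside $q^{-1}$. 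The logarithmic term expands to $O(\delta)=O(\sqrt\alpha/\sqrt n)$ and supplies the additive $8\sqrt\alpha/\sqrt{15}$ piece, while $\lE_n/(n\delta)\le\sqrt{\pi/(8n\varphi(1-\varphi))}$, bounded using $\varphi\ge a_1\htheta$, supplies the $\alpha$-independent $\sqrt{\pi/6}$ piece. Since $q'(x)=x-Y'(x)/Y(x)=1/Y(x)>0$ by the differential equation $Y'=xY-1$ in Eq.~\ref{e:defY}, $q$ is strictly increasing, so I can apply the monotone $q^{-1}$ (extended by $-\infty$ on nonpositive arguments, with an outer $\max(0,\cdot)$ for the small-$\alpha$ regime) to solve for $w$. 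Converting $w$ to $\gamma$ via $\gamma=w\sqrt{\varphi(1-\varphi)/(\htheta(1-\htheta))}$ and expanding the square root in $\delta$ produces the outer factor $(1+\tfrac{2\sqrt\alpha/\sqrt5}{\sqrt{n\htheta(1-\htheta)}}[-1,1])$, which completes Eq.~\ref{e:t:exact_endpt}.

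The main obstacle is the bookkeeping in the third step: three heterogeneous error sources must be converted to the single scale $1/\sqrt{n\htheta(1-\htheta)}$ and split cleanly into a relative part that multiplies $\alpha$ and an absolute part, with the explicit constants of the statement. This is only possible because the a priori bound $\gamma_\alpha=O(\sqrt\alpha)$ from the first paragraph converts every $O(\gamma/\sqrt n)$ into an $O(\sqrt\alpha/\sqrt n)$, and because the hypothesis $\alpha\le n\htheta^2(1-\htheta)^2/8$ keeps $\varphi$ bounded away from $0$ (so the third derivative of $\kl$ and the factor $1/(\varphi(1-\varphi))$ stay controlled) and keeps the relative errors from overwhelming the leading term.
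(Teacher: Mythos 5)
Your proposal is correct and follows essentially the same route as the paper's proof: your $w$ is exactly the paper's $\tilde\gamma$, and the key steps — the a priori bound $\gamma_\alpha\leq 2\sqrt{3/5}\sqrt{\alpha}$ via $P_{\X}\leq P_{\CH}$ and Thm.~\ref{CHinterval}, the cancellation of $w^2/2$ against the quadratic Taylor term of $n\kl(\htheta|\varphi)$ so that the remaining $Y$-terms assemble into $q(w)$, the bounding of the cubic, logarithmic, and $\lE_n$ error sources on the scale $1/\sqrt{n\htheta(1-\htheta)}$ using $\varphi\geq 3\htheta/4$, inversion of the monotone $q$, and the final conversion factor $\sqrt{\varphi(1-\varphi)/(\htheta(1-\htheta))}$ — all coincide with the paper's argument, with the constants attributed to the correct sources. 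Your explicit existence argument via the binomial median bound is a small addition the paper leaves implicit, but it does not change the approach.
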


The function $q(x)$ is the negative logarithm of the
$Q$-function, which is the tail of the standard normal distribution.
The lower bound on $\alpha$ in Thm.~\ref{thm:exact_endpt} ensures that
there is a solution with $\gamma_{\alpha}>0$, because
$q(0)=\log(2)$. For reference, the constants multiplying the interval
expressions are $64/(15\sqrt{15})\approx 1.102$, $8/\sqrt{15}\approx
2.066$, $\sqrt{\pi/6}\approx 0.724$, $2/\sqrt{5}\approx 0.894$.
Note that in the large $n$ limit, where the $O(1/\sqrt{n})$ terms
  are negligible, the value of $\gamma_{\alpha}$ in
  Thm.~\ref{thm:exact_endpt} corresponds to the
  $(1-e^{-\alpha})$-quantile of the standard normal.

By monotonicity of $q^{-1}$, the explicit bounds in
Eq.~\ref{e:t:exact_endpt} are obtained by combining the lower or
the upper bounds in intervals in the expression. We remark that
$q^{-1}$ behaves well with respect to relative error for $\alpha$
large enough because of the inequalities
\begin{align}
  q^{-1}(y)/(1+q^{-1}(y)^{2})&\leq\frac{d}{dy} q^{-1}(y) \leq 1/q^{-1}(y), \notag\\
  q^{-1}(y)^{2}&\geq y-q(1)+1,&& \textrm{for $y\geq q(1)\approx1.841$},\notag\\
  q^{-1}(y)^{2}&\leq 2(y-\log(2)),&& \textrm{for $y\geq q(0)=\log(2)$},
  \label{e:q-1_bounds}
\end{align}
which we now establish. By implicit differentiation and from the
properties of $Y$ noted after Eq.~\ref{e:defY}, $\frac{d}{dy}
q^{-1}(y)|_{y=q(x)} = Y(x) \in [x/(1+x^{2}),1/x]$.  Therefore
$q^{-1}(y)/(1+q^{-1}(y)^{2})\leq\frac{d}{dy} q^{-1}(y)\leq
1/q^{-1}(y)$.  For $y\geq\log(2)$, we can integrate $\frac{d}{dz}
q^{-1}(z)^{2}=2q^{-1}(z)\frac{d}{dz} q^{-1}(z)\leq 2$ from $z=\log(2)$
to $y$ to show that
$q^{-1}(y)^{2}=q^{-1}(y)^{2}-q^{-1}(\log(2))^{2}\leq 2(y-\log(2))$,
making use of the identity $q^{-1}(\log(2))=0$.  Consider $y,z\geq
q(1)$. Since $q^{-1}(z)$ and $0\leq x\mapsto x^{2}/(1+x^{2})$ are
monotone increasing, $q^{-1}(z)^{2}/(1+q^{-1}(z)^{2}) \geq
q^{-1}(q(1))^{2}/(1+q^{-1}(q(1))^{2}) =1/2$, so the integral of
$\frac{d}{dz} q^{-1}(z)^{2}$ from $z=q(1)$ to $y$ with the lower bound
on $\frac{d}{dz} q^{-1}(z)$ gives $q^{-1}(y)^2-q^{-1}(q(1))^{2}=
q^{-1}(y)^{2}-1\geq y-q(1)$.

From the inequality $\frac{d}{dy} q^{-1}(y)\leq 1/q^{-1}(y)$ in
Eq.~\ref{e:q-1_bounds}, integration and monotonicity, for $0\leq z\leq
\delta$,
\begin{align}
  q^{-1}(\alpha-z)&\geq q^{-1}(\alpha)-\frac{z}{q^{-1}(\alpha-\delta)}
  \leq q^{-1}(\alpha)\left(1-\frac{z}{q^{-1}(\alpha-\delta)^{2}}\right),\notag\\
  q^{-1}(\alpha+z)&\leq q^{-1}(\alpha)+\frac{z}{q^{-1}(\alpha-\delta)}
  \geq q^{-1}(\alpha)\left(1+\frac{z}{q^{-1}(\alpha-\delta)^{2}}\right).
\end{align}
To determine the relative error, write $\delta'=\delta/\alpha$ to
obtain the interval inclusion
\begin{equation}
  q^{-1}(\alpha(1+\delta'[-1,1]))\subseteq q^{-1}(\alpha)\left(1+\frac{\alpha\delta'}{q^{-1}(\alpha(1-\delta'))^{2}}[-1,1]\right).
\end{equation}
For $\alpha(1-\delta')>q(1)$, the interval relationship can be
weakened to
\begin{equation}
q^{-1}(\alpha(1+\delta'[-1,1]))\subseteq q^{-1}(\alpha)\left(1+\frac{\alpha\delta'}{\alpha(1-\delta')-q(1)+1}[-1,1]\right).\label{eq:q-1relerr}
\end{equation}
The relative error on the right-hand side is given by the term
multiplying the interval, and can be written as
$\alpha\delta'/(\alpha-(\alpha\delta'+q(1)-1))$. If
$\alpha\delta'+q(1)-1\leq \alpha/2$, then the relative error is
bounded by $2\delta'$ which is twice the relative error of
$\alpha$. Of course, for the interval bounds to converge, we need
$\alpha=o(n)$.

\begin{proof}
  As in the proof of Thm.~\ref{CHinterval}, consider the parametrized
  bound $\alpha\leq 2n\htheta^{2}(1-\htheta)^{2}(1-a_{1})^{2}$, where
  later we set $a_{1}=3/4$ to match the statement of
  Thm.~\ref{thm:exact_endpt}. From the
  Chernoff-Hoeffding bound, we get $\varphi\geq a_{1}\htheta$ and
  $\gamma_\alpha\leq \sqrt{\alpha/(2\htheta(1-\htheta))}\leq
  \sqrt{n\htheta(1-\htheta)}(1-a_{1})$.

  Define $\tilde\gamma=(\htheta-\varphi)/\sqrt{\varphi(1-\varphi)/n}$.
  We start from Eq.~\ref{e:t:x-ch}, rewritten as follows:
  \begin{align}
    -\log(P_{\X}) &\in n\kl(\htheta|\varphi)+
    \frac{1}{2}\log(2\pi)-\log Y\left(\tilde\gamma\right)
    -\frac{1}{2}\log\left({\frac{\htheta(1-\varphi)}{(1-\htheta)\varphi}}\right)
    \notag\\
    &\hphantom{{}={}\;\;} +
    \left[-\frac{\lE_{n}(\htheta|\varphi)}{n(\htheta-\varphi)},\frac{1}{12n\htheta(1-\htheta)}\right].\label{e:t:pxinkl}
  \end{align}
  If $\tilde\gamma\geq \sqrt{8/\pi}\approx 1.6$,
  $\lE_{n}(\htheta|\varphi)=1$.  For better bounds at small values
    of $\tilde\gamma$, we use the other alternative in the definition
  of $\lE_{n}$, according to which the lower bound in the last
  interval of Eq.~\ref{e:t:pxinkl} is
  \begin{equation}
    -\frac{\lE_{n}(\htheta|\varphi)}{n(\htheta-\varphi)}
    \geq -\frac{\sqrt{\pi/8}}{\sqrt{n\varphi(1-\varphi)}}
    \geq -\frac{\sqrt{\pi/8}}{\sqrt{n a_{1}\htheta(1-\varphi)}}
    \geq -\frac{\sqrt{\pi/8}}{\sqrt{n a_{1}\htheta(1-\htheta)}}.
  \end{equation}
  
  Next we approximate
  $n\kl(\htheta|\varphi)$ in terms of $\tilde\gamma$ instead of
  $\gamma$. We still write the interval bounds in terms of $\gamma$.
  Let $f(x)=\kl(\varphi+x|\varphi)$.  We are concerned with the range
  $0\leq x \leq \htheta-\varphi$, with $\varphi\geq a_{1}\htheta$. We
  have
  \begin{align}
    f^{(1)}(x) &= \log((\varphi+x)/\varphi)-\log((1-\varphi-x)/(1-\varphi)) \notag\\
    f^{(2)}(x) &= \frac{1}{\varphi+x} + \frac{1}{1-\varphi-x}\notag\\
    &= \frac{1}{(\varphi+x)(1-\varphi-x)}\notag\\
    f^{(3)}(x) &= -\frac{1}{(\varphi+x)^{2}} + \frac{1}{(1-\varphi-x)^{2}}\notag\\
    &= -\frac{1-2(\varphi+x)}{(\varphi+x)^{2}(1-\varphi-x)^{2}}\notag\\
    |f^{(3)}(x)| &\leq  \frac{1}{a_{1}^{2}\htheta^{2}(1-\htheta)^{2}},
  \end{align}
  yielding
  \begin{equation}
    \kl(\varphi+x|\varphi) \in \frac{x^{2}}{2\varphi(1-\varphi)}
    + \frac{x^{3}}{6a_{1}^{2}\htheta^{2}(1-\htheta)^{2}}[-1,1] ,
  \end{equation}
  and with $x=\tilde\gamma\sqrt{\varphi(1-\varphi)/n}=\gamma\sqrt{\htheta(1-\htheta)/n}$,
  \begin{equation}
    n\kl(\htheta|\varphi) \in \frac{\tilde\gamma^{2}}{2}
    + \frac{\gamma^{3}}{6a_{1}^{2}\sqrt{n\htheta(1-\htheta)}}[-1,1].
  \end{equation}
  For the fourth term on the right-hand side of Eq.~\ref{e:t:pxinkl},
  \begin{equation}
    \frac{d}{dx}\log\left(\frac{\htheta(1-\htheta+x)}{(1-\htheta)(\htheta-x)}\right)
    =\frac{1}{1-\htheta+x}+\frac{1}{\htheta-x}=\frac{1}{(1-\htheta+x)(\htheta-x)},
  \end{equation}
  whose
  absolute value is bounded by $1/(a_{1}\htheta(1-\htheta))$ for $x$ in the 
  given range.  Thus
  \begin{equation}
    \log\left(\frac{\htheta(1-\varphi)}{(1-\htheta)\varphi}\right)
    \in \frac{\gamma}{a_{1}\sqrt{n\htheta(1-\htheta)}}[-1,1].
  \end{equation}

  Since $P_{\X}\leq P_{\CH}$, we can also use
  the bound $\gamma\leq 2\sqrt{3/5}\sqrt{\alpha}$ obtained in the proof of
  Thm.~\ref{CHinterval}. Substituting $a_{1}=3/4$ as needed, the
  equation to solve is now
  \begin{align}
    \alpha &\in \frac{\tilde\gamma^{2}}{2}+
    \frac{1}{2}\log(2\pi)-\log Y\left(\tilde\gamma\right)
    \notag\\
    &\hphantom{{}={}\;\;} +
    \frac{8}{\sqrt{15}}\frac{\sqrt{\alpha}}{\sqrt{n\htheta(1-\htheta)}}[-1,1]
    + 
    \frac{64}{15\sqrt{15}}\frac{\sqrt{\alpha}^{3}}{\sqrt{n\htheta(1-\htheta)}}[-1,1] \notag\\
    &\hphantom{{}={}\;\;}+
    \left[-\frac{\sqrt{\pi/6}}{\sqrt{n\htheta(1-\htheta)}},\frac{1}{12n\htheta(1-\htheta)}\right].
  \end{align}
  The sum of the first three terms evaluates to $q(\tilde\gamma)$.
  The remaining terms are now independent of $\gamma$ and are of order
  $1/\sqrt{n}$. They can be merged by means of common bounds using
  $2n\htheta(1-\htheta)\geq\sqrt{n\htheta(1-\htheta)}$, since
  $n\htheta(1-\htheta)\geq 1/2$ for our standing assumptions that
  $n\geq 1$ and $\htheta n$ is an integer different from $0$ and $n$.
  Consequently, $12n\htheta(1-\htheta)\geq 6\sqrt{n\htheta(1-\htheta)}
  \geq\sqrt{6/\pi}\sqrt{n\htheta(1-\htheta)}$.  The interval bounds
  then combine conservatively to
  \begin{equation}
    \frac{\sqrt{\pi/6}+8\sqrt{\alpha}/\sqrt{15}+64\sqrt{\alpha}^{3}/(15\sqrt{15})}{\sqrt{n\htheta(1-\htheta)}}.
  \end{equation}
  We can now write
  \begin{equation}
    \alpha \in q(\tilde\gamma)
    +
    \frac{\sqrt{\pi/6}+8\sqrt{\alpha}/\sqrt{15}+64\sqrt{\alpha}^{3}/(15\sqrt{15})}{\sqrt{n\htheta(1-\htheta)}}[-1,1],
  \end{equation}
  which holds iff
  \begin{equation}
   q(\tilde\gamma) \in \alpha\left(1+\frac{64\sqrt{\alpha}/(15\sqrt{15})}{\sqrt{n\htheta(1-\htheta)}}[-1,1]\right)+\frac{\sqrt{\pi/6}+8\sqrt{\alpha}/\sqrt{15}}{\sqrt{n\htheta(1-\htheta)}}[-1,1].
  \end{equation}
  By monotonicity of $q$ and extending $q^{-1}$ to negative arguments
  as mentioned in the statement of Thm.~\ref{thm:exact_endpt} if
  necessary, the constraint is equivalent to
  \begin{equation}
    \tilde\gamma\in q^{-1}\left(\alpha\left(1+\frac{64\sqrt{\alpha}/(15\sqrt{15})}{\sqrt{n\htheta(1-\htheta)}}[-1,1]\right)+\frac{\sqrt{\pi/6}+8\sqrt{\alpha}/\sqrt{15}}{\sqrt{n\htheta(1-\htheta)}}[-1,1]\right).\label{e:t:tildegamma}
  \end{equation}
  For $\alpha>\log(2)$, we know that $\tilde\gamma>0$, so we can add 
  $\max(0,\ldots)$ as in the theorem statement. 

  To determine the interval equation for $\gamma$, we have
  $\gamma=\tilde\gamma\sqrt{\varphi(1-\varphi)/(\htheta(1-\htheta))}$.
  We use the first-order remainder to bound the factor on the right-hand side.
  For this consider the numerator, and write $g(x)=
  \sqrt{(\htheta-x)(1-\htheta+x)}$ with $0\leq x\leq \htheta-\varphi$.
  We have
  \begin{align}
    g^{(1)}(x)
    &= \frac{2(\htheta-x)-1}{2\sqrt{(\htheta-x)(1-\htheta+x)}},\\
    |g^{(1)}(x)| &\leq\frac{1}{2\sqrt{a_{1}\htheta(1-\htheta)}}\notag\\
    &= \frac{1}{\sqrt{3\htheta(1-\htheta)}},\\
    g(x)&\in
    \sqrt{\htheta(1-\htheta)}+\frac{x}{\sqrt{3\htheta(1-\htheta)}}[-1,1].
  \end{align}
  With $x=\gamma\sqrt{\htheta(1-\htheta)/n}$
  and the bound of $\gamma\leq2\sqrt{3/5}\sqrt{\alpha}$, we get
  \begin{equation}
    \gamma\in\tilde\gamma\left(1+
       \frac{2\sqrt{\alpha}/\sqrt{5}}{\sqrt{n\htheta(1-\htheta)}}[-1,1]\right).
  \end{equation}
  The theorem follows by composing
  this constraint with Eq.~\ref{e:t:tildegamma}.
\end{proof}


\begin{acknowledgments}
  This work includes contributions of the National
  Institute of Standards and Technology, which are not subject to
  U.S. copyright. Y. Z. would like to acknowledge supports through 
  the Ontario Research Fund (ORF), the Natural Sciences and Engineering 
  Research Council of Canada (NSERC), and Industry Canada. 
\end{acknowledgments}

\bibliographystyle{plain}
\bibliography{paper}

\end{document}
